
\documentclass[a4paper,11pt]{article}
\usepackage{amsmath, amssymb, latexsym, amsthm, verbatim}
\usepackage[all]{xy}
\input{xy}

\hyphenation{mon-oid mon-oids} \hyphenation{ver-ti-ces}
\hyphenation{imm-er-sion imm-er-sions} \hyphenation{group-oid
group-oids} \hyphenation{in-duc-tive}




\long\def\forget#1\forgotten{}

\newcommand{\wzero}{\cup \{0\}}

\theoremstyle{plain} 
\newtheorem{thm}{Theorem}
\newtheorem{prop}[thm]{Proposition}

\newtheorem{lemma}[thm]{Lemma}
\newtheorem{cor}[thm]{Corollary}

\newtheorem{defn}[thm]{Definition}

\theoremstyle{definition}

\newtheorem{nota}[thm]{Notation}

\newtheorem{rem}[thm]{Remark}


\title{On the Cayley semigroup of a finite aperiodic semigroup}
\author{Avi Mintz\footnote{This research was done as part of the requirement for a PhD in Bar Ilan University and was done under supervision of Stuart Margolis}}

\begin{document}

\maketitle

\abstract{Let $S$ be a finite semigroup. In this paper we introduce
the functions $\varphi_s:S^* \to S^*$, first defined by Rhodes,
given by $\varphi_s([a_1,a_2 ,\ldots,a_n]) = [sa_1,sa_1a_2,\ldots,
sa_1a_2 \cdots a_n]$. We show that if $S$ is a finite aperiodic
semigroup, then the semigroup generated by the functions
$\{\varphi_s\}_{s \in S}$ is finite and aperiodic.

\section{Introduction}
Let $S$ be a finite semigroup. Rhodes considered in 1965
\cite{arbib} a function defined on the free monoid $S^*$. He called
this the machine of the semigroup. Formally, to each element $s$ of
the semigroup, he assigned a function $\varphi_s : S^* \to S^*$,
defined by $\varphi_s([a_1,a_2
,\ldots,a_n])=[sa_1,sa_1a_2,\ldots,sa_1a_2 \cdots a_n]$.
Essentially, the $\varphi_s$ arise from considering the Cayley graph
to be a sequential machine and assigning a function to each state $s
\in S$ considered as a start state. We call the semigroup generated
by all such functions the Cayley semigroup of $S$, and denote it by
$Cayley(S)$.

Rhodes showed that if $S$ is finite, then $S$ is aperiodic, that is,
has no non-trivial subgroups, if and only if for every $s \in S$, there
exists an $n \in \mathbb N$ such that $\varphi_s^n=\varphi_s^{n+1}$.
This construction played a key role in the original Krohn-Rhodes
Decomposition Theorem.


Grigorchuk and others have intensively studied semigroups and groups
generated by finite automata and these have many important
properties. Let $\mathbb Z_2$ denote the cyclic group of order $2$,
and denote its elements by $\{1,x\}$. The above gives two functions
$\varphi_1$ and $\varphi_x$ which are both invertible as functions
$\mathbb Z_2^* \to \mathbb Z_2^*$. A natural question is to describe
the group generated by them.

In \cite{grigplus2} and \cite{grigzuk} this group was proved to be
$\mathbb Z_2 wr \mathbb Z$, also known as the lamplighter group.
Later on this result was generalized by Silva and Steinberg to all
abelian group in \cite{benpedro} with some results on non-abelian
groups. Recently, Grigorchuk and Zuk, used these techniques to
calculate the spectrum of the lamplighter group \cite{grigzuk}.

In this paper we study the Cayley semigroups of other finite
semigroups. The first result is that if the semigroup $S$ divides
the semigroup $T$, then $Cayley(S)$ divides $Cayley(T)$. This
motivates us to work with semigroups that are not divisible by
groups, i.e. aperiodic semigroups. The main result, generalizing
Rhodes \cite{arbib} is that if $S$ is a finite aperiodic semigroup
then $Cayley(S)$ is a finite and aperiodic semigroup.

\section{Preliminaries}

Let $S$ be a semigroup. If $S$ is not a monoid, we denote by $S^1$
the monoid $S \cup \{1\}$ with the obvious multiplication making
$1$ the identity element. If $S$ happens to be a monoid already,
$S^1$ will equal $S$. In a similar way we can add a zero to a
semigroup $S$, if it doesn't have one. This semigroup is denoted by
$S^0$. For a set $X$ we write $X^*$ for the directed tree of all
strings over $X$, and $\epsilon$ for the empty word. For all $s \in
S$ define $\varphi_s:(S^1)^* \to S^*$ by $\varphi_s([a_1,a_2
,\ldots,a_n])=[sa_1,sa_1a_2,\ldots,sa_1a_2 \cdots a_n]$, and
$\varphi_s(\epsilon)=\epsilon$.
We note that unless $S$ is a group, these functions are not
invertible. We consider the semigroup generated by $\{\varphi_s\}_{s
\in S}$, and denote it by $Cayley(S)$. We will use
$\varphi_s,\varphi_t$ to denote generators of $Cayley(S)$ and $f,g$
to denote elements of $Cayley(S)$.

\subsection{Semigroup Actions}
A semigroup $S$ acts on the left of a set $Y$
if there is a correspondence $S \times Y \to Y$ $(s,y) \mapsto sy$
that satisfies $(st)y=s(ty)$.

A special case of this is when $Y$ has the structure of a rooted
tree. In our case we mean that every $y \in Y$ has a depth, denoted
by $|y|$ and every two elements $y_1,y_2 \in Y$ have a least upper bound
(lub) denoted by $y_1\vee y_2$, which is the longest common path on the
geodesics from the root to $y_1$ and $y_2$.

We assume the reader is familiar with these notions, but mention
that in the case of strings $X^*$ over an alphabet $X$ there is a
natural structure of a tree.

For example $|abbab|=5$ and $abbaab \vee abbb =abb$.

For two strings $w_1,w_2$, we call $w_1$ a prefix of $w_2$ if for
some $w_3$ we have $w_1 w_3=w_2$. Thus $u \vee v$ is the longest
common prefix of $u$ and $v$.

\begin{defn}
Let $f:X^* \to X^*$. We say that $f$ is a tree endomorphism if for
every $w_1,w_2 \in W^*$ we have
\begin{enumerate}
\item
$|w_1|=|f(w_1)|$,
\item
$f(w_1 \vee w_2)$ is a prefix of $f(w_1) \vee f(w_2)$.
\end{enumerate}
\end{defn}

%
%
\subsection{Mealy Automata}
We recall that a Mealy automata is a 5-tuple
$<A,Q,i,\delta,\lambda>$, with $A,Q$ finite sets, $i \in Q$,
$\delta:Q \times A \to Q$ and $\lambda:Q \times A \to A$. A Mealy
automata has a representation as a labeled directed graph. The
vertices of the graph are the elements of $Q$, and for every $q$ in
$Q$ and $a$ in $A$ there is an edge from $q$ to $\delta(q,a)$
labeled with $a/\lambda(q,a)$. For every word $w \in A^*$, we have a
walk on the graph, by starting at the vertex $i$ and each time we
see a letter $a$ of $w$ we walk from the state $q$ to the state
$\delta(q,a)$. Since this is a labeled graph, we can read the labels
as we read the word $w=[a_1,a_2,\ldots,a_n]$ and obtain the output
word
$f_i(w)=[\lambda(i,a_1),\lambda(\delta(i,a_1),a_2),\ldots,\lambda(\delta(\cdots(\delta(i,a_1),\cdots,)a_{n-1}),a_n)]$.
Thus a Mealy automata defines a function $f_i:A^* \to A^*$.

We can also ignore the state $i$, and thus the Mealy automaton
defines a collection of functions $A^* \to A^*$, each one determined
by its initial state $i$.

Let $S$ be a semigroup. The (right) Cayley graph of $S$ is the graph
described by a set of vertices $S^1$, and for every $s_1,s_2$ in $S$
there is an edge from $s_1$ to $s_1s_2$.

We can consider the Cayley graph of a finite semigroup as a Mealy
automaton by labeling the edge from $s_1$ to $s_1s_2$ with
$s_2/s_1s_2$. Using the formalities of Mealy automata this becomes
$Q=S$,$A=S^1$ and $\delta(q,a)=\lambda(q,a)=qa$.


\subsection{Examples}
Let's consider the five semigroups of order 2, and their Cayley
automata.

\begin{enumerate}
\item
$S_1=<a,b|a^2=ab=a,b^2=ba=b>$, the left zero semigroup of order $2$.

$\xymatrix{a \ar@(l,u)^{a,b,1/a} &  b\ar@(r,u)_{a,b,1/b}} $

$\varphi_a([x_1,x_2,\ldots,x_n])=[ax_1,ax_1x_2,\ldots,ax_1x_2\cdots
x_n]=[a,a,\ldots,a]$.

We get $Cayley(S_1) \cong S_1$.

\item
$S_2=<a,b|a^2=ba=a,b^2=ab=b>$, the right zero semigroup of order
$2$.

$ \xymatrix{a \ar@(l,u)^{a,1/a} \ar@/^/[r]^{b/b} &
b\ar@(r,u)_{b,1/b} \ar[l]^{a/a} }$

$\varphi_a([x_1,x_2,\ldots,x_n])=[ax_1,ax_1x_2,\ldots,ax_1x_2\cdots
x_n]$. If we write this as $\varphi_a([x_1,x_2,\ldots,x_n])=[y_1,y_2,\ldots,y_n]$, then we have $y_i=x_j$ where $j$ is the largest number less than $i$ such that $x_j\neq 1$. If no such $j$ exists, then $y_i=a$ (this $a$ comes from $\varphi_a$).

We verify that $\varphi_x\varphi_y = \varphi_y$ for $x,y \in
\{a,b\}$ so that $Cayley(S_2) \cong S_2$.

\item
$S_3=(\{0,1\},\cdot)$, the 2-element semilattice.

$ \xymatrix{0 \ar@(l,u)^{0,1/0} & 1\ar@(r,u)_{1/1} \ar[l]^{0/0} }$

$S_3$ is a monoid so we don't need to add a $1$ to the graph.

$\varphi_1([x_1,x_2,\ldots,x_n])=[1x_1,1x_1x_2,\ldots,1x_1x_2\cdots
x_n]$

$\varphi_0([x_1,x_2,\ldots,x_n])=[0,0,0,0,0]$

It is clear that $\varphi_0$ inputs a string of length $n$ and
outputs a string of $0$'s of length $n$.

Let $k$ be the smallest such that $x_k=0$. Then
$\varphi_1([x_1,x_2,\ldots,x_n])=[1,1,\ldots,1,0,0,\ldots,0]$ where
the first $0$ is at the $k$ place.

We get $Cayley(S_3) \cong S_3$.

\item
$S_4=<x|x^2=0>$, the 2 element nil semigroup.

$ \xymatrix{0 \ar@(l,u)^{0,x,1/0} & x \ar[l]_{x,0/0}
\ar@(r,u)_{1/x}}$

$\varphi_x([x_1,x_2,\ldots,x_n])=[xx_1,xx_1x_2,\ldots,xx_1x_2\cdots
x_n]=[0,0,\ldots,0]$

$\varphi_x([1])=[x]$

$\varphi_0([x_1,x_2,\ldots,x_n])=[0,0,\ldots,0]$

We get $Cayley(S_4) \cong S_4$.

\item
$S_5=\mathbb Z_2=<x|x^2=1>$, the cyclic group of order 2.

$ \xymatrix{0 \ar@(l,u)^{0/0} \ar@/^/[r]^{1/1} &  1\ar@(r,u)_{0/1}
\ar[l]^{1/0} }$

It is proved in \cite{benpedro} that $Cayley(S) \cong \{a,b\}^+$,
the free semigroup on two letters. In this case, $\varphi_x$ and
$\varphi_1$ are invertible and the group generated by
$\{\varphi_x,\varphi_1\}$ is isomorphic to the lamplighter group
$\mathbb Z_2 wr \mathbb Z$.
\end{enumerate}

Another interesting example is the following. Let $S$ be any monoid
and $1$ the identity element. In general,
$\varphi_1([a_1,a_2,\ldots,a_n]) = [a_1,a_1a_2,\ldots,a_1a_2\cdots
a_n]$ $\neq [a_1,a_2,\ldots,a_n]$, so the identity element of a
semigroup is not the identity element of $Cayley(S)$. Actually,
except for trivial cases, $Cayley(S)$ does not contain an identity
element.


\subsection{Finite Semigroups}

We assume standard terminology, facts and notation of finite semigroup theory and refer the reader to \cite{arbib} and \cite{cliffordpreston} for more details.

Let $A$ and $B$ denote finite sets. Let $G$ be a finite group. Let
$C:B \times A \to G \cup \{0\}$. We usually think of $C$ as a $B
\times A$ matrix. Furthermore, assume $C$ has the property that for
every $a \in A$ ($b \in B$) there is a $b \in B$ ($a \in A$) such
that $C(b,a)\neq 0$. A semigroup can be constructed, as follows,
from $A,B,G,C$, denoted by $\mathcal M^0(A,B,G;C)$ and called a Rees
matrix semigroup.

Let $\mathcal M^0(A,B,G;C) =(A \times G \times B) \wzero$ as sets. $0$
is a zero element. It remains to describe the product of elements in
$A \times G \times B$.

$(a_1,g_1,b_1)\cdot(a_2,g_2,b_2)=(a_1,g_1C(b_1,a_2)g_2,b_2)$ if
$C(b_1,a_2)\neq 0$ and $(a_1,g_1,b_1)\cdot(a_2,g_2,b_2)=0$
otherwise.

We state without proof the following theorem.

\begin{thm}[Rees-Sushkevitch]
A finite semigroup $S$ is $0$-simple if and only if it is
isomorphic to a Rees matrix semigroup.
\end{thm}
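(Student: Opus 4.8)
The plan is to prove the two implications separately, with the forward direction (Rees matrix semigroups are $0$-simple) being routine and the converse carrying the substantive content.

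First I would verify that any Rees matrix semigroup $M = \mathcal{M}^0(A,B,G;C)$ is $0$-simple. The hypothesis that every row and every column of $C$ contains a nonzero entry immediately gives $M^2 \neq \{0\}$. For the ideal condition, take an arbitrary nonzero $(a,g,b)$ and an arbitrary target $(a',g',b')$: by the column condition pick $b_0$ with $C(b_0,a) \neq 0$, and by the row condition pick $a_0$ with $C(b,a_0) \neq 0$. Then $(a',h_1,b_0)\,(a,g,b)\,(a_0,h_2,b') = (a',\, h_1\,C(b_0,a)\,g\,C(b,a_0)\,h_2,\, b')$, and since $C(b_0,a),C(b,a_0) \in G$ we may solve for $h_1,h_2 \in G$ making the middle coordinate equal to $g'$. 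Hence every nonzero element generates all of $M$, so the only nonzero ideal is $M$ and $M$ is $0$-simple.

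For the converse, let $S$ be a finite $0$-simple semigroup. The first step is to pass to complete $0$-simplicity: because $S$ is finite it is stable, so $\mathcal{J} = \mathcal{D}$, and the $0$-minimality of the ideal generated by any nonzero element forces $S \setminus \{0\}$ to be a single regular $\mathcal{D}$-class, which therefore contains idempotents. I would then set up the egg-box picture: index the $\mathcal{R}$-classes by $A$ and the $\mathcal{L}$-classes by $B$, so that every nonzero element lies in a unique $\mathcal{H}$-class $H_{a,b} = R_a \cap L_b$, and fix one group $\mathcal{H}$-class $H_{1,1}$ with identity idempotent $e$ and maximal subgroup $G = H_{1,1}$. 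The heart of the argument is the coordinatization: using Green's lemma I would choose, for each $a \in A$, an element $r_a \in H_{a,1}$ and, for each $b \in B$, an element $q_b \in H_{1,b}$ (with $r_1 = q_1 = e$), selected so that the induced translations realize the Green's-lemma bijections between the relevant $\mathcal{H}$-classes. This makes $(a,g,b) \mapsto r_a\,g\,q_b$ a bijection onto $S \setminus \{0\}$. Defining the structure matrix by $C(b,a) = q_b r_a \in G \wzero$, the product rearranges as $r_a\,g\,(q_b r_{a'})\,g'\,q_{b'} = r_a\,(g\,C(b,a')\,g')\,q_{b'}$, matching the Rees multiplication exactly, with $0$ occurring precisely when $C(b,a') = 0$; regularity of the $\mathcal{D}$-class then yields the required row and column conditions on $C$.

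The main obstacle I anticipate is the coordinatization step: verifying that the connecting elements $r_a,q_b$ can be chosen coherently so that $r_a\,g\,q_b$ is simultaneously a bijection and multiplicative in the needed sense. This is where Green's lemma and the precise behavior of left and right translations inside a single $\mathcal{D}$-class must be tracked carefully, the delicate bookkeeping being exactly which translations preserve which $\mathcal{H}$-classes. By contrast, the passage to complete $0$-simplicity and the forward direction are comparatively mechanical.
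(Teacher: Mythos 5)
The paper does not prove this theorem: it is explicitly stated without proof, with the reader deferred to the cited literature (Clifford--Preston), so there is no internal argument to compare against. Your proposal is precisely the classical Rees coordinatization proof found in that reference, and it is correct in outline: the forward direction via the sandwich computation $(a',h_1,b_0)(a,g,b)(a_0,h_2,b')$ is sound (note that every nonzero ideal automatically contains $0$, since $0\cdot i=0$, so showing each nonzero element generates everything nonzero suffices); and the converse via the egg-box coordinatization $(a,g,b)\mapsto r_a g q_b$ with $C(b,a)=q_b r_a$ is the standard and correct route. Two small points of precision. First, your claim that ``$0$-minimality\ldots forces $S\setminus\{0\}$ to be a single regular $\mathcal{D}$-class'' is slightly misattributed: that the nonzero elements form one $\mathcal{J}$-class (hence $\mathcal{D}$-class, by stability) follows from $SaS=S$ for $a\neq 0$, but regularity additionally requires $S^2\neq\{0\}$ --- pick nonzero $u,v$ with $uv\neq 0$; stability puts $uv$ in $R_u\cap L_v$, and the Clifford--Miller theorem then yields an idempotent in $L_u\cap R_v$, making the $\mathcal{D}$-class regular. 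This is exactly where the condition excluding null semigroups (the paper's example $S_4$) enters, and omitting it would leave a gap. Second, the ``coherent selection'' of $r_a\in H_{a,1}$ and $q_b\in H_{1,b}$ that you flag as the main obstacle is in fact unproblematic: arbitrary representatives work, since Green's lemma makes $x\mapsto r_a x q_b$ a bijection $G\to H_{a,b}$ automatically, and the only remaining check is that $q_b r_{a'}\in G\cup\{0\}$, which again is Clifford--Miller (a nonzero product $xy$ lies in $R_x\cap L_y$). With those two facts made explicit, your plan completes without difficulty.
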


\begin{cor} \hfill\\
\begin{enumerate}
\item
A simple aperiodic semigroup is of the form $A \times B$, with
multiplication $(a_1,b_1)\cdot(a_2,b_2)=(a_1,b_2)$, and in
particular, is an idempotent semigroup.
\item
A $0$-simple aperiodic semigroup is of the form $(A \times B) \wzero$,
with multiplication defined by
$(a_1,b_1)\cdot(a_2,b_2)=(a_1,b_2)C(b_1,a_2)$, where $C$ is a $B \times A$ matrix over $\{0,1\}$.
\item
A non regular $\mathcal J$ class of a finite aperiodic semigroup is
of the form $A \times B$ with null multiplication.
\end{enumerate}
\end{cor}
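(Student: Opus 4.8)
The plan is to derive all three statements from the Rees--Sushkevitch theorem together with the characterization of aperiodicity in finite semigroups. The cornerstone is the observation that a Rees matrix semigroup $\mathcal M^0(A,B,G;C)$ always contains a copy of the group $G$: fixing $a\in A$ and $b\in B$ with $C(b,a)\neq 0$, the set $\{(a,g,b):g\in G\}$ is closed under the product, since $(a,g_1,b)(a,g_2,b)=(a,g_1 C(b,a) g_2,b)$, and after normalizing $C(b,a)=1$ this reproduces the multiplication of $G$, so the set is a subgroup isomorphic to $G$.

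For statement (2) I would first invoke Rees--Sushkevitch to present a $0$-simple aperiodic semigroup as $\mathcal M^0(A,B,G;C)$. Since the semigroup is aperiodic it has no nontrivial subgroup, so by the preceding remark $G=\{1\}$ is trivial. The codomain of $C$ then collapses to $\{0,1\}$, the group coordinate becomes redundant, and identifying $(a,1,b)$ with $(a,b)$ turns the Rees product into $(a_1,b_1)(a_2,b_2)=(a_1,b_2)C(b_1,a_2)$, interpreted as $(a_1,b_2)$ when $C(b_1,a_2)=1$ and as $0$ otherwise. This is exactly the claimed form.

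Statement (1) reduces to (2): for a simple semigroup $S$ one has $S^2=S$ (as $S^2$ is an ideal), and it is standard that $S$ is simple if and only if $S^0$ is $0$-simple. Applying (2) to $S^0$ presents it as $(A\times B)\wzero$ with the matrix multiplication. But $S=S^0\setminus\{0\}$ is closed under the product and never yields $0$, which forces $C(b_1,a_2)=1$ for every entry; the product then collapses to $(a_1,b_1)(a_2,b_2)=(a_1,b_2)$. In particular $(a,b)(a,b)=(a,b)$, so $S$ is idempotent.

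For statement (3) I would pass to the principal factor of the non-regular $\mathcal J$-class $J$, namely $J^0=J\cup\{0\}$ with the product of $x,y\in J$ taken in $S$ when it remains in $J$ and set to $0$ otherwise. The standard dichotomy says this principal factor is $0$-simple when $J$ is regular and null when $J$ is non-regular; thus here the product of any two elements of $J$ lands strictly $\mathcal J$-below $J$, giving the null multiplication. It remains to identify the underlying set with a product $A\times B$, and for this I would use the characterization that a finite semigroup is aperiodic precisely when all its $\mathcal H$-classes are singletons: the egg-box decomposition of $J$ into $\mathcal R$-classes (indexed by $A$) and $\mathcal L$-classes (indexed by $B$) then has singleton intersections, so $(a,b)\mapsto R_a\cap L_b$ is a bijection $A\times B\to J$. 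I expect the main obstacle to be this final step---assembling the index sets and checking the bijection is compatible with the product---together with justifying the regular/null dichotomy for principal factors, which is precisely where the finiteness and aperiodicity of $S$ are genuinely used.
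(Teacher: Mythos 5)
Your proposal is correct and takes essentially the same route as the paper: parts (1) and (2) rest on the identical Rees--Sushkevitch argument---the subgroup $\{(a,g,b):g\in G\}$ attached to an entry $C(b,a)\neq 0$ forces $G=\{1\}$ by aperiodicity, and the zero-free matrix observation for the simple case matches the paper's, your only deviation being order (you prove (2) first and deduce (1), while the paper proves (1) via $S\subseteq S^0$ and calls (2) ``similar''). For (3) the paper simply defers to the Sch\"utzenberger-group machinery in Clifford--Preston, and your argument via trivial $\mathcal H$-classes, the egg-box picture, and the regular/null principal-factor dichotomy (correctly flagged as the point where finiteness is used) is precisely the standard content of that citation.
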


\begin{proof} \hfill\\
\begin{enumerate}
\item
If $S$ is a simple aperiodic semigroup, then $S^0$ is a $0$-simple semigroup, and thus $S^0$ is isomorphic to $M=\mathcal M^0(A,B,G;C)$. Since $M$ has a subgroup isomorphic to $G$ (this can be shown by choosing $a,b$ such that $C(b,a)\neq 0$ and considering $\{(a,g,b)\}_{g \in G}$), and $M$ is aperiodic, we see that $G=\{1\}$. Since $S$ is a subsemigroup of $S^0$, the matrix $C$ does not contain any zeros.
\item
The proof of the second point is similar.
\item
This follows from the basic
theorems and definitions of the Sch\"utzenberger group. See
\cite{cliffordpreston}.
\end{enumerate}
\end{proof}

Another important corollary is the following.
\begin{cor}
In a $0$-simple semigroup, $s_1s_2\cdots s_n=0$ if and only if
$s_is_{i+1}=0$ for some $1\leq i \leq n-1$.
\end{cor}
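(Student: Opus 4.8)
The plan is to prove the two implications separately, the reverse being immediate and the forward direction carrying the real content. For the reverse implication, suppose $s_i s_{i+1} = 0$ for some $i$ with $1 \le i \le n-1$. Then by associativity I bracket the product as $s_1 \cdots s_n = (s_1\cdots s_{i-1})(s_i s_{i+1})(s_{i+2}\cdots s_n)$, and since $0$ is a two-sided zero element, the presence of a factor equal to $0$ forces the entire product to be $0$.

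For the forward implication I would argue by contraposition: assuming $s_i s_{i+1} \neq 0$ for every $i$, I want to conclude $s_1\cdots s_n \neq 0$. First I invoke the Rees--Sushkevitch theorem to identify $S$ with a Rees matrix semigroup $\mathcal M^0(A,B,G;C)$. Since a zero factor would annihilate any product containing it, the hypothesis $s_i s_{i+1}\neq 0$ forces each individual $s_i$ to be nonzero, so I may write $s_i = (a_i, g_i, b_i)$. In this representation the condition $s_i s_{i+1}\neq 0$ is, by the multiplication rule, exactly the statement that the matrix entry $C(b_i, a_{i+1})$ is nonzero.

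The heart of the argument is an induction showing that every partial product $s_1\cdots s_k$ is nonzero and has the form $(a_1, h_k, b_k)$ for some $h_k \in G$ --- that is, it retains the row index $a_1$ of the first factor and the column index $b_k$ of the last factor. The base case $k=1$ is trivial. For the inductive step I multiply $(a_1, h_k, b_k)$ by $s_{k+1} = (a_{k+1}, g_{k+1}, b_{k+1})$; by the multiplication rule this product is nonzero precisely when $C(b_k, a_{k+1})\neq 0$, in which case it equals $(a_1, h_k C(b_k, a_{k+1}) g_{k+1}, b_{k+1})$, again of the desired form. Since the column index surviving in $s_1\cdots s_k$ is the very same $b_k$ occurring in $s_k$, the condition governing whether the next multiplication produces $0$ is exactly $C(b_k, a_{k+1})\neq 0$, i.e. $s_k s_{k+1}\neq 0$, which holds by hypothesis. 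Hence all partial products are nonzero, and in particular $s_1\cdots s_n\neq 0$.

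The step I expect to require the most care is precisely this tracking of indices. The reason the global nonvanishing condition decomposes into the local conditions $s_i s_{i+1}\neq 0$ is that multiplication in a Rees matrix semigroup only ever consults the column index of the left factor and the row index of the right factor, with the group coordinates (which never vanish) playing no role in whether the result is $0$. Verifying that $s_1\cdots s_k$ genuinely keeps $s_k$'s column index $b_k$, rather than some quantity depending on the earlier factors, is what licenses the local-to-global reduction and is the one point where the structure theorem is essential.
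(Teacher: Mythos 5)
Your proof is correct and follows exactly the route the paper intends: the corollary is stated immediately after the Rees--Sushkevitch theorem without proof, and the intended argument is precisely your index-tracking in the Rees matrix representation, where a product of nonzero elements $(a_i,g_i,b_i)$ vanishes only when some sandwich entry $C(b_i,a_{i+1})$ is zero, i.e.\ when $s_is_{i+1}=0$. Your induction showing the partial product $s_1\cdots s_k$ retains the form $(a_1,h_k,b_k)$ is the detail the paper leaves implicit, and it is handled correctly (note only that, as everywhere in the paper, $S$ is finite, so the structure theorem applies).
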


We would like to extend the last two corollaries to a semigroup
acting on a $0$-minimal ideal. 

\begin{lemma}
Let $S$ be an aperiodic semigroup, and $I\cong A \times B \wzero$ a
$0$-minimal regular ideal. Let $s \in S$ and $(a,b) \in I \setminus
\{0\}$.
\begin{enumerate}
\item
$s(a,b)=0$ if and only if $s(a,b')=0$ for every $(a,b')$ in the
 $\mathcal R$ class of $(a,b)$.
\item
If $s(a,b)\neq 0$ then $s(a,b)=(a',b)$ for some $a' \in A$ which
depends only on $a$ and $s$.
\end{enumerate}
\end{lemma}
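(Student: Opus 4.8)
The plan is to reduce everything to multiplication inside $I$, using the explicit Rees matrix structure supplied by the second corollary. Since $I$ is a $0$-minimal regular ideal it is $0$-simple, and being a subsemigroup of the aperiodic $S$ it is aperiodic; hence $I \cong (A \times B) \wzero$ with product $(a_1,b_1)(a_2,b_2)=(a_1,b_2)C(b_1,a_2)$, where $C$ is a $\{0,1\}$-matrix with no zero row and no zero column. In this presentation the $\mathcal R$-class of a nonzero element $(a,b)$ is exactly the row $\{(a,b'):b'\in B\}$ (right multiplication fixes the first coordinate), and its $\mathcal L$-class is the column $\{(a',b):a'\in A\}$. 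Because $I$ is an ideal, $s(a,b)\in I$ for every $s\in S$, so $s(a,b)$ is either $0$ or of the form $(c,d)$; the two assertions amount to saying that when it is nonzero we must have $d=b$ with $c$ independent of $b$, and that nonvanishing is a property of the whole row.

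First I would prove the column-preservation step, which I expect to be the main obstacle: if $s(a,b)=(c,d)\neq 0$ then $d=b$. Here I would manufacture a right identity for $(a,b)$ inside $I$. Since row $b$ of $C$ is nonzero, pick $a_1\in A$ with $C(b,a_1)=1$; then $g=(a_1,b)$ satisfies $g^2=g$ and $(a,b)g=(a,b)$. Associativity in $S$ now gives $s(a,b)=s\big((a,b)g\big)=\big(s(a,b)\big)g=(c,d)(a_1,b)=(c,b)C(d,a_1)$, and since the left-hand side is nonzero we must have $C(d,a_1)=1$ and $(c,d)=(c,b)$, i.e. $d=b$. Alternatively one can argue structurally: $s(a,b)\le_{\mathcal L}(a,b)$, both are nonzero hence $\mathcal J$-equivalent in $I$, so by stability of finite semigroups $s(a,b)\,\mathcal L\,(a,b)$, which is the same conclusion. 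I prefer the elementary computation because it stays entirely inside $I$.

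With column-preservation in hand, the remaining claims follow from a single computation that transports the value along the row. Fix $b'\in B$ and choose, again using that row $b$ of $C$ is nonzero, an element $a_1'\in A$ with $C(b,a_1')=1$, so that $(a,b)(a_1',b')=(a,b')$. Then $s(a,b')=s\big((a,b)(a_1',b')\big)=\big(s(a,b)\big)(a_1',b')=(c,b)(a_1',b')=(c,b')$. Two things drop out at once: if $s(a,b)\neq 0$ then $s(a,b')=(c,b')\neq 0$ for every $b'$, and all these images share the first coordinate $c$; thus $s(a,b)=(c,b)$ with $c=c(a,s)$ depending only on $a$ and $s$, which is the second assertion (with $a'=c$). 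For the first assertion, the displayed implication says that nonvanishing of $s$ on one element of the row forces nonvanishing on the entire row; taking contrapositives, and using that any two elements of the row play symmetric roles, shows that $s(a,b)=0$ if and only if $s(a,b')=0$ for every $(a,b')$ in the $\mathcal R$-class of $(a,b)$.

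The only delicate point is the column-preservation step; everything else is bookkeeping with the product formula, and the hypothesis that $C$ has no zero row is exactly what licenses the two choices of $a_1$ and $a_1'$. I would double check that each product I form genuinely lies in $I$, so that the Rees formula applies, and that no intermediate term is accidentally $0$; but these are immediate from $I$ being an ideal and from the nonvanishing assumption.
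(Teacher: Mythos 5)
Your proposal is correct and takes essentially the same route as the paper: your column-preservation step (choosing $a_1$ with $C(b,a_1)=1$ and applying associativity to $s(a,b)(a_1,b)$) is precisely the paper's argument for the second point, and your explicit row-transport computation $s(a,b')=\bigl(s(a,b)\bigr)(a_1',b')=(c,b')$ is just an unpacked, Rees-coordinate version of the paper's one-line appeal to the fact that $\mathcal R$ is a left congruence. There is no gap; your version is merely more self-contained.
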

We will write this $a'$ as $sa$.

\begin{proof}
Both statements are a result of the fact that the $\mathcal R$
relation is a left congruence ($a \mathcal Rb$ implies $xa \mathcal
R xb$). 
For the second point, choose an $a_b \in A$ such that $C(b,a_b)=1$
and apply the associative law to $s(a,b)(a_b,b)$.
\end{proof}

\begin{nota}
This justifies an extended notation for the structure matrix of
$I$. $C:S \times A \to \{0,1\}$. $C(s,a)=0$ if $s(a,b)=0$ and
$C(s,a)=1$ if $s(a,b) \in A \times B$.
\end{nota}

\begin{nota}\label{nota sa}
Furthermore this shows that $S$ has a left action on the set $A \cup \{0\}$, where $sa$ is
the $a'$ in the formula $s(a,b)=(a',b)$. We have shown that this does not
depend on choice of $b$.
\end{nota}

With every $\mathcal J$ class of a semigroup $S$ we can associate a
$0$-simple or null semigroup, which we will now define.

\subsection{Trace of a $\mathcal J$ class}
Let $S$ be a semigroup, and $J$ a $\mathcal J$ class of $S$. Let
$\theta$ be an element disjoint from $S$ and consider the following
multiplication on $J$. If $j_1\cdot j_2$ isn't in $J$ we write their
product as $\theta$. The set $J \cup \{\theta\}$ with $\theta$ as a
zero element is a semigroup called the trace of $J$, and denoted by
$J^{tr}$. It is known (and easy to show) that $J^{tr} \prec S$.
Another possible construction of $J^{tr}$ is by defining $T$ to be
the ideal $S^1JS^1 \setminus J$ and then $J^{tr} \cong S^1JS^1 / T$.

\subsection{Semidirect product of semigroups}
Let $S$ and $T$ be finite semigroups. Suppose that $T$ acts on $S$
on the left. That is, for every $s \in S$ and $t \in T$ there is
an element of $S$ denoted by $^ts$ such that
$^{t_1}(^{t_2}s)=^{t_1t_2}s$ and $^t(s_1s_2)=^ts_1^ts_2$. In this
case we can define the (left) semidirect product of $S$ and $T$
whose underlying set is $S \times T$ with multiplication given by
$(s_1,t_1)(s_2,t_2)=(s_1^{t_1}s_2,t_1t_2)$. This semigroup is
denoted by $S \rtimes T$. There is also the dual notion of a right
semidirect product.

\section{Fractalness Property of $Cayley(S)$}

\subsection{The Pascal Array}
In \cite{arbib} Rhodes introduced the notion of a Pascal array. This
gives some intuition to the action of $Cayley(S)$. This construction
motivates many of the following proofs.

Let $s_1,s_2,\ldots,s_n\in S$ and $a_1,a_2,\ldots,a_k \in S^1$. We describe the
$(n+1) \times (k+1)$ table $(t_{ij})$. $t_{00}$ is left empty,
$t_{0i}=a_i$ and $t_{j0}=s_j$. The table is completed by the formula
$t_{mn}=t_{m(n-1)}\cdot t_{(m-1)n}$.

\hfill\\

\begin{tabular}{|r|r|r|r|r|}
\hline
&$a_1=t_{01}$ & $a_2=t_{02}$ & $a_3=t_{03}$ & $\cdots$\\
\hline
$s_1=t_{10}$ & $s_1a_1=t_{11}$ & $s_1a_1a_2=t_{12}$ & $s_1a_1a_2a_3=t_{13}$ & $\cdots$ \\
\hline
$s_2=t_{20}$ & $s_2s_1a_1=t_{21}$ & $s_2s_1a_1s_1a_1a_2=t_{22}$ & $s_2s_1a_1s_1a_1a_2s_1a_1a_2a_3=t_{23}$ & $\cdots$ \\
\hline
$s_3=t_{30}$ & $s_3s_2s_1a_1=t_{31}$ & $t_{31}t_{22}=t_{32}$ & $t_{32}t_{23}=t_{33}$ & $\cdots$ \\
\hline
$\cdots$ & $\cdots$ & $\cdots$ & $\cdots$ &\\
\hline
\end{tabular}

\hfill\\

The table describes the action of the function $\varphi_{s_n}\cdots
\varphi_{s_3}\varphi_{s_2}\varphi_{s_1}$ on the word
$[a_1,a_2,\ldots,a_k]$. The bottom row (excluding the first column) is the output.

\subsection{Fractalness}
Let $f=\varphi_{s_n} \ldots \varphi_{s_2} \varphi_{s_1} \in
Cayley(S)$ and let $v \in (S^1)^\star$. Since $Cayley(S)$ is a semigroup
of tree endomorphisms, the function $g:(S^1)^\star \to S^\star$
defined by $f(vw)=f(v)\circ g(w) \forall w \in (S^1)^\star$ is
well-defined. We denote this function $g$ by $f_v$.

The following theorem will show that $f_v$ is in $Cayley(S)$
whenever $f \in Cayley(S)$ and $v \in (S^1)^*$. Let $p:S^* \setminus
\{\epsilon\} \to S$ be the function defined by
$p([v_1,v_2,\ldots,v_n])=v_n$. Observe that for every $f \in
Cayley(S)$ and every $w \in (S^1)^*$ we have $pf(w) \in S$ (and
not\footnote{We do not identify strings of length 1 over $S$ with
$S$} in $S^*$). This shows that writing $\varphi_{pf(w)}$ is
meaningful.



\begin{thm}\label{fractalness}
Let $f\in Cayley(S)$. Let $v \in (S^1)^*$, and let $f_v:(S^1)^* \to S^*$
be defined by $f(vw)=f(v)\circ f_v(w)$. If we write $f$ as
$f=\varphi_{s_n} \ldots \varphi_{s_2} \varphi_{s_1}$ then
$f_v=\varphi_{(p\varphi_{s_n}\cdots\varphi_{s_1}(v))} \ldots
\varphi_{(p\varphi_{s_2}\varphi_{s_1}(v))}
\varphi_{(p\varphi_{s_1}(v))}$. We also have $f_\epsilon=f$. Thus if
$f \in Cayley(S)$, so is $f_v$ for all $v \in (S^1)^*$.
\end{thm}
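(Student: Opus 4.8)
The plan is to prove the identity by induction on $n$, the number of generators in the expression $f=\varphi_{s_n}\cdots\varphi_{s_1}$, after first disposing of the degenerate case $v=\epsilon$. When $v=\epsilon$ the defining relation $f(vw)=f(v)\circ f_v(w)$ reads $f(w)=\epsilon\circ f_\epsilon(w)$, so $f_\epsilon=f$ immediately; note that the displayed product formula is only meaningful for $v\neq\epsilon$, since $\varphi_s(\epsilon)=\epsilon$ and $p$ is undefined on $\epsilon$. Henceforth I assume $v\neq\epsilon$. Since each $\varphi_s$ preserves length, $g(v)\neq\epsilon$ for every $g\in Cayley(S)$, so every projection $p(\cdots)$ appearing below is legitimate.

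The base case $n=1$ is a direct computation with the definition of $\varphi_s$. Writing $v=[v_1,\ldots,v_m]$ and $w=[w_1,\ldots,w_k]$, so that $vw=[v_1,\ldots,v_m,w_1,\ldots,w_k]$, the definition gives
\[
\varphi_s(vw)=[sv_1,\ldots,sv_1\cdots v_m,\; sv_1\cdots v_m w_1,\ldots,sv_1\cdots v_m w_1\cdots w_k].
\]
The first $m$ entries are exactly $\varphi_s(v)$, and, setting $t=sv_1\cdots v_m=p\varphi_s(v)$, the remaining $k$ entries are $[tw_1,\ldots,tw_1\cdots w_k]=\varphi_t(w)$. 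Hence $\varphi_s(vw)=\varphi_s(v)\circ\varphi_{p\varphi_s(v)}(w)$, and by the well-definedness of the decomposition (granted in the paragraph preceding the theorem) this means $(\varphi_s)_v=\varphi_{p\varphi_s(v)}$, which is the asserted formula for $n=1$.

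For the inductive step I would isolate a \emph{chain rule} for the decomposition: if $f=h\circ g$ with $h,g\in Cayley(S)$, then $f_v=h_{g(v)}\circ g_v$. This is formal, since
\[
f(vw)=h\bigl(g(vw)\bigr)=h\bigl(g(v)\circ g_v(w)\bigr)=h(g(v))\circ h_{g(v)}(g_v(w))=f(v)\circ(h_{g(v)}\circ g_v)(w),
\]
where the third equality is the defining relation for $h$ applied at the point $g(v)\neq\epsilon$. Now write $f=\varphi_{s_n}\circ g$ with $g=\varphi_{s_{n-1}}\cdots\varphi_{s_1}$. The chain rule and the base case give $f_v=(\varphi_{s_n})_{g(v)}\circ g_v=\varphi_{p\varphi_{s_n}(g(v))}\circ g_v$, and since $\varphi_{s_n}(g(v))=\varphi_{s_n}\cdots\varphi_{s_1}(v)$ the new leading factor is exactly $\varphi_{p\varphi_{s_n}\cdots\varphi_{s_1}(v)}$. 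Applying the induction hypothesis to $g_v$ supplies the remaining factors and completes the induction; that $f_v\in Cayley(S)$ is then immediate, being a product of generators.

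I expect the only genuine difficulty to be bookkeeping rather than anything conceptual: one must keep the prefixes in the subscripts aligned, verifying that the factor freshly produced at stage $n$ carries the projection of the \emph{full} word $\varphi_{s_n}\cdots\varphi_{s_1}(v)$, while the factors inherited from $g_v$ retain their shorter prefixes $p\varphi_{s_j}\cdots\varphi_{s_1}(v)$ for $j<n$. The Pascal array of the previous subsection is the right picture here: splitting the input into $v$ followed by $w$, the entries of the column indexed by the last letter of $v$ are precisely the elements $p\varphi_{s_j}\cdots\varphi_{s_1}(v)$, and these are exactly the start states that $f_v$ must use to continue processing $w$. The one point requiring justification beyond calculation is the uniqueness invoked in both the base case and the chain rule, but this is exactly the well-definedness of $f_v$ already established before the theorem statement.
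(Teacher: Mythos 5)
Your proof is correct and follows essentially the same route as the paper's: induction on $n$ with the same direct computation for the base case $n=1$, and an inductive step that peels off the leftmost generator $\varphi_{s_n}$ and applies the base case at the point $g(v)$ where $g=\varphi_{s_{n-1}}\cdots\varphi_{s_1}$. Your explicit ``chain rule'' $f_v=h_{g(v)}\circ g_v$ is just a clean abstraction of the manipulation the paper performs inline, and your handling of $v=\epsilon$ and of well-definedness matches the paper's setup.
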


\begin{proof}
This can be seen by observing the Pascal array. When we apply
$\varphi_{s_n}\cdots\varphi_{s_1}$ to $vw$, we can look at the
column of the last letter of $v$. The elements there are acting on $w$, describing the
action of $f_v$ on $w$. For a full proof we induct on $n$. If $n=1$
then let $w=[w_1,w_2,\ldots,w_k]$ and $v=[v_1,v_2,\ldots,v_l]$ and
write $\bar{v}=v_1v_2\ldots v_l$. Notice that $p\varphi_s(v)=s\bar
v$ for every $s$. We need to show that $\varphi_s(vw) =
\varphi_s(v)\circ\varphi_{p(\varphi_s(v))}(w)$.

Calculating gives us
\begin{align*}
\varphi_s(vw) &=
\varphi_s([v_1,v_2,\ldots,v_{l-1},v_l,w_1,\ldots,w_k]) \\
&= [sv_1,sv_1v_2,\ldots,sv_1v_2v_{l-1},s\bar v,s\bar v w_1,\ldots,s
\bar v w_1\cdots w_k] \\ &= [sv_1,sv_1v_2,\ldots,sv_1v_2v_{l-1},s\bar v]\circ[s\bar v w_1,\ldots,s
\bar v w_1\cdots w_k] \\&= \varphi_s(v) \circ\varphi_{s\bar v}(w) \\ &=
\varphi_s(v)\circ \varphi_{p({\varphi_s(v)})}(w)
\end{align*}

Let's assume this is correct for $n-1$. Let $v$ and $w$ be as
before.

Using the induction assumption (at the first $=$ sign) and the $n=1$
case (at the second), we get:
\begin{align*}
&\varphi_{s_n} \varphi_{s_{n-1}} \ldots \varphi_{s_1} (vw) = \\
&\varphi_{s_n} ( \varphi_{s_{n-1}} \ldots \varphi_{s_1} (v)\circ
\varphi_{(p\varphi_{s_{n-1}}\cdots \varphi_{s_1}(v))} \cdots \varphi_{(p\varphi_{s_2}\varphi_{s_1}(v))} \varphi_{(p\varphi_{s_1}(v))}(w) ) = \\
&\varphi_{s_n} ( \varphi_{s_{n-1}} \ldots \varphi_{s_1} (v))\circ
\varphi_{(p\varphi_{s_n}(\varphi_{s_{n-1}}\cdots\varphi_{s_1}(v)))}
\ldots \varphi_{(p\varphi_{s_2}\varphi_{s_1}(v))}
\varphi_{(p\varphi_{s_1}(v))}(w)) = \\
&\varphi_{s_n} \varphi_{s_{n-1}} \ldots \varphi_{s_1} (v)\circ
(\varphi_{(p\varphi_{s_n}\cdots\varphi_{s_1}(v))} \ldots
\varphi_{(p\varphi_{s_2}\varphi_{s_1}(v))}\varphi_{(p\varphi_{s_1}(v))})(w)
\end{align*}

\end{proof}

\section{Preliminary Results}

\subsection{Cayley as a Functor}
\begin{lemma}
\begin{enumerate}
\item
If $T$ is a quotient semigroup of $S$ then $Cayley(T)$ is a quotient
of $Cayley(S)$.
\item
If $T$ is a subsemigroup of $S$ then $Cayley(T) \prec Cayley(S)$.
\end{enumerate}
\end{lemma}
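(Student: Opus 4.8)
The plan is to handle both parts with a single device: relate the action of the generators of $Cayley(S)$ on strings over $S^1$ to the action of the generators of $Cayley(T)$ on strings over $T^1$ through a letterwise map between the two string spaces. Throughout I write $\varphi_s$ ($s\in S$) for the generators of $Cayley(S)$ and $\psi_t$ ($t\in T$) for the generators of $Cayley(T)$.

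For part 1, let $\pi\colon S\to T$ be a surjective homomorphism. First I would extend $\pi$ to a monoid homomorphism $\pi\colon S^1\to T^1$ by setting $\pi(1)=1$, and then extend it letterwise to a map $\bar\pi\colon (S^1)^*\to(T^1)^*$; since $\pi$ is onto, $\bar\pi$ is onto and length-preserving. The key computation is the intertwining identity
\[
\bar\pi\circ\varphi_s=\psi_{\pi(s)}\circ\bar\pi \qquad(s\in S),
\]
which is immediate from $\pi(s a_1\cdots a_k)=\pi(s)\pi(a_1)\cdots\pi(a_k)$ applied coordinate by coordinate. By induction on word length this lifts to $\bar\pi\circ f=\Pi(f)\circ\bar\pi$ for every $f=\varphi_{s_n}\cdots\varphi_{s_1}$, where $\Pi(f):=\psi_{\pi(s_n)}\cdots\psi_{\pi(s_1)}$. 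I would then define $\Pi\colon Cayley(S)\to Cayley(T)$ by this formula; it is visibly surjective (every generator $\psi_t$ is hit by choosing a preimage of $t$) and multiplicative. The only thing requiring argument is well-definedness: if $f=f'$ as functions on $(S^1)^*$, then $\Pi(f)\circ\bar\pi=\bar\pi\circ f=\bar\pi\circ f'=\Pi(f')\circ\bar\pi$, and since $\bar\pi$ is surjective we may cancel it on the right to conclude $\Pi(f)=\Pi(f')$. This is the crux of part 1.

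For part 2, let $T$ be a subsemigroup of $S$. I would identify $T^1$ with a subset of $S^1$ (sending the adjoined identity of $T^1$ to the identity of $S^1$; one checks this is consistent, the only delicate case being when $T$ is not a monoid, where $1_S\notin T$ forces the adjoined identities to be identified without collision). Under this identification $(T^1)^*\subseteq(S^1)^*$. The observation that makes everything work is that for $t\in T$ the generator $\varphi_t$ of $Cayley(S)$ sends $(T^1)^*$ into $T^*\subseteq(T^1)^*$, and its restriction to $(T^1)^*$ is exactly $\psi_t$. Hence the subsemigroup $U\leq Cayley(S)$ generated by $\{\varphi_t\}_{t\in T}$ preserves $(T^1)^*$, and restriction $\rho\colon U\to Cayley(T)$, $\rho(f)=f|_{(T^1)^*}$, is a well-defined homomorphism sending $\varphi_{t_n}\cdots\varphi_{t_1}$ to $\psi_{t_n}\cdots\psi_{t_1}$; it is onto since it hits every generator of $Cayley(T)$. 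Thus $Cayley(T)\prec Cayley(S)$.

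I expect the main obstacle to be the well-definedness issue in part 1 rather than any computation: elements of $Cayley(S)$ are functions, so a product of generators does not determine the word, and one must verify that applying $\pi$ to the subscripts respects the relations. The surjectivity of $\bar\pi$, together with the intertwining identity, is exactly what lets me cancel and resolve this; the analogous point in part 2 is easier because restriction of functions is automatically well defined, and the only care needed is the $T^1\hookrightarrow S^1$ bookkeeping.
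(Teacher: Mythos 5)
Your proof is correct and follows essentially the same route as the paper: for part 1 the paper likewise extends the surjection to $S^1\to T^1$, maps $\varphi_s\mapsto\varphi_{\bar s}$, and settles well-definedness by the same intertwining computation (your explicit cancellation of the surjective letterwise map $\bar\pi$ is just a fuller spelling-out of what the paper dismisses with ``this follows from the previous calculation''). For part 2 the paper also restricts the subsemigroup generated by $\{\varphi_t\}_{t\in T}$ to the invariant subtree $(T^1)^*$, with the same case split between $T=T^1$ and $T\neq T^1$ and the same identification of $1_{T^1}$ with $1_{S^1}$.
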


\begin{proof}
\begin{enumerate}
\item
Let $F:S\to T$ be a surjective morphism. We extend $F$ to $F:S^1 \to T^1$ and verify that this is a semigroup morphism. Write $\bar{s}$ for $F(s)$. Suppose we have in $Cayley(S)$ the following
equation $\varphi_s([a_1,a_2,\ldots,a_n]) = [sa_1 , sa_1a_2 , \ldots
, sa_1a_2 \cdots a_n]$. Then, applying $t\mapsto \bar{t}$ gives us
$\varphi_{\overline{s}}([\overline{a_1},\overline{a_2},\ldots,\overline{a_n}]) = [\overline{sa_1},\overline{sa_1a_2},\ldots,\overline{sa_1a_2
\cdots a_n}]$. This shows that the mapping $s\mapsto \bar{s}$ can be
extended to $\varphi_s \mapsto \varphi_{\bar{s}}$. We now extend
this mapping to $Cayley(S)\to Cayley(T)$. It remains to show that this is well defined, i.e. if $\varphi_{s_n}\cdots\varphi_{s_2}\varphi_{s_1}=\varphi_{r_m}\cdots\varphi_{r_2}\varphi_{r_1}$ in $Cayley(S)$ then
$\varphi_{\overline{s_n}}\cdots\varphi_{\overline{s_2}}\varphi_{\overline{s_1}} = \varphi_{\overline{r_m}}\cdots\varphi_{\overline{r_2}}\varphi_{\overline{r_1}}$ in $Cayley(T)$. This follows from the previous calculation

\item
Denote by $C$ the subsemigroup of $Cayley(S)$ generated by
$\{\varphi_t|t \in T\}$. An element of $C$ acts on the tree
$(S^1)^*$.

Assume first that $T=T^1$. In this case $(T^1)^*$ is a subtree of $(S^1)^*$
and is therefore invariant under the action of $C$, the map sending an element of
$C$ to its restriction on $(T^1)^*$ is an onto morphism $C\to Cayley(T)$.

We now consider the case $T\neq T^1$ and write $1_{T^1}$ for the identity of $T^1$. We can identify $1_{T^1}$ with $1_{S^1}$, and think of $T^1$ as a subsemigroup of $S^1$, so again $(T^1)^*$ is a subtree of $(S^1)^*$, so $Cayley(T)$ is a quotient of $C$.
\end{enumerate}
\end{proof}

The above proof actually shows that $Cayley$ is a functor,
i.e. if $\psi:S \to T$ is a semigroup morphism, then $\psi$ induces
a $Cayley(\psi):Cayley(S)\to Cayley(T)$.

\begin{cor}
Let $S$ and $T$ be semigroups. If $S \prec T$ then $Cayley(S) \prec
Cayley(T)$.
\end{cor}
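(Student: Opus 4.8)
The plan is to deduce the corollary formally from the two parts of the preceding lemma together with the transitivity of the division relation. First I would unpack the hypothesis $S \prec T$. By definition of division there is a subsemigroup $U$ of $T$ and a surjective morphism $U \to S$; in other words, $S$ is a quotient of $U$ and $U$ is a subsemigroup of $T$. This decomposes the single assumption $S \prec T$ into one ``subsemigroup'' step and one ``quotient'' step, which is exactly the data the lemma is equipped to handle.

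Next I would apply the lemma in two stages, matching each step to the appropriate part. Since $U$ is a subsemigroup of $T$, part (2) of the lemma yields $Cayley(U) \prec Cayley(T)$. Since $S$ is a quotient of $U$, part (1) of the lemma gives that $Cayley(S)$ is a quotient of $Cayley(U)$, and in particular $Cayley(S) \prec Cayley(U)$.

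Finally I would invoke the transitivity of $\prec$ on finite semigroups: from the two relations $Cayley(S) \prec Cayley(U)$ and $Cayley(U) \prec Cayley(T)$ I conclude $Cayley(S) \prec Cayley(T)$, which is the assertion.

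The argument is essentially a formal composition, so I do not expect a genuine obstacle. The only points requiring care are bookkeeping: making sure the direction of division is fed into the correct part of the lemma (the subsemigroup $U \le T$ into part (2), and the quotient $U \to S$ into part (1)), and recalling that $\prec$ is transitive so that the two intermediate divisions chain together. The delicate handedness of the adjoined identity $S^1$ that appears in the proof of part (2) has already been absorbed into the lemma statements, so it need not be revisited here.
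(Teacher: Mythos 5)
Your proof is correct and matches the paper's intended argument: the paper states this corollary immediately after the lemma without further proof, treating it as exactly the composition you spell out, namely decomposing $S \prec T$ into a subsemigroup step (lemma part (2)) and a quotient step (lemma part (1)) and chaining them by transitivity of $\prec$. Your explicit bookkeeping of which part of the lemma handles which step is a faithful elaboration of what the paper leaves implicit.
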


\begin{rem}
Notice that if $S$ contains a zero element $0$, then $\varphi_0$ is
the zero element of $Cayley(S)$.
\end{rem}

\begin{rem}\label{generators are 1-1}
If $s\neq t$ then $\varphi_s \neq \varphi_t$.
\end{rem}

\begin{proof}
This is immediate by considering the action of both elements on the
word $[1]$.
\end{proof}

\subsection{Idempotent Semigroups}
Every $f \in Cayley(S)$ is a tree endomorphism, and therefore $f$ is
determined by its action on each node of the tree $(S^1)^*$ (where
the action is some function $S^1 \to S$). By \emph{the portrait}
of $f$ we mean the tree $(S^1)^*$, and for every node, a function
$S^1 \to S$. We mention an obvious but important fact, that two
elements of $Cayley(S)$ are equal if and only if the are decorated
with the same portrait. See \cite{grigplus2} for more on this
notation, and examples of how useful this is for proofs.

\begin{nota}
Denote by $l_s$ the function $l_s:S^1 \to S$ given by $l_s(t)=st$.
\end{nota}

Every element of $Cayley(S)$ is determined by its action
on the top level of the tree and by its restrictions to the
subtrees. Thus, every $f \in Cayley(S)$ can be represented by a pair $((f_{[s]})_{s \in S},l_s)$, where we recall the definition of $f_{[s]}$ as given in the fractalness section, and the fact that $f_{[s]} \in Cayley(S)$. If $f \in Cayley(S)$ is associated with the pair $((f_{[s]})_{s \in S},l_s)$, then $f([a_1,a_2,\ldots,a_n])=[l_s(a_1)]\circ f_{[sa_1]}([a_2,\ldots,a_n])$. It is easy to see that for generators, $\varphi_s \mapsto ((\varphi_{sx})_{x \in S},l_s)$, and that $(\varphi_t\varphi_s) \mapsto ((\varphi_{tsx}\varphi_{sx})_{x \in S},l_{ts})$.

We can then extend this to any product of generators,
$\varphi_{s_n}\ldots \varphi_{s_2} \varphi_{s_1} \mapsto ((\varphi_{s_n\cdots s_2s_1x}\cdots
\varphi_{s_2s_1x}\varphi_{s_1x})_{x \in S},l_{s_n \cdots
s_2s_1})$. 

\begin{lemma}
Let $S$ be an idempotent semigroup, Then $Cayley(S) \cong S$.
\end{lemma}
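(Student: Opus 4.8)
The plan is to prove that the map $\psi\colon S\to Cayley(S)$ sending $s$ to $\varphi_s$ is a semigroup isomorphism. Everything hinges on one identity: for a band $S$ one has $\varphi_t\varphi_s=\varphi_{ts}$ for all $s,t\in S$. Granting this, the rest is formal. Iterating gives $\varphi_{s_n}\cdots\varphi_{s_1}=\varphi_{s_n\cdots s_1}$, so the generating set $\{\varphi_s:s\in S\}$ is already closed under composition and therefore equals all of $Cayley(S)$; this makes $\psi$ surjective. The identity itself says exactly that $\psi$ is a homomorphism, since $\psi(t)\psi(s)=\varphi_t\varphi_s=\varphi_{ts}=\psi(ts)$. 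Finally $\psi$ is injective by the earlier remark that $s\neq t$ forces $\varphi_s\neq\varphi_t$. Hence $\psi$ is an isomorphism and $Cayley(S)\cong S$.

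So the proof reduces to establishing $\varphi_t\varphi_s=\varphi_{ts}$, which I would do by a direct computation read off the bottom row of the Pascal array. Set $b_j=sa_1a_2\cdots a_j$ for $1\le j\le n$, so that $\varphi_s([a_1,\ldots,a_n])=[b_1,\ldots,b_n]$. Applying $\varphi_t$ shows that the $i$-th letter of $\varphi_t\varphi_s([a_1,\ldots,a_n])$ is $t\,b_1b_2\cdots b_i$, whereas the $i$-th letter of $\varphi_{ts}([a_1,\ldots,a_n])$ is $ts\,a_1\cdots a_i=t\,b_i$. Thus it suffices to prove the purely semigroup-theoretic identity
\[
b_1b_2\cdots b_i=b_i\qquad(1\le i\le n).
\]

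The point is idempotency, and here is how I would run it. Every $b_j$ actually lies in $S$, not merely $S^1$: the leading factor $s$ is in $S$, and $S$ is closed under right multiplication by elements of $S^1$, so $b_j=sa_1\cdots a_j\in S$ no matter which $a_k$ equal $1$; in particular each $b_j$ is idempotent. Moreover $b_j=b_{j-1}a_j$. Now I induct on $i$: the base case is trivial, and for the step
\[
b_1\cdots b_{i-1}b_i=b_{i-1}b_i=b_{i-1}(b_{i-1}a_i)=b_{i-1}^2a_i=b_{i-1}a_i=b_i,
\]
using the induction hypothesis at the first equality and idempotency of $b_{i-1}$ at the fourth. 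This collapses the long product to its last factor, proving the identity and hence $\varphi_t\varphi_s=\varphi_{ts}$.

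I expect the only genuine subtlety to be the bookkeeping around $S^1$: the letters $a_k$ range over $S^1$, so one must make sure the telescoping never relies on $1$ being idempotent or on products leaving $S$. The observation that each $b_j\in S$ (because the leading factor already belongs to $S$) is precisely what disposes of this. The conceptual heart is just the collapse $b_1\cdots b_i=b_i$, which is special to bands and breaks down the moment $S$ has a nontrivial subgroup.
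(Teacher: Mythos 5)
Your proposal is correct and takes essentially the same route as the paper: both reduce the lemma to the identity $\varphi_t\varphi_s=\varphi_{ts}$, established by the same idempotent collapse of the telescoping products (the paper sets $b_i=a_1\cdots a_i$ and uses the observation $sb_ib_j=sb_j$ for $i\leq j$, which is exactly your $b_1b_2\cdots b_i=b_i$ with the leading $s$ absorbed into the $b_j$'s), and both then conclude via the remark that $s\mapsto\varphi_s$ is injective on generators. Your explicit treatment of the $S^1$ bookkeeping (noting each $b_j\in S$ because the leading factor $s$ lies in $S$) is a slightly more careful rendering of a point the paper passes over silently, but the argument is the same.
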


\begin{proof}




Let $s,t \in S$. Consider the word $w=[a_1,a_2,\ldots,a_n]$. We will show that $\varphi_t\varphi_s(w)=\varphi_{ts}(w)$.
Let $b_i=a_1a_2\cdots a_i$, and observe that since $S$ is an idempotent semigroup we have $sb_ib_j=sb_j$ when $i \leq j$ for every $s
\in S$.
We now calculate $\varphi_t\varphi_s(w) = \varphi_t([sa_1,sa_1a_2,\ldots,sa_1a_2\cdots a_n]) = \varphi_t([sb_1,sb_2,\ldots,sb_n]) = [tsb_1,tsb_1b_2,\ldots,tsb_1b_2\cdots b_n] = [tsb_1,tsb_2,\ldots,tsb_n] = \varphi_{ts}([a_1,a_2,\ldots,a_n])=\varphi_{ts}(w)$.

Since the action of the generators is faithful by Remark
\ref{generators are 1-1}, the mapping $s \mapsto \varphi_s$ is $1$-$1$
and unto $Cayley(S)$. This completes the proof.
\end{proof}

\begin{cor}
Let $S$ be a finite simple aperiodic semigroup. Then $Cayley(S)
\cong S$.
\end{cor}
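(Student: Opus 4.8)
The plan is to invoke the structure theory for finite simple aperiodic semigroups established in the Corollary to the Rees--Sushkevitch theorem, and then show that such a semigroup is idempotent, so that the already-proved Lemma ``$Cayley(S) \cong S$ for idempotent $S$'' applies directly. By part (1) of that Corollary, a finite simple aperiodic semigroup $S$ is isomorphic to $A \times B$ with the rectangular-band multiplication $(a_1,b_1)\cdot(a_2,b_2)=(a_1,b_2)$. First I would observe that in this semigroup every element is idempotent: indeed $(a,b)\cdot(a,b)=(a,b)$ by the multiplication rule. Hence $S$ is an idempotent semigroup.

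With idempotency in hand, the corollary is essentially immediate. The preceding Lemma asserts that whenever $S$ is idempotent, the map $s \mapsto \varphi_s$ extends to an isomorphism $S \to Cayley(S)$, using Remark~\ref{generators are 1-1} for faithfulness. Since a finite simple aperiodic semigroup is idempotent, applying that Lemma to our $S$ yields $Cayley(S) \cong S$ at once. So the entire proof reduces to the single sentence identifying $S$ as an (idempotent) rectangular band via the classification corollary.

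I expect no real obstacle here, since the Corollary does all the structural work and the idempotency check is a one-line computation. The only point requiring minor care is the logical dependency: the statement relies on part~(1) of the earlier Corollary, whose proof I am allowed to assume, and on the idempotent-semigroup Lemma. A short proof would read: ``By the Corollary, $S \cong A \times B$ with multiplication $(a_1,b_1)(a_2,b_2)=(a_1,b_2)$, so $(a,b)^2=(a,b)$ and $S$ is idempotent. The result now follows from the previous Lemma.'' The main subtlety worth flagging, should one want a self-contained argument, is simply confirming that the hypothesis ``simple'' (rather than merely ``$0$-simple'') is what guarantees the matrix $C$ contains no zeros and hence that multiplication never produces a zero element—ensuring $S$ really is the full rectangular band $A \times B$ and thus genuinely idempotent.
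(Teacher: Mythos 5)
Your proposal is correct and follows exactly the paper's own route: identify $S$ as a rectangular band $A \times B$ via part (1) of the corollary to Rees--Sushkevitch, note that $(a,b)^2=(a,b)$ makes $S$ idempotent, and apply the lemma that $Cayley(S)\cong S$ for idempotent semigroups. Your extra remark about simplicity (versus $0$-simplicity) forcing the structure matrix to have no zeros matches the reasoning already given in the paper's proof of that corollary.
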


\begin{proof}
As seen before, a simple aperiodic semigroup is of the form $A
\times B$, with $A$ a left zero semigroup, and $B$ a right zero
semigroup. (i.e. $(a_1,b_1)(a_2,b_2)=(a_1,b_2)$), and in particular
is an idempotent semigroup. By the above it immediately follows that
$Cayley(S) \cong S$.
\end{proof}

\subsection{Nilpotent Semigroups}
We recall here the definition of a nilpotent semigroup. We say that a semigroup
with zero is nilpotent of index $n$ if for every $m\geq n$ we have $s_m\cdots s_2s_1=0$
for every $s_m,\ldots,s_2,s_1 \in S$, and $n$ is the smallest with this property. The following observation shows that a finitely generated nilpotent semigroup is finite. If $S$ is such a semigroup, $X$ a
generating set and $n$ the nilpotency index, then $|S| \leq
\frac{|X|^n-1}{|X|-1}+1$ (unless $|X|=1$ in which case the semigroup has $n$ elements).

Here we will see another example where $S$ is similar to
$Cayley(S)$.

\begin{lemma}
If $S$ is a finite nilpotent semigroup of index $n$ then $Cayley(S)$
is nilpotent with nilpotency index at most $n$.
\end{lemma}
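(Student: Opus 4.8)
The plan is to show that every product of at least $n$ generators of $Cayley(S)$ equals $\varphi_0$. Since $S$ is nilpotent it has a zero, and by the Remark above $\varphi_0$ is the zero element of $Cayley(S)$. Because every element of $Cayley(S)$ is a product of at least one generator, a product of $n$ elements of $Cayley(S)$ is a product of at least $n$ generators; hence showing that any such product collapses to $\varphi_0$ immediately gives that $Cayley(S)$ is nilpotent of index at most $n$.

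First I would fix generators $\varphi_{s_m}, \ldots, \varphi_{s_1}$ with $m \geq n$ and an arbitrary input word $[a_1, \ldots, a_k] \in (S^1)^*$, and read off its image as the bottom row $t_{m1}, t_{m2}, \ldots, t_{mk}$ of the Pascal array. The proof rests on two small inductions. The first is that each entry $t_{mj}$ has the first-column entry $t_{m1}$ as a left factor: the defining relation $t_{mj} = t_{m(j-1)} \cdot t_{(m-1)j}$ exhibits $t_{m(j-1)}$ as a left factor of $t_{mj}$, and descending in $j$ shows $t_{m1}$ is a left factor of every $t_{mj}$. The second, running up the first column via $t_{m1} = s_m \cdot t_{(m-1)1}$ with base case $t_{11} = s_1 a_1$, shows $t_{m1} = s_m s_{m-1} \cdots s_1 a_1$.

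Combining the two, every bottom-row entry can be written $t_{mj} = (s_m s_{m-1} \cdots s_1)\, a_1 u$ for some $u \in S^1$. Now $s_m \cdots s_1$ is a product of $m \geq n$ elements of $S$, so nilpotency of index $n$ forces $s_m \cdots s_1 = 0$, and since $0$ remains a zero in $S^1$ we get $t_{mj} = 0$ for all $j$. Thus $\varphi_{s_m} \cdots \varphi_{s_1}([a_1, \ldots, a_k]) = [0, \ldots, 0] = \varphi_0([a_1, \ldots, a_k])$; as this holds for every input word (both functions fixing $\epsilon$), we conclude $\varphi_{s_m} \cdots \varphi_{s_1} = \varphi_0$, as required.

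I do not expect a serious obstacle: the whole statement reduces to the single identity $t_{mj} = (s_m \cdots s_1)\, a_1 u$, and the only care needed is in bookkeeping — keeping the two inductions (down the row, then up the column) separate and checking that $0$ absorbs on the left even when $a_1 = 1 \in S^1$. One could run the same argument through the recursive portrait representation $\varphi_{s_m}\cdots\varphi_{s_1} \mapsto ((\varphi_{s_m \cdots s_1 x} \cdots \varphi_{s_1 x})_{x \in S},\, l_{s_m \cdots s_1})$: here $s_m \cdots s_1 = 0$ makes the top action $l_0$ and the leftmost generator in each subtree $\varphi_{0x} = \varphi_0$, which absorbs the rest, so the portrait is exactly that of $\varphi_0$.
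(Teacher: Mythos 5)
Your proof is correct. Your main argument does differ in presentation from the paper's: the paper works entirely in the recursive portrait representation, writing a product of $n$ generators as $((\prod_{i}\varphi_{b_i})_{x\in S},\, l_{s_n\cdots s_1})$ and observing that since each subscript is a product of at least $n$ elements of the nilpotent semigroup, every node of the portrait carries $l_0$, which is precisely the portrait of $\varphi_0$; it then separately notes that a finitely generated nilpotent semigroup is finite. You instead run a flat, word-level computation in the Pascal array: the column induction gives $t_{m1}=s_m\cdots s_1 a_1$, the row induction shows $t_{m1}$ is a left factor of every bottom-row entry, and nilpotency kills $s_m\cdots s_1$, so the output is the zero string and the product equals $\varphi_0$. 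The two arguments encode the same computation (the portrait is just the recursive reading of the Pascal array), but yours is more self-contained and explicit --- it handles all lengths $m\geq n$ directly, verifies the $a_1=1$ edge case, and needs no appeal to the portrait-equality criterion --- while the paper's version is shorter once the wreath-product notation is in place. Your closing paragraph essentially reproduces the paper's own proof, correctly noting that the leading factor $\varphi_{0x}=\varphi_0$ absorbs the rest of each subtree product. One scope remark: the paper's proof also records finiteness of $Cayley(S)$ as a consequence; the statement as posed only asks for nilpotency of index at most $n$, which you fully establish.
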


\begin{proof}
We use the wreath product notation. Let $S$ be a nilpotent semigroup
of index $n$. Let $\Pi_{i=1}^n \varphi_{s_i} = ((\Pi_{i=1}^n
\varphi_{b_i})_{x \in S},l_0)$ be a product of $n$ generators, for some $b_i$ who depend on $x$.
Since every $b_i$ is a product of at least one element of $S$, the
portrait is decorated by an $l_0$ in each node.

Since $S$ is finite,  $Cayley(S)$ is a finitely generated nilpotent
semigroup, and therefore is finite
\end{proof}

Recall that a monogenic semigroup is a semigroup which is generated
by one element.

\begin{cor}
If $S$ is a monogenic aperiodic semigroup, then $Cayley(S)$ is
nilpotent.
\end{cor}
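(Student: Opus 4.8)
The plan is to reduce the statement to the preceding lemma by showing that a (finite) monogenic aperiodic semigroup is itself nilpotent, and then invoke the result that $Cayley(S)$ is nilpotent whenever $S$ is finite and nilpotent. So the entire content lies in analysing $S$, not $Cayley(S)$ directly.

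First I would recall the classification of finite monogenic semigroups. If $S = \langle x \rangle$ is finite, there are a least index $i \geq 1$ and a period $p \geq 1$ with $x^{i+p} = x^i$, and the ``cyclic part'' $\{x^i, x^{i+1}, \ldots, x^{i+p-1}\}$ is a subgroup of $S$ isomorphic to the cyclic group of order $p$. Since $S$ is aperiodic it has no nontrivial subgroup, and this forces $p = 1$, i.e. $x^{i+1} = x^i$.

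Next I would observe that the relation $x^{i+1} = x^i$ propagates to $x^{i+k} = x^i$ for all $k \geq 0$, whence $x^a x^b = x^{a+b} = x^i$ whenever $a + b \geq i$. In particular $x^i$ is a zero element of $S$, and any product of $i$ generators (each being a power $x^{j}$ with $j \geq 1$) has exponent at least $i$ and therefore equals $x^i$. Thus $S$ is nilpotent with nilpotency index at most $i$. Applying the previous lemma to the finite nilpotent semigroup $S$ then yields that $Cayley(S)$ is nilpotent, completing the argument.

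I do not expect a serious obstacle here; the proof is essentially a repackaging of the structure theorem for monogenic semigroups, with aperiodicity doing the single crucial job of collapsing the period to $1$. The one point demanding care is the standing finiteness assumption: an infinite monogenic semigroup is the free monogenic semigroup $(\mathbb{N}, +)$, which is aperiodic yet has no zero and is not nilpotent, so the lemma being invoked (and hence the corollary) genuinely requires $S$ to be finite. Provided that is in force, everything else is immediate.
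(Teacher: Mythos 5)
Your proof is correct and takes essentially the same route the paper intends: the corollary is stated immediately after the lemma on nilpotent semigroups, and the paper's own (implicit) justification is precisely your observation that aperiodicity collapses the period of a finite monogenic semigroup to $1$, so that $S=\{x,x^2,\ldots,x^n\}$ with $x^n=x^{n+1}$ has zero $x^n$ and is nilpotent of index at most $n$, whence $Cayley(S)$ is nilpotent by that lemma. Your closing caveat about finiteness is also apt, since the paper works with finite semigroups throughout and the free monogenic semigroup shows the hypothesis cannot be dropped.
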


A monogenic finite aperiodic semigroup is of the form
$\{x,x^2,\ldots,x^n\}$ satisfying the relation $x^n=x^{n+1}$ for
some $n\in \mathbb N$. Obviously, a monogenic semigroup is
commutative.

In general (i.e. $|S|$ not too small) if $S$ is monogenic, then
$Cayley(S)$ is not monogenic and not even commutative. For example
let $S=<x|x^5=x^6>$. In $Cayley(S)$ we have the following
computation.
$\varphi_x\varphi_{x^2}([1,1])=\varphi_x([x^2,x^2])=[x^3,x^5]$ and
$\varphi_{x^2}\varphi_x([1,1])=\varphi_{x^2}([x,x])=[x^3,x^4]$.

\subsection{Basic Structure}
We end this section with some structure facts about $Cayley(S)$.
Recall that the definition of a monoid acting on a set does not
require the identity of the monoid to have a trivial action.

\begin{thm}
Let $S$ be an aperiodic monoid, and $1$ the identity of $S$. The
following are equivalent.
\begin{enumerate}
\item
$Cayley(S)$ is a monoid, and the identity of $Cayley(S)$ acts
trivially on $S^*$.
\item
$\varphi_1$ has a trivial action on $S^*$.
\item
$S=\{1\}$.
\end{enumerate}
\end{thm}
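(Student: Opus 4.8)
The plan is to split the three-way equivalence into the easy implications $(3)\Rightarrow(1)$, $(3)\Rightarrow(2)$, $(2)\Rightarrow(3)$, together with the one substantial implication $(1)\Rightarrow(3)$; this gives $(1)\Leftrightarrow(3)$ and $(2)\Leftrightarrow(3)$, hence the full equivalence. The first thing I would record is that $(1)$ is really just the assertion $\mathrm{id}_{S^*}\in Cayley(S)$: since $Cayley(S)$ is a semigroup of self-maps of $S^*$ under composition, any two-sided identity $e$ satisfies $e\circ f=f$, and the extra hypothesis that $e$ act trivially on $S^*$ forces $e=\mathrm{id}_{S^*}$; conversely $\mathrm{id}_{S^*}$ is automatically a trivially-acting identity whenever it lies in the semigroup.

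The trivial directions are quick. For $(3)\Rightarrow(1)$ and $(3)\Rightarrow(2)$: if $S=\{1\}$ then every word of $S^*$ is $[1,\dots,1]$, all partial products equal $1$, so $\varphi_1=\mathrm{id}_{S^*}$ and $Cayley(S)=\{\mathrm{id}_{S^*}\}$, a monoid whose identity acts trivially. For $(2)\Rightarrow(3)$: assuming $\varphi_1$ acts trivially, evaluate it on the length-two word $[s,1]$ for an arbitrary $s\in S$. Since $\varphi_1([s,1])=[1\cdot s,\,1\cdot s\cdot 1]=[s,s]$, triviality forces $[s,s]=[s,1]$, i.e. $s=1$; as $s$ was arbitrary, $S=\{1\}$.

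For the crux $(1)\Rightarrow(3)$, I would use the reformulation above and write $\mathrm{id}_{S^*}=\varphi_{s_n}\cdots\varphi_{s_1}$. Fix an arbitrary $x\in S$ and evaluate this product on the test word $[x,1]$. Setting $q_j=s_j\cdots s_1$, a short induction on $j$ (this is exactly the last column of the Pascal array, or the fractalness recursion) shows
\[
\varphi_{s_j}\cdots\varphi_{s_1}([x,1])=\big[\,q_j x,\ (q_j x)(q_{j-1}x)\cdots(q_1 x)\,\big].
\]
Taking $j=n$ and comparing with $\mathrm{id}_{S^*}([x,1])=[x,1]$ yields two equations: $q_n x=x$, which (at $x=1$) gives $q_n=1$; and $(q_n x)(q_{n-1}x)\cdots(q_1 x)=1$. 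Since $q_n=1$ the leading factor is $x$, so this reads $x\cdot w=1$ with $w=(q_{n-1}x)\cdots(q_1 x)\in S$. Thus every $x\in S$ is right invertible.

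It then remains to close with finite-monoid structure. From $xw=1$ one gets $x^iw^i=1$ for all $i$ (using $x^{i+1}w^{i+1}=x^i(xw)w^i=x^iw^i$), and finiteness gives $x^i=x^j$ for some $i<j$, whence $1=x^jw^i=x^{j-i}x^iw^i=x^{j-i}$. So $x$ generates a nontrivial finite cyclic group unless $x=1$; aperiodicity rules out nontrivial subgroups, forcing $x=1$. As $x$ was arbitrary, $S=\{1\}$, which is $(3)$. The main obstacle here is conceptual rather than computational: recognizing that "$\mathrm{id}_{S^*}\in Cayley(S)$" should be probed by the single well-chosen word $[x,1]$, whose output second coordinate cleanly isolates the right-invertibility constraint $x\cdot w=1$; once that translation is made, the bookkeeping is routine and the finite-aperiodic fact finishes the argument.
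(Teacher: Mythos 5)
Your proposal is correct, and while both arguments ultimately probe the identity with the test word $[x,1]$, your route through the crucial implication $(1)\Rightarrow(3)$ is genuinely different from the paper's. The paper first applies the identity $f=\varphi_{s_n}\cdots\varphi_{s_1}$ to the word $[1]$ to get $s_n\cdots s_1=1$, then invokes the structural fact that each $s_i$ is then $\mathcal J$-related to $1$ and that in a finite aperiodic monoid this forces $s_i=1$ (the $\mathcal J$-class of the identity is the group of units, trivial by aperiodicity); this collapses $f$ to $\varphi_1^n$, and the single computation $\varphi_1^n([x,1])=[x,x^n]$ yields $x^n=1$, hence $x=1$. You bypass this Green's-relations lemma entirely: computing the full product on $[x,1]$ via $q_j=s_j\cdots s_1$, you extract $q_n=1$ and the right-invertibility relation $xw=1$, then finish with the elementary chain $x^iw^i=1$, $x^i=x^j$, $x^{j-i}=1$, so aperiodicity kills $x$. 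In effect you reprove by hand the special case you need (a right-invertible element of a finite aperiodic monoid equals $1$), trading the paper's shorter, more structural argument for a self-contained one that needs no $\mathcal J$-class machinery; your direct $(2)\Rightarrow(3)$ on the word $[s,1]$ likewise replaces the paper's cyclic chain $(3)\Rightarrow(2)\Rightarrow(1)\Rightarrow(3)$. Two minor points worth tidying: when $n=1$ your $w$ is an empty product, in which case either read $w=1$ (the empty product in a monoid) or note that the second coordinate then directly gives $x=1$; and, as in the paper's own proof, you are using finiteness of $S$ (for $x^i=x^j$), which is the paper's standing assumption even though the theorem's statement says only ``aperiodic monoid.''
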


\begin{proof}
$(3)$ implies $(2)$ implies $(1)$ is obvious. Assume that
$Cayley(S)$ has an identity which acts trivially on $S^*$. Let
$f=\varphi_{s_n}\cdots\varphi_{s_2}\varphi_{s_1}$ be the identity.
We have $[1]=f([1])=[s_n\cdots s_2s_1]$. This shows that $s_i
\mathcal J 1$, and in a finite aperiodic semigroup, this implies
$s_i=1$. We now have $f=\varphi_1^n$. Consider the word $[x,1]$.
$[x,1]=\varphi_1^n([x,1])=[x,x^n]$. Therefore, $x^n=1$, and by
aperiodicity $x=1$. This shows that $S=\{1\}$.
\end{proof}

\begin{thm}
Let $S$ be an aperiodic monoid, and $1$ the identity of $S$. The
following are equivalent.
\begin{enumerate}
\item
$\varphi_1$ is an idempotent.
\item
$\varphi_1$ is regular.
\item
$S$ is an idempotent monoid.
\end{enumerate}
\end{thm}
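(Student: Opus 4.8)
The plan is to prove the cyclic chain of implications, disposing of the two easy steps first and then concentrating on $(2)\Rightarrow(3)$, which carries the real content. For $(3)\Rightarrow(1)$ I would simply invoke the earlier lemma that $Cayley(S)\cong S$ when $S$ is idempotent: under that isomorphism $\varphi_s\varphi_t=\varphi_{st}$, so $\varphi_1\varphi_1=\varphi_{1\cdot 1}=\varphi_1$ and $\varphi_1$ is idempotent. The implication $(1)\Rightarrow(2)$ is immediate, since any idempotent $e$ satisfies $e\cdot e\cdot e=e$ and is therefore regular (take $y=e$ in $xyx=x$).

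The substance is $(2)\Rightarrow(3)$. Suppose $\varphi_1$ is regular, so that $\varphi_1 g\varphi_1=\varphi_1$ for some $g\in Cayley(S)$, and write $g=\varphi_{t_m}\cdots\varphi_{t_1}$ with each $t_i\in S$ and $m\ge 1$. The first step is to evaluate both sides on the one-letter word $[1]$. Since $\varphi_1([1])=[1]$ and $\varphi_{t_m}\cdots\varphi_{t_1}([1])=[t_m\cdots t_1]$, the relation forces $t_m\cdots t_1=1$. Aperiodicity now enters decisively: a product of elements of $S$ equal to the identity $1$ forces every factor to be $1$, because each $t_i$ then satisfies $1\in S^1 t_i S^1$, hence $t_i\mathrel{\mathcal J}1$, and the $\mathcal J$-class of $1$ in a finite monoid is the group of units, which is trivial by aperiodicity. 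Thus every $t_i=1$, so $g=\varphi_1^{\,m}$, and the regularity relation collapses to $\varphi_1=\varphi_1\varphi_1^{\,m}\varphi_1=\varphi_1^{\,m+2}$.

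Finally I would read off the consequence on length-two words. A direct computation gives $\varphi_1^{\,k}([a,1])=[a,a^{k}]$ for every $a\in S$ and $k\ge 1$, so $\varphi_1=\varphi_1^{\,m+2}$ yields $a=a^{\,m+2}$ for all $a\in S$. Since $m+2\ge 2$, the element $a$ lies in the maximal subgroup of the monogenic subsemigroup $\langle a\rangle$; aperiodicity of $S$ forces that subgroup to be trivial, whence $a=a^2$. Therefore $S$ is an idempotent monoid, which is $(3)$, and the cycle is complete.

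The main obstacle is the reduction $g=\varphi_1^{\,m}$: one must rule out any auxiliary factors in the pseudo-inverse $g$, and this is exactly where aperiodicity is indispensable. In a monoid with nontrivial units (for instance $\mathbb Z_2$) a product can equal $1$ without all factors being $1$, and there $\varphi_1$ is in fact regular — indeed invertible — while $S$ is not idempotent, so the hypothesis cannot be dropped. I note that the whole argument reduces to aperiodicity of $S$ itself together with the action on words of length at most two, so it does not invoke the (not-yet-established) finiteness and aperiodicity of $Cayley(S)$.
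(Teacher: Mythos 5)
Your proposal is correct and takes essentially the same route as the paper: both handle $(3)\Rightarrow(1)\Rightarrow(2)$ via the earlier lemma that $Cayley(S)\cong S$ for idempotent $S$, and both prove $(2)\Rightarrow(3)$ by evaluating $\varphi_1 g\varphi_1=\varphi_1$ on the word $[1]$ to force all factors of $g$ to equal $1$ (so $g=\varphi_1^m$), then on $[x,1]$ to obtain $x=x^{m+2}$ and hence $x=x^2$ by aperiodicity. Your explicit justification that $t_i\,\mathcal{J}\,1$ implies $t_i=1$, and the $\mathbb{Z}_2$ counterexample showing aperiodicity is needed, merely spell out details the paper leaves implicit.
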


\begin{proof}
We saw that if $S$ is idempotent, then $S \cong Cayley(S)$. This
shows that $(3)$ implies $(1)$ and $(1)$ implies $(2)$. Now suppose
$\varphi_1$ is regular. For some $f \in Cayley(S)$,
$\varphi_1f\varphi_1=\varphi_1$. Considering the word $[1]$, we have
$\varphi_1f\varphi_1([1])=\varphi_1([1])=[1]$. If we write $f=\varphi_{s_n}\cdots\varphi_{s_2}\varphi_{s_1}$, then $\varphi_1 f\varphi_1([1])=[s_n\cdots s_2s_1]$, thus $s_n\cdots s_2s_1=1$. In an aperiodic semigroup, this implies $s_n=\cdots=s_2=s_1=1$, and so $f=\varphi_1^n$ for some $n$. This gives us
$\varphi_1=\varphi_1^{n+2}$.Applying both functions on $[x,1]$ gives
$[x,x]=[x,x^{n+2}]$. This gives $x=x^{n+2}$ and by aperiodicity,
$x=x^2$ for every $x \in S$.
\end{proof}

\begin{lemma}
$Cayley(S \times T)$ is a subsemigroup of  $Cayley(S) \times
Cayley(T)$.
\end{lemma}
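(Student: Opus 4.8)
The plan is to realize the embedding as the pairing of the two coordinate projections. Let $\pi_S : S \times T \to S$ and $\pi_T : S \times T \to T$ be the coordinate projections; both are surjective morphisms. By the first part of the functoriality lemma above (together with the remark that $Cayley$ is a functor), each induces a homomorphism $Cayley(\pi_S) : Cayley(S\times T) \to Cayley(S)$ and $Cayley(\pi_T) : Cayley(S\times T) \to Cayley(T)$, acting on generators by $\varphi_{(s,t)} \mapsto \varphi_s$ and $\varphi_{(s,t)} \mapsto \varphi_t$. The pairing
\[
\Phi = (Cayley(\pi_S),\, Cayley(\pi_T)) : Cayley(S\times T) \longrightarrow Cayley(S) \times Cayley(T)
\]
is then automatically a homomorphism, sending $\varphi_{(s,t)} \mapsto (\varphi_s, \varphi_t)$. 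It remains only to prove that $\Phi$ is injective.

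For injectivity I would exploit the naturality of the Cayley construction with respect to the action, which is exactly what the proof of the functoriality lemma establishes: if $F : U \to V$ is a morphism and we write $\overline{\,\cdot\,}$ for the entrywise application of the extension $F^1 : U^1 \to V^1$ to a word, then $\overline{f(w)} = Cayley(F)(f)(\bar w)$ for every $f \in Cayley(U)$ and every $w$. This holds for generators by the displayed computation there and for arbitrary products by induction on the length of $f$. I would apply this twice, once with $F = \pi_S$ and once with $F = \pi_T$.

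Now suppose $\Phi(f) = \Phi(g)$, so that $Cayley(\pi_S)(f) = Cayley(\pi_S)(g)$ and $Cayley(\pi_T)(f) = Cayley(\pi_T)(g)$. Fix any $w \in ((S\times T)^1)^*$ and write $\pi_S$ also for the entrywise application of the extended projection $(S\times T)^1 \to S^1$. Naturality gives
\[
\pi_S(f(w)) = Cayley(\pi_S)(f)(\pi_S(w)) = Cayley(\pi_S)(g)(\pi_S(w)) = \pi_S(g(w)),
\]
and likewise $\pi_T(f(w)) = \pi_T(g(w))$. But $f(w)$ and $g(w)$ are words over $S \times T$ of the same length (tree endomorphisms preserve length), and an element of $S \times T$ is determined by its two coordinates; since the two words agree entrywise after applying $\pi_S$ and after applying $\pi_T$, they coincide. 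As $w$ was arbitrary, $f = g$, so $\Phi$ is injective and $Cayley(S\times T)$ embeds as a subsemigroup of $Cayley(S) \times Cayley(T)$.

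I expect the only genuine subtlety — the main thing to get right — to be the bookkeeping around the adjoined identities, namely checking that each coordinate projection extends to a well-defined monoid morphism $(S\times T)^1 \to S^1$ sending the external identity (in the case where $S\times T$ is not already a monoid) to $1 \in S^1$, so that both the naturality statement and the entrywise projection of words are meaningful. Once that is in place the argument is purely formal, since it reduces to the two facts that $Cayley$ is a functor and that a point of a direct product is determined by its projections.
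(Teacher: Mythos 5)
Your proposal is correct and takes essentially the same approach as the paper: the embedding is the same map $\varphi_{(s,t)} \mapsto (\varphi_s,\varphi_t)$, and your naturality identity $\pi_S(f(w)) = Cayley(\pi_S)(f)(\pi_S(w))$ is precisely the entrywise-projection observation the paper uses for well-definedness. If anything yours is slightly more complete, since the paper verifies well-definedness and the morphism property directly but leaves injectivity (that a word over $S\times T$ is recovered from its two coordinate projections) implicit, whereas you spell it out.
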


\begin{proof}
$\Phi:Cayley(S \times T) \to Cayley(S) \times Cayley(T)$ defined by
$\Phi:\Pi_{i=1}^n\varphi_{(s_i,t_i)} \mapsto
(\Pi_{i=1}^n\varphi_{s_i},\Pi_{i=1}^n\varphi_{t_i})$ maps $Cayley(S
\times T)$ onto the subsemigroup of $Cayley(S) \times Cayley(T)$ of
pairs of functions which can be written as elements of the same
length. To see that this is well defined let $f \in Cayley(S \times T)$ map the string $w$ to $w'=f(w)$,
where $w,w' \in ((S \times T)^{1})^*$. If we denote by $w_S$ the string obtained from $w$ by replacing every pair
$(s_i,t_i)$ with $s_i$, then we can see that $\Phi(f)$ maps $w_S$ to $w'_S$, regardless of the presentation of $f$ as
a product of generators. The same argument works for $T$.

To see that this is a morphism, let $f=\Pi_{i=1}^n\varphi_{(s_i,t_i)}$, and $g=\Pi_{i=1}^m\varphi_{(s'_i,t'_i)}$. We have $\Phi(f)=(\Pi_{i=1}^n\varphi_{s_i},\Pi_{i=1}^n\varphi_{t_i})$, and $\Phi(g)=(\Pi_{i=1}^m\varphi_{s'_i},\Pi_{i=1}^m\varphi_{t'_i})$. Thus
\begin{align*}
&\Phi(f)\Phi(g)=\\
&(\Pi_{i=1}^n\varphi_{s_i},\Pi_{i=1}^n\varphi_{t_i}) (\Pi_{i=1}^m\varphi_{s'_i},\Pi_{i=1}^m\varphi_{t'_i}) = \\
&(\Pi_{i=1}^n\varphi_{s_i}\Pi_{i=1}^m\varphi_{s'_i},\Pi_{i=1}^n\varphi_{t_i}\Pi_{i=1}^m\varphi_{t'_i}) = \\ &\Phi(fg).
\end{align*}
\end{proof}

\section{The Action on Minimal Ideals}

We assume henceforth that all semigroups have an identity and a
zero. We do not lose any generality for the main results since
$Cayley(S) \prec Cayley(S^1)$ and $Cayley(S) \prec Cayley(S^0)$.

Let $S$ be a finite aperiodic semigroup, and $I$ a $0$-minimal ideal. We will show
that the restriction of $Cayley(S)$ to $I$ is finite and aperiodic.

\begin{nota}
$Cayley(S,I)$ will denote the restriction of $Cayley(S)$ to $I^*$, i.e.
$Cayley(S,I)$ is the semigroup of functions $I^* \to I^*$,
generated by the functions $\{\varphi_s\}_{s \in S}$.

More generally, $Cayley(S,T)$ will denote the restriction of
$Cayley(S)$ to the tree $T^*$ for some ideal $T$ of $S$.
\end{nota}

\subsection{The Rhodes Expansion}
Let $S$ be a finite semigroup. The right Rhodes expansion $\hat S$
was introduced by Rhodes in 1969 \cite{rhodes expansion}.

Consider the set $M(S)$ of all strings in $S^*$ of the form
$[s_1,s_1s_2,s_1s_2s_3,\ldots,s_1s_2\cdots s_n]$. Define a
multiplication on $M(S)$ by
$$[s_1,\ldots,s_1s_2\cdots s_n][t_1,\ldots,t_1t_2\cdots t_m]=[s_1,\ldots,s_1s_2\cdots s_n,s_1\cdots s_nt_1,\ldots,s_1\cdots s_nt_1 \cdots t_m]$$
Consider the case where $s_1\cdots s_{i-1}s_i \mathcal R s_1\cdots
s_{i-1}$. Equivalently, for some $x \in S$, $s_1\cdots s_{i-1}s_ix
=  s_1\cdots s_{i-1}$. A \emph{one step reduction} replaces \\
$w=[s_1,\ldots,s_1\cdots s_{i-1},s_1\cdots
s_{i-1}s_i,\ldots,s_1\cdots s_n]$ with \\$w' = [ s_1 , \ldots , s_1
\cdots s_{i-2} , s_1 \cdots s_{i-1} s_i , \ldots , s_1 \cdots s_n]$.
If no such $i$ exists, $w$ is called reduced. It is easy to see that
every element $w$ of $M(S)$ has a unique reduced form $red(w)$
obtained from $w$ by applying finitely many one step reductions.

\begin{defn}
The right Rhodes expansion of a finite semigroup $S$ is the
collection of reduced words in $M(S)$ with multiplication $w\cdot w'
= red(ww')$. We denote it by $\widehat S^R$.
\end{defn}

The Rhodes expansion is closely related to the following
construction. 

\subsection{The Memory Semigroup of $S$}

Let $S$ be a finite semigroup. Define a multiplication on the set $S
\times P(S)$ by
$$(s,\alpha)\cdot(t,\beta)=(st,\alpha t \cup \{t\} \cup \beta)$$
where, if $\alpha \in P(S)$ and $t \in S$, then $\alpha t=\{xt|x \in
\alpha\}$. We will first verify that this multiplication is
associative.

$[(s,\alpha)(t,\beta)](x,\gamma)=(st,\alpha t \cup \{t\} \cup
\beta)(x,\gamma)=(stx,\alpha tx \cup \{tx\} \cup \beta x \cup
\{x\} \cup \gamma)$\\
$(s,\alpha)[(t,\beta)(x,\gamma)]=(s,\alpha)(tx,\beta x \cup \{x\}
\cup \gamma)=(stx,\alpha tx \cup \{tx\} \cup \beta x \cup \{x\} \cup
\gamma)$.

\begin{defn}
We call the above the (right) memory semigroup of $S$, and denote it
by $mem(S)$.
\end{defn}

Note that $f:mem(S)\to S$,$f(s,\alpha)=s$ is a surjective morphism.
Thus $S \prec mem(S)$.

The finiteness of $mem(S)$ is trivial. Furthermore, we observe the
following.
\begin{lemma}
Let $S$ be a finite semigroup. Then $mem(S)$ is aperiodic if and
only $S$ is.
\end{lemma}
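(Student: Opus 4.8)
The plan is to prove both directions by relating the subgroups of $mem(S)$ to those of $S$ via the projection $f:mem(S) \to S$ given by $f(s,\alpha) = s$, which we already know to be a surjective morphism. Since $S \prec mem(S)$, one direction is essentially immediate: if $mem(S)$ is aperiodic then so is $S$, because a nontrivial subgroup of $S$ would lift to a nontrivial subgroup of $mem(S)$ (aperiodicity is inherited by divisors, and $f$ is onto so any group in $S$ is a quotient of a group in $mem(S)$). So the content is the forward direction: assuming $S$ is aperiodic, show $mem(S)$ is aperiodic.

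For the main direction I would use the finite-semigroup criterion for aperiodicity: it suffices to show that every element $(s,\alpha)$ of $mem(S)$ satisfies $(s,\alpha)^n = (s,\alpha)^{n+1}$ for some uniform $n$. First I would compute powers explicitly. Writing out the multiplication, one sees that the first coordinate of $(s,\alpha)^k$ is just $s^k$, and the second coordinate accumulates: $(s,\alpha)^k = (s^k, \alpha_k)$ where $\alpha_k$ is built from the sets $\alpha s^{j}$, the singletons $\{s^j\}$, and copies of $\alpha$ for $1 \le j \le k-1$. The key observation is that both coordinates stabilize. Since $S$ is finite and aperiodic, there is an $m$ with $s^m = s^{m+1}$ for all $s \in S$, which controls the first coordinate. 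For the second coordinate, the sets $\alpha s^j$ and $\{s^j\}$ range over the subset $\{s^j : j \ge 1\}$ of $S$, which is finite and eventually constant once $j$ reaches the index $m$; since $P(S)$ is closed under union and the accumulated set can only grow as a subset of the finite set $S$, the union must stabilize after at most $|S|$ steps. Thus choosing $n$ large enough (say $n = |S| + m$) forces $(s,\alpha)^n = (s,\alpha)^{n+1}$ in both coordinates simultaneously.

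The main obstacle, and the step requiring the most care, is getting the explicit closed form for the second coordinate $\alpha_k$ of $(s,\alpha)^k$ correct, because the multiplication rule $(s,\alpha)(t,\beta) = (st, \alpha t \cup \{t\} \cup \beta)$ mixes a right-translate of the left factor's set with the right factor's set plus a new singleton, so the bookkeeping across $k$ factors is where an error would creep in. I would verify the base cases $k=2,3$ against the associativity computation already displayed in the excerpt, then prove the general formula by induction on $k$. Once the formula is in hand, showing that the growing union of subsets of the finite set $S$ must stabilize, and that the translates $\alpha s^j$ stabilize because $s^j$ stabilizes by aperiodicity, is routine.

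Finally, I would assemble the two directions into the biconditional statement: $S \prec mem(S)$ gives the "only if" (aperiodicity of $mem(S)$ descends to $S$), and the explicit stabilization argument gives the "if". I would phrase the whole argument in terms of the elementwise power criterion $s^n = s^{n+1}$ rather than in terms of subgroups, since for finite semigroups these are equivalent and the power criterion is what the computation naturally delivers.
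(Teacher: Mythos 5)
Your proposal is correct and follows essentially the same route as the paper: the paper's proof is a one-line induction giving the closed form $(s,\alpha)^n=(s^n,\,\alpha\cup\alpha\{s,\ldots,s^{n-1}\}\cup\{s,\ldots,s^{n-1}\})$, from which $s^n=s^{n+1}$ for all $s$ yields $(s,\alpha)^{n+1}=(s,\alpha)^{n+2}$, with the converse exactly as you argue, from $S \prec mem(S)$. Your only deviation is cosmetic: once $s^m=s^{m+1}$, the accumulated set is already constant from exponent $m+1$ on, so your extra $|S|$-step buffer (taking $n=|S|+m$) is harmless but unnecessary.
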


\begin{proof}
A simple induction shows that $(s,\alpha)^n=(s^n,\alpha \cup
\alpha\{s,s^2,\ldots,s^{n-1}\}\cup \{s,s^2,\ldots,s^{n-1}\})$. Thus
if $s^n=s^{n+1}$ for every $s$ in $S$ then
$(s,\alpha)^{n+1}=(s,\alpha)^{n+2}$ for every $(s,\alpha)$ in
$mem(S)$. The converse follows since $S \prec mem(S)$.
\end{proof}

We now proceed with the action of $Cayley(S)$ on $I^*$.

\subsection{Some Lemmas}

\begin{lemma} \label{almost simple}
Let $S$ be an aperiodic semigroup. Let $I \cong (A \times B) \wzero$
be a $0$-minimal ideal, $f\in Cayley(S,I)$, and $w \in I^* \setminus
\{\epsilon\}$.
\begin{enumerate}
\item
$f(w)$ is of the form $[(a,b_1),(a,b_2),\ldots,(a,b_k),0,\ldots,0]$,
with $a \in A$,$b_i \in B$ and $k\geq 0$.
\item
The first $k$ elements of $w$ are
$[(a_1,b_1),(a_2.b_2),\ldots,(a_k,b_k)]$, for some $a_i \in A$.
\item
Assume $k>0$. If we write
$f=\varphi_{s_n}\cdots\varphi_{s_2}\varphi_{s_1}$ then $a$ is given
by $s_n\cdots s_2s_1(a_1,b_1)=(a,b_1)$, that is, $s_n\cdots
s_2s_1a_1=a$, assuming $f(w)$ is not a string of zeros.
\end{enumerate}
\end{lemma}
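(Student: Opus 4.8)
The plan is to first pin down how a single generator $\varphi_s$ acts on a word in $I^*$, and then to induct on the number of generators. Write $I \cong (A \times B) \wzero$ with the Rees multiplication $(a_1,b_1)(a_2,b_2) = (a_1,b_2)C(b_1,a_2)$, and recall the extended left action $s\cdot a = sa$ on $A\cup\{0\}$ together with the matrix $C(s,a)\in\{0,1\}$ from Notation \ref{nota sa}. The crucial computation is this: if $w=[w_1,\ldots,w_m]$ with $w_j=(a_j,b_j)$, then for each $i$ the product $s\,w_1 w_2\cdots w_i$ is either $0$ or equals $(sa_1,b_i)$. Indeed $sw_1=(sa_1,b_1)$ when $C(s,a_1)=1$, and multiplying on the right by $(a_i,b_i)$ only updates the second coordinate to $b_i$ while leaving the first coordinate $sa_1$ untouched, collapsing the product to $0$ exactly when the relevant entry of $C$ vanishes. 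Two features drive everything: the first coordinate of every nonzero entry of $\varphi_s(w)$ is the single element $sa_1$, depending only on $s$ and $a_1$; and since $0$ is a zero of $I$, once a product $sw_1\cdots w_i$ is $0$ every later product is $0$, so $\varphi_s(w)$ has the shape ``constant-first-coordinate block, then a suffix of zeros.'' If some $w_j$ is itself $0$, the same absorption forces the zero suffix to begin no later than position $j$. In all cases $\varphi_s(w)=[(sa_1,b_1),\ldots,(sa_1,b_k),0,\ldots,0]$ and the first $k$ letters of $w$ are genuine elements $(a_1,b_1),\ldots,(a_k,b_k)$ of $A\times B$. This is precisely the three assertions for $n=1$, with $a=sa_1$.

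For the inductive step I would write $f=\varphi_{s_n}\circ g$ with $g=\varphi_{s_{n-1}}\cdots\varphi_{s_1}$ and apply the induction hypothesis to $g$: the word $u:=g(w)$ has the form $[(a',b_1),\ldots,(a',b_{k'}),0,\ldots,0]$ with $a'=s_{n-1}\cdots s_1 a_1$ (when $k'>0$) and second coordinates inherited from $w$. Feeding $u$ into the single-generator analysis above immediately yields $\varphi_{s_n}(u)=[(s_n a',b_1),\ldots,(s_n a',b_k),0,\ldots,0]$ for some $k\le k'$, since all nonzero entries of $u$ already share the first coordinate $a'$. The first coordinate of the result is the constant $s_n a'$, and because $a\mapsto sa$ is a genuine left action of $S$ on $A\cup\{0\}$ (Notation \ref{nota sa}), we get $s_n a'=s_n(s_{n-1}\cdots s_1 a_1)=s_n\cdots s_1 a_1=a$, which is assertion (3). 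Assertion (1) holds with this $a$, and assertion (2) holds because $k\le k'$ forces the first $k$ letters of $w$ to lie among the first $k'$, which the induction hypothesis already identified as elements of $A\times B$ carrying the matching second coordinates.

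The only real care is bookkeeping, not difficulty. I must confirm that the ``nonzero block followed by zeros'' shape is preserved under composition, which it is: $\varphi_{s_n}$ applied to a word whose tail is $0$ produces a word whose tail is $0$, the threshold $k$ only moving left. The feature that makes the induction close so cleanly is that the input block fed to $\varphi_{s_n}$ already has a single common first coordinate, so the single-generator computation applies verbatim at each stage. The one algebraic fact the argument leans on is the associativity of the action, $s_n(s_{n-1}\cdots s_1 a_1)=(s_n\cdots s_1)a_1$, which is exactly the content recorded in Notation \ref{nota sa}. Note that aperiodicity is not invoked in the computation itself; it enters only through the structure theory that placed $I$ in the form $(A\times B)\wzero$ with $C$ valued in $\{0,1\}$ in the first place.
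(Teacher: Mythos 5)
Your proposal is correct and follows essentially the same route as the paper: the paper likewise reduces to the single-generator computation $\varphi_s(w)=[s(a_1,b_1),s(a_1,b_2),\ldots,s(a_1,b_k),0,\ldots]$ in the Rees matrix form, notes that zeros propagate rightward and that second coordinates are untouched, and treats statement (3) as immediate. The only difference is that where the paper compresses the general case into the remark that ``it is enough to show this for a generator,'' you spell out the induction on $n$ explicitly, using the left action of Notation \ref{nota sa} to compose first coordinates --- a fuller write-up of the same argument, not a different one.
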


\begin{proof}
\begin{enumerate}
\item
It is enough to show this for the case when $f$ is a generator of
$Cayley(S,I)$. It is also clear that once we have a $0$ in $f(w)$
all elements beyond it will be $0$. Let
$w=[(a_1,b_1),(a_2.b_2),\ldots,(a_k,b_k)]$, and $\varphi_s$ a
generator.
\begin{align*}
&\varphi_s(w) = \\
&[s(a_1,b_1),s(a_1,b_1)(a_2,b_2),\ldots,s(a_1,b_1) \cdots
(a_k,b_k)]= \\ &[s(a_1,b_1),s(a_1,b_2),\ldots,s(a_1,b_k)]
\end{align*}
\item
We cannot have a $0$ in the first $k$ entries of $w$ because then we
would also have a $0$ in the first $k$ entries of $f(w)$. We see
that applying $\varphi_s$ to $w$ doesn't change $b_i$ for $i \leq
k$.
\item
The last statement is immediate.
\end{enumerate}
\end{proof}

The following lemma will be needed to cover the last cases for the
main theorem of this section.
\begin{lemma} \label{end cases}
Let $\varphi_s \in Cayley(S,I)$ be a generator and
$w=[\ldots,(a_i,b_i),(a_{i+1},b_{i+1}),\ldots]\in I^*$. Furthermore
assume that the first $i-1$ entries of $w$ are $\neq 0$.
\begin{enumerate}
\item
Assume $\varphi_s(w)$ is non zero in the $i$ position. Then
$\varphi_s(w)$ has a zero in the $i+1$ position if and only
$C(b_i,a_{i+1})=0$ where $C$ is the structure matrix of $I\setminus
\{0\}$.
\item
Let $f=\varphi_{s_n}\cdots\varphi_{s_2}\varphi_{s_1}$ and let
$w=[(a,b),\ldots]$. Then $f(w)=[0,0,\ldots,0]$ if and only if
$s_n\cdots s_2s_1(a,b)=0$.
\end{enumerate}
\end{lemma}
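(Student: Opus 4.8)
The plan is to reduce both statements to the explicit Rees multiplication on $I \cong (A \times B) \cup \{0\}$, namely $(a,b)(a',b') = (a,b')C(b,a')$, combined with the structural information already recorded in Lemma \ref{almost simple}. Throughout I would use Notation \ref{nota sa}: whenever $s(a,b) \neq 0$ we have $s(a,b) = (sa,b)$, so neither a generator nor a multiplication on the right ever alters the $B$-coordinate of a surviving entry.

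For part 1, I would write out the two relevant entries of $\varphi_s(w)$ straight from the definition of $\varphi_s$. The $i$-th entry is $s(a_1,b_1)(a_2,b_2)\cdots(a_i,b_i)$, and the $(i+1)$-th entry is the same product with $(a_{i+1},b_{i+1})$ appended; by associativity the latter equals [the $i$-th entry] $\cdot (a_{i+1},b_{i+1})$. Since the $i$-th entry is assumed nonzero, Lemma \ref{almost simple} (together with the $B$-coordinate preservation above) shows it has the form $(c,b_i)$ for some $c \in A$. Multiplying on the right by $(a_{i+1},b_{i+1})$ and applying the Rees rule yields $(c,b_{i+1})C(b_i,a_{i+1})$, which is $0$ exactly when $C(b_i,a_{i+1}) = 0$. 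That is the asserted equivalence.

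For part 2, the key observation is that the first letter of $f(w)$ depends only on the first letter $(a,b)$ of $w$. Applying the generators one at a time gives $\varphi_{s_1}([(a,b)]) = [s_1(a,b)]$, and inductively $f([(a,b)]) = [s_n\cdots s_2 s_1(a,b)]$, so the first entry of $f(w)$ is precisely $s_n\cdots s_1(a,b)$. Hence $f(w)=[0,\ldots,0]$ at once forces $s_n\cdots s_1(a,b)=0$. Conversely, if $s_n\cdots s_1(a,b)=0$ then the first entry of $f(w)$ is $0$, and by Lemma \ref{almost simple} the image has the shape $[(a',b_1),\ldots,(a',b_k),0,\ldots,0]$; a zero in the first position forces $k=0$, so $f(w)$ is the all-zero string.

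The computations are short, so I expect no genuine obstacle. The only point demanding care is the justification in part 1 that the nonzero $i$-th entry has second coordinate exactly $b_i$; this is where the $0$-minimal structure of $I$ and the well-definedness of the left action $a \mapsto sa$ on $A \cup \{0\}$ (Notation \ref{nota sa}) are actually used, and it is exactly what allows the single scalar $C(b_i,a_{i+1})$ to decide whether the next entry vanishes.
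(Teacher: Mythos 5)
Your proof is correct and follows essentially the same route as the paper, whose entire argument is ``a direct calculation can verify both facts'': you have simply written out that calculation, using the Rees multiplication $(c,b_i)(a_{i+1},b_{i+1})=(c,b_{i+1})C(b_i,a_{i+1})$ for part 1 and the shape $[(a',b_1),\ldots,(a',b_k),0,\ldots,0]$ from Lemma \ref{almost simple} for part 2. Your converse in part 2 even subsumes the paper's remark distinguishing the cases $s_n\cdots s_1=0$ versus $s_n\cdots s_1(a,b)=0$ with $s_n\cdots s_1\neq 0$, so there is nothing to add.
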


\begin{proof}
A direct calculation can verify both facts. We notice that for the
second statement there are two possibilities for $f$ when $s_n\cdots
s_2s_1(a,b)=0$. Either $s_n \cdots s_2s_1=0$ in which case $f$ is
the zero function. If not, $(s_n\cdots s_2s_1)(a,b)=0$ in which case
$f$ is not the zero function, but $f$ acts as the zero function on
any string of the form $[(a,b'),\ldots]$ for any $b' \in B$.
\end{proof}

\begin{prop}
Let $S,I$ be as before. Then $Cayley(S,I) \prec mem(S)$.
\end{prop}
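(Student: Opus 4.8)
The plan is to exhibit a subsemigroup $M'$ of $mem(S)$ together with a surjective morphism $\Theta\colon M' \to Cayley(S,I)$. I would take $M'$ to be the subsemigroup of $mem(S)$ generated by the elements $g_s := (s,\emptyset)$, $s \in S$, and try to define $\Theta$ by $\Theta(g_{s_n}\cdots g_{s_1}) = \varphi_{s_n}\cdots\varphi_{s_1}$. Since $Cayley(S,I)$ is generated by the $\varphi_s = \Theta(g_s)$, such a $\Theta$ is automatically onto, and it is automatically multiplicative on generator words, because concatenation of generator words on the $M'$ side corresponds to composition on the $Cayley$ side. The only thing that really requires proof is that $\Theta$ is \emph{well defined}: I must show that the function $\varphi_{s_n}\cdots\varphi_{s_1}\colon I^* \to I^*$ depends only on the element $g_{s_n}\cdots g_{s_1}$ of $mem(S)$ and not on the chosen factorization.

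First I would record the value of $g_{s_n}\cdots g_{s_1}$ in $mem(S)$. A direct computation with the rule $(s,\alpha)(t,\beta) = (st,\ \alpha t \cup \{t\} \cup \beta)$ gives
\[
g_{s_n}\cdots g_{s_1} = \Big(s_n\cdots s_1,\ \{\, s_{r}\cdots s_1 : 1 \le r \le n-1 \,\}\Big),
\]
so the first coordinate is the total product $s_n\cdots s_1$ and the second coordinate is the \emph{set} $P = \{p_1,\dots,p_{n-1}\}$ of proper partial products $p_r = s_r\cdots s_1$. The heart of the argument is then the claim that $\varphi_{s_n}\cdots\varphi_{s_1}$ is determined as a function on $I^*$ by this pair $(s_n\cdots s_1,\ P)$.

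To prove the claim I would analyze the Pascal array column by column, tracking $w^{(r)} := \varphi_{s_r}\cdots\varphi_{s_1}(w)$ as $r$ increases. Fix $w = [(a_1,b_1),\dots,(a_m,b_m)]\in I^*$ with nonzero entries (zeros only truncate, by Lemma~\ref{almost simple}). By Lemma~\ref{almost simple}, for each $r \ge 1$ the word $w^{(r)}$ has the form $[(c_r,b_1),\dots,(c_r,b_{k_r}),0,\dots,0]$, where $c_r = p_r a_1$ is the common $A$-component and $k_r$ is the nonzero-prefix length. Using Lemma~\ref{end cases}(1) and the product rule in $(A\times B)\wzero$, one gets $c_{r+1} = s_{r+1}c_r$ together with the recursion
\[
k_{r+1} = \min\big(k_r,\ \min\{\, i : C(b_i, c_r) = 0 \,\}\big)\qquad (1\le r\le n-1),
\]
where $C$ is the structure matrix of $I\setminus\{0\}$, while the very first application contributes the threshold $\min\{\, i : C(b_i, a_{i+1}) = 0 \,\}$, which depends on $w$ alone (the input $A$-components $a_{i+1}$ only enter here, before they are flattened to a constant). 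Unwinding the recursion, whenever $(s_n\cdots s_1)a_1 \neq 0$ the output $\varphi_{s_n}\cdots\varphi_{s_1}(w)$ has common $A$-component $c_n = (s_n\cdots s_1)a_1$ and nonzero length
\[
k_n = \min\Big( m,\ \min\{\, i : C(b_i,a_{i+1})=0 \,\},\ \min_{p \in P}\min\{\, i : C(b_i,\, p a_1)=0 \,\}\Big),
\]
and $\varphi_{s_n}\cdots\varphi_{s_1}(w) = [0,\dots,0]$ precisely when $(s_n\cdots s_1)a_1 = 0$, by Lemma~\ref{end cases}(2). Since $c_n$ depends only on the total product and $k_n$ is a minimum over $P$, the length depends only on the \emph{set} $P$ (the order and multiplicity of the $p_r$ are washed out by taking a minimum). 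Hence $\varphi_{s_n}\cdots\varphi_{s_1}$ depends only on $(s_n\cdots s_1,\ P)$, which is exactly the $mem(S)$-element $g_{s_n}\cdots g_{s_1}$, and $\Theta$ is well defined.

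With the claim in hand, $\Theta$ is a surjective morphism $M' \to Cayley(S,I)$, so $Cayley(S,I) \prec mem(S)$. I expect the main obstacle to be precisely the middle claim, and within it the bookkeeping of the zero pattern: the first application of a generator is genuinely different from the later ones, so some care is needed to verify that the step-one threshold depends on $w$ alone (and so is irrelevant to well-definedness) and that every later threshold is governed by an element of the memory set $P$. Once the recursion for $k_r$ is in place, the fact that only the unordered set $P$ survives is exactly what matches the set-valued second coordinate of $mem(S)$, and is what makes $mem(S)$ — rather than a finer expansion such as $\widehat S^R$, which remembers the ordered sequence — the correct target.
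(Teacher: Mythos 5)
Your proposal is correct and takes essentially the same approach as the paper: the paper's proof also factors the canonical surjection $S^+\to Cayley(S,I)$ through the map $[s_n,\ldots,s_1]\mapsto(s_n\cdots s_1,\{s_{n-1}\cdots s_1,\ldots,s_2s_1,s_1\})$, whose image is exactly your $M'$, and reduces everything to showing (via Lemmas \ref{almost simple} and \ref{end cases}) that the action on $I^*$ is determined by the total product together with the \emph{set} of proper partial products. The only difference is presentational: where you unwind the recursion into a closed formula for the truncation length $k_n$ as a minimum over $P$, the paper compares two factorizations with equal memory pairs directly, locating the generator that creates the first zero and re-decomposing the second factorization at the matching partial product $s_{r-1}\cdots s_1\in\alpha$.
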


The proof is motivated by the following idea: Let $v\in I^*$, and
let $f=\varphi_{s_n}\cdots \varphi_{s_1} \in Cayley(S,I)$. The
output $f(v)$, is of the form \\
$[(a,b_1),(a,b_2),\ldots,(a,b_k),0,0,\ldots,0]$. Applying $f$
doesn't change the $b_i$'s of $v$. The action of $f$ on $v$ is
determined by the following two questions: 1. what is $a$? 2. what
is $k$? (i.e. from what point does $f(v) $ consist of zeros?).

If $v=[(a_1,b_1),\ldots]$ then we answer the first question by
$(a,b_1)=s_n\cdots s_2s_1(a_1,b_1)$, i.e. the first question is
answered by the first component of an element of $mem(S)$.

For the second we will observe that the appearance of a $0$ will
depend on the set-component of $mem(S)$. Each time we apply a
$\varphi_{s_i}$ to $v$ we get a string of the form $[s_i\cdots
s_2s_1(a_1,b_1),\ldots]$, so the set-component determines the set of
$a$'s we will see along the way. Part 2 of Lemma \ref{end cases}
tells us that if
$v'=\varphi_{s_i}\cdots\varphi_{s_1}(v)=[(a,b_1),\ldots]$, then a
$0$ will be created in $v'$ when we find a $b_i$ such that
$C(b_i,a)=0$.

\begin{proof}
Denote by $S^+$ the free semigroup over $S$, with $S$ considered as
a set. Denote by $\Gamma:S^+ \to Cayley(S,I)$ the canonical
morphism. $$\Gamma:([s_n,\ldots,s_2,s_1]) \mapsto
\varphi_{s_n}\cdots\varphi_{s_2}\varphi_{s_1}$$

Let $\Phi:S^+ \to mem(S)$ be defined by $$\Phi:([s_1]) \mapsto
(s_1,\emptyset)$$ By induction on $n$ it is easy to see that
$$\Phi:([s_n,\cdots,s_2,s_1]) \mapsto (s_n\cdots s_2s_1,\{s_{n-1}\cdots
s_2s_1,\ldots,s_2s_1,s_1\})$$

We will denote by $\sim_\Gamma$ and $\sim_\Phi$ the congruences on
$S^+$ corresponding to $\Gamma$ and $\Phi$.

We show that $v_1\sim_\Phi v_2$ implies $v_1 \sim_\Gamma v_2$.
Let $v_1,v_2 \in S^+$ and let $f=\Gamma(v_1),g=\Gamma(v_2)$. Let
$v_1=[s_n,\ldots,s_2,s_1]$, and $v_2=[t_m,\ldots,t_2,t_1]$. We want
to show that $\Phi(v_1)=\Phi(v_2)$ implies
$\Gamma(v_1)=\Gamma(v_2)$. This is equivalent to $f=g$ in
$Cayley(S,I)$ if $(s_n\cdots s_1,\{s_{n-1}\cdots
s_1,\ldots,s_1\})=(t_m\cdots t_1,\{t_{m-1}\cdots t_1,\ldots,t_1\})$.
If $f$ is the zero function then $s_n\cdots s_1(a,b)=0$ for every
$(a,b)\in A \times B$. Thus, $\Phi(v_1)=(s_n\cdots s_1,\alpha)$ for some $\alpha \in P(S)$,
and since $\Phi(v_1)=\Phi(v_2)$, $\Phi(v_2)=(s_n\cdots s_1,\alpha)$.
In other words, $s_n\cdots s_2s_1=t_m \cdots t_2 t_1$ and we see
that $g$ is also the zero function on $I^*$.

Throughout the rest of this proof, we will assume $f \neq \varphi_0
\neq g$. Write $\Phi(v_1)=\Phi(v_2)=(\sigma,\alpha)$,$\sigma \in
S$,$\alpha \in P(S)$, with $\sigma(a,b) \neq 0$ for some $(a,b)$.
Finally, let $w \in I^*$.

Let $k_1$ denote the position of the last nonzero entry in $f(w)$
(and $k_1=0$ if $f(w)=[0,0,\ldots,0]$) and $k_2$ the position of the
last nonzero entry in $g(w)$. By Lemma \ref{almost
simple}, and since $\Phi(v_1)=\Phi(v_2)$ (in particular $s_n\cdots
s_2s_1 = t_m \cdots t_2t_1$) it follows that $f(w)$ equals $g(w)$ in
the first $min(k_1,k_2)$ entries.

It remains to show that $k_1=k_2$.

First, assume $k_1=0$. Let $w=[(a_1,b_1),\ldots]$. This means that
$f(w)=[0,0,\ldots,0]$. Since $f \neq \varphi_0$ this means that
$C(s_n\cdots s_2s_1,a_1)=0$ i.e. $C(\sigma,a_1)=0$ and so
$g(w)=[0,0,\ldots,0]$ and $k_2=0$, so $k_1=k_2$.

We may assume now that $k_1,k_2 > 0$. Without loss of generality
$k_1 \leq k_2$.

If $k_1=|w|$, then all entries of $f(w)$ are nonzero and since
$k_1\leq k_2$ we are done in this case. So we may assume that $k_1 <
|w|$.

If $w$ has a zero in the $k_1+1$ place then it is immediate that
$g(w)$ has a zero in the $k_1+1$ place, so in this case $k_2 \leq
k_1$. We may therefore assume that the first $k_1+1$ entries in $w$
are not zero, and \\
$w=[(a_1,b_1),(a_2,b_2),\ldots,(a_{k_1},b_{k_1}),(a_{k_1+1},b_{k_1+1}),\ldots]$.


We know that $f(w)$ has a zero in the $k_1+1$ place,and
$f=\varphi_{s_n}\cdots\varphi_{s_2}\varphi_{s_1}$. Let $r$ be such
that $\varphi_{s_r}\cdots\varphi_{s_2}\varphi_{s_1}(w)$ has a zero
in the $k_1+1$ place but
$\varphi_{s_{r-1}}\cdots\varphi_{s_2}\varphi_{s_1}(w)$ does not.

If $r=1$ (i.e. if $\varphi_{s_1}(w)$ has a zero in the $k_1+1$
position) then part 1 of Lemma \ref{end cases} shows that $\varphi_{t_1}(w)$
has a zero in the $k_1+1$ position, and so $g(w)$ has a zero in the
$k_1+1$ position. Thus $k_2 \leq k_1$ and the claim is proved.

Assume $r>1$. Let
$w'=\varphi_{s_{r-1}}\cdots\varphi_{s_1}(w)=[(c,b_1),\ldots,(c,b_{k_1}),(c,b_{k_1+1}),\ldots]$,
where $c$ is determined by $s_{r-1}\cdots
s_2s_1(a_1,b_1)=(s_{r-1}\cdots s_2s_1a_1,b_1)=(c,b_1)$. Applying
$\varphi_{s_r}$ to $w'$ creates a zero in the $k_1+1$ place and
leaves the $k_1$ place nonzero. We calculate
$\varphi_{s_r}(w')=[(s_rc,b_1),\ldots,(s_rc,b_{k_1}),(s_rc,b_{k_1})(c,b_{k_1+1}),\ldots]$.
Since $(s_rc,b_{k_1}) \neq 0$ and $(s_rc,b_{k_1})(c,b_{k_1+1})=0$ we
see that the structure matrix of $I$ has $C(b_{k_1},c)=0$. 
Thus, $C(b_{k_1},s_{r-1}\cdots s_2s_1)=0$.

We observe that for $\Phi(v_2)=(\sigma,\alpha)$, $\alpha$ describes
the set of elements of $S$ that $w$ is multiplied by, as we apply
$\varphi_{s_i}$, $1\leq i \leq n$. Since $\Phi(v_1)=\Phi(v_2)$, and
since $s_{r-1}\cdots s_2s_1 \in \alpha$,we see that $s_{r-1}\cdots
s_1$ belongs to the set-component of $\Phi(v_2)$. This implies
that 
$g$ can also be decomposed as $g=g_2\varphi_tg_1$ for some $t \in S$ with $g_1 \in
Cayley(S,I)$, $g_2 \in Cayley(S,I) \cup \{id\}$, and
$g_1([(a_1,b_1)]) = [s_{r-1}\cdots s_2s_1(a_1,b_1)]$.

We write $g_1(w)=[(d,b_1),\ldots,(d,b_{k_1}),(d,b_{k_1+1}),\ldots]$
(We assume that the $k_1$ and $k_1+1$ places are not zero or else we
will have $k_2 \leq k_1$ and the proof is over). Since
$[(d,b_1)]=g_1[(a_1,b_1)]=[(s_{r-1}\cdots
s_2s_1a_1,b_1)]=f_r[(a_1,b_1)]=[(c,b_1)]$ we see that $c=d$. We
rewrite, $g_1(w)=[(c,b_1),\ldots,(c,b_{k_1}),(c,b_{k_1+1}),\ldots]$.

Calculating
$\varphi_tg_1(w)=[t(c,b_1),\ldots,t(c,b_1)\cdots(c,b_{k_1})(c,b_{k_1+1})]$,
we see that the $k_1+1$ position has a right factor
$(c,b_{k_1})\cdot(c,b_{k_1+1})$. We noticed before that the
structure matrix has $C(b_{k_1},c)=0$, so $\varphi_tg_1(w)$ has a
zero in the $k_1+1$ position, and so does $g(w)$. We have showed
that $k_2 \leq k_1$.

We now have $k_1=k_2$. Thus we have showed that $f(w)=g(w)$ and thus
$f=g$. Thus, $\Phi(v_1)=\Phi(v_2)$ implies
$\Gamma(v_1)=f=g=\Gamma(v_2)$.


Since we have proved that $\sim_\Phi \subset \sim_\Gamma$, there is
an induced surjective morphism from $S^+/\sim_\Phi$ onto
$Cayley(S,I)\S^+/\sim_\Gamma$. Therefore, $Cayley(S,I)$ divides
$S^+/\sim\Phi$ which is a subsemigroup of $mem(S)$.

\end{proof}

\begin{thm} \label{0-simple is finite}
Let $S$ be a finite aperiodic semigroup, and $I$ a $0$-minimal ideal. Then
$Cayley(S,I)$ is finite and aperiodic.
\end{thm}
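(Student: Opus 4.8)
The plan is to reduce the theorem to the two possible structure types of a $0$-minimal ideal and then invoke the machinery already developed. Recall that a $0$-minimal ideal $I$ of a finite semigroup satisfies either $I^2=I$, so that $I$ is $0$-simple, or $I^2=\{0\}$, so that $I$ is null (see \cite{cliffordpreston}). I would begin by recording this dichotomy, reducing the proof to these two cases.

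In the $0$-simple case, aperiodicity of $S$ together with the Corollary to the Rees--Sushkevitch theorem gives $I\cong(A\times B)\wzero$; this is precisely the hypothesis of Lemma \ref{almost simple} and of the preceding Proposition. The Proposition then yields $Cayley(S,I)\prec mem(S)$. Since $S$ is aperiodic, the lemma characterising the aperiodicity of $mem(S)$ shows that $mem(S)$ is aperiodic, and $mem(S)$ is finite because $S$ is. As the class of finite aperiodic semigroups is closed under division---a subsemigroup of an aperiodic semigroup is aperiodic, and a quotient of a finite aperiodic semigroup is again finite and aperiodic---we conclude that $Cayley(S,I)$ is finite and aperiodic.

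In the null case I would compute directly with the generators. If $I^2=\{0\}$ then $x_1x_2=0$ for all $x_1,x_2\in I$, so $\varphi_s([x_1,x_2,\ldots,x_n])=[sx_1,0,\ldots,0]$ for every $s\in S$, and an easy induction gives $\varphi_{s_n}\cdots\varphi_{s_1}([x_1,\ldots,x_n])=[(s_n\cdots s_1)x_1,0,\ldots,0]$. In particular $\varphi_t\varphi_s$ and $\varphi_{ts}$ agree on $I^*$, so the assignment $s\mapsto\varphi_s|_{I^*}$ is a surjective morphism from $S$ onto $Cayley(S,I)$. Thus $Cayley(S,I)$ is a homomorphic image of $S$ and is therefore finite and aperiodic.

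The substantive work has already been carried out in the preceding Proposition, so this theorem is mainly a matter of assembly: the points needing care are the $0$-minimal dichotomy, the observation that finiteness and aperiodicity descend through division, and the separate---though elementary---treatment of the null ideal, which the Proposition does not address. I expect the null case to be the only place where a fresh computation is required, and it presents no real difficulty.
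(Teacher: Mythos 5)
Your proof is correct and follows essentially the same route as the paper: the regular ($0$-simple) case is dispatched exactly as you say, via the Proposition $Cayley(S,I)\prec mem(S)$ together with the finiteness and aperiodicity of $mem(S)$ and closure of finite aperiodic semigroups under division, while the null case is the paper's same direct computation $\varphi_{s_n}\cdots\varphi_{s_1}(w)=[(s_n\cdots s_1)w_1,0,\ldots,0]$, exhibiting $Cayley(S,I)$ as a quotient of $S$. Your framing of the dichotomy via $I^2=I$ versus $I^2=\{0\}$ is an equivalent restatement of the paper's distinction between a regular and a non-regular (null) $\mathcal J$ class.
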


This was proved for the regular case. For a nonregular
$0$-minimal ideal, the theorem is trivial. This is
because such a $\mathcal J$ class is null, so for any $\varphi_s \in
Cayley(S,I)$ and $w=[w_1,w_2,\ldots] \in I^*$ we have
$\varphi_s(w)=[sw_1,sw_1w_2=0,0,\ldots,0]$, so the action of
$\varphi_{s_n}\cdots\varphi_{s_1}$ on $w$ is determined by
$s_n\cdots s_1$, with
$\varphi_{s_n}\cdots\varphi_{s_1}(w)=[s_n\cdots
s_2s_1w_1,0,\ldots,0]$ and $Cayley(S,I)$ is the quotient
of $S$ obtained by identifying two elements if they act the same on
the left of $I$.

For future purposes we extend the previous result.
\begin{defn}
Let $S$ be a finite aperiodic semigroup, and let $I$ be a
$0$-minimal ideal of $S$. Let $f \in Cayley(S,I)$, and
$w=[v_1,\ldots,v_l]\in I^*$. We say that a zero \emph{is created} in
the $i$th position of $w$ by $f$, if there exists a decomposition of
$f$, $f=\varphi_{s_n}\cdots\varphi_{s_2}\varphi_{s_1}$ such that
\begin{enumerate}
\item
$\varphi_{s_{k-1}}\cdots\varphi_{s_2}\varphi_{s_1}(w)$ does not have
a zero in the $i$ position.
\item
$\varphi_{s_k}\cdots\varphi_{s_2}\varphi_{s_1}(w)$ has a zero in the
$i$ position but is nonzero in the $i-1$ position.
\end{enumerate}
If we need to be more specific we will say that the zero is created
by the $k$th generator of the above decomposition of $f$.
\end{defn}

\begin{defn}
Let $S$ be a finite aperiodic semigroup, and $I$ a $0$-minimal ideal
of $S$, such that $I \cong (A\times B) \wzero$. Let
$f=\varphi_{s_n}\cdots\varphi_{s_2}\varphi_{s_1} \in Cayley(S,I)$
and $a\in A$, such that $s_n\cdots s_2s_1(a,b)\neq 0$ for any (and
therefore all) $b \in B$.
\begin{enumerate}
\item
For $1\leq i \leq n+1$ let $\pi_a(i)=s_{i-1}\cdots s_2s_1a$ (and
$\pi_a(1)=a$). We recall the notation $sa$ from Notation \ref{nota sa}.
\item
$new_a(s_n,\cdots,s_1)=\{1\leq i \leq n|\pi_a(i) \neq \pi_a(j) \forall j<i\}$.
\end{enumerate}
\end{defn}

Notice that $|new_a(s_n,\ldots,s_1)| \leq |A|$, for any
$s_n,\ldots,s_1 \in S$.

\begin{lemma}
Let $f=\varphi_{s_n}\cdots\varphi_{s_2}\varphi_{s_1}$. Let $w\in
Cayley(S)(I^*)$ (i.e. $w$ is the output of $Cayley(S,I)$ acting on
some string in $I^*$) such that $w$ is not a string of $0$s. Let
$w=[(a_1,b_1),\ldots]$ and let $a=a_1$. If the $k$th generator of
$f$ creates a new zero, then $k \in new_a(s_n,\ldots,s_2,s_1)$.
\end{lemma}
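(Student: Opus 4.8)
The plan is to track, step by step, how the common first coordinate of the intermediate strings evolves, and to recognize that the creation of a zero is controlled entirely by the structure matrix $C$ evaluated at that first coordinate. Write $w^{(0)}=w$ and $w^{(j)}=\varphi_{s_j}\cdots\varphi_{s_1}(w)$ for $1\le j\le n$. Since $w$ lies in the image of $Cayley(S,I)$ and is not a string of zeros, Lemma \ref{almost simple} gives $w=[(a,b_1),(a,b_2),\ldots,(a,b_m),0,\ldots,0]$, and moreover each $w^{(j)}$ has the same shape: its nonzero entries share a single first coordinate, which by part 3 of that lemma equals $s_j\cdots s_1a$, while its $b$-coordinates in the nonzero positions are inherited unchanged from $w$. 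In the notation of the definition, the common first coordinate of $w^{(j-1)}$ is exactly $\pi_a(j)$.

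First I would pin down the zero-creation condition. Suppose $\varphi_{s_k}$ creates a new zero in position $i$; by definition $w^{(k-1)}$ is nonzero in positions $1,\ldots,i$ and $w^{(k)}=\varphi_{s_k}(w^{(k-1)})$ is nonzero in position $i-1$ but zero in position $i$ (so $i\ge 2$). Applying part 1 of Lemma \ref{end cases} to $\varphi_{s_k}$ acting on $w^{(k-1)}$—whose position-$i$ first coordinate is $\pi_a(k)$ and whose position-$(i-1)$ second coordinate is $b_{i-1}$—shows that this zero is created if and only if $C(b_{i-1},\pi_a(k))=0$.

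Next I would record the elementary monotonicity fact that once a position becomes zero it stays zero: if $w^{(m)}$ has a zero in position $i$, then the prefix product up to position $i$ vanishes, so $\varphi_{s_{m+1}}(w^{(m)})$ is again zero in position $i$, and inductively so is every later $w^{(m')}$. With this in hand the conclusion follows by contradiction. Assume $k\notin new_a(s_n,\ldots,s_1)$, i.e. $\pi_a(k)=\pi_a(j)=:c$ for some $j<k$. Since position $i$ is nonzero in $w^{(k-1)}$, all of positions $1,\ldots,i$ are nonzero there, hence also in $w^{(j-1)}$, which therefore reads $[(c,b_1),\ldots,(c,b_i),\ldots]$. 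Computing position $i$ of $w^{(j)}=\varphi_{s_j}(w^{(j-1)})$ gives $s_j\cdot\big[(c,b_1)\cdots(c,b_i)\big]$, and the prefix product telescopes to $(c,b_i)\prod_{l<i}C(b_l,c)$, which vanishes because it contains the factor $C(b_{i-1},c)=C(b_{i-1},\pi_a(k))=0$. Thus $w^{(j)}$ already has a zero in position $i$, and by monotonicity so does $w^{(k-1)}$, contradicting that position $i$ is nonzero there. Hence $\pi_a(k)\neq\pi_a(j)$ for all $j<k$, that is, $k\in new_a(s_n,\ldots,s_1)$.

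The hard part will be bookkeeping rather than ideas: one must keep straight that $C(b_{i-1},\pi_a(k))=0$ is a statement about the first coordinate $\pi_a(k)$ alone, so that it transfers verbatim to the earlier step $j$ where the same coordinate $c$ recurs, and that the telescoping product at step $j$ is legitimate because positions $1,\ldots,i$ of $w^{(j-1)}$ are all nonzero with common first coordinate $c$. Both points rest on Lemma \ref{almost simple} (preservation of the single-first-coordinate shape and of the $b$-coordinates) together with the observation from Notation \ref{nota sa} that whether $s(a',b')$ is zero depends only on $s$ and $a'$.
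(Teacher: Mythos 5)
Your proof is correct and follows essentially the same route as the paper's: use Lemma \ref{almost simple} to see that every intermediate string has a single first coordinate $\pi_a(j)$ with the $b$-coordinates preserved, deduce $C(b_{i-1},\pi_a(k))=0$ from the zero created at step $k$, and conclude that a repeated coordinate $\pi_a(j)=\pi_a(k)$ with $j<k$ would force the zero to appear already at step $j$. The only difference is one of detail: where the paper dismisses the final step with ``by a similar calculation the zero will be created by the $l$th generator,'' you make the contradiction explicit via the persistence of zeros and the telescoping product $(c,b_1)\cdots(c,b_i)$, which is a welcome elaboration rather than a divergence.
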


\begin{proof}
Suppose the $k$th generator creates a zero in the $i$th entry.
Without loss of generality
$f=\varphi_{s_k}\cdots\varphi_{s_2}\varphi_{s_1}$. Let
$f'=\varphi_{s_{k-1}}\cdots\varphi_{s_2}\varphi_{s_1}$. We then have
$f'(w)=[(a',b_1),\ldots,(a',b_{i-1}),(a',b_i),\ldots]$, where
$a'=s_{k-1}\cdots s_2s_1a=\pi_a(k)$. This is true even if $k=1$ and
$f'$ is the identity function, since we assume $w \in
Cayley(S)(I^*)$. Since the zero is created in the $i$th entry we
have
\\$f(w) = \varphi_{s_k}(f'(w)) =
[(s_ka',b_1),\ldots,(s_ka',b_{i-1}),(s_ka',b_{i-1})(a',b_i)=0,\ldots]$.
This shows that
$C(b_{i-1},a')=0$. 
If $k=1$ then it is immediate that $k \in new_a(s_n,\ldots,s_1)$.
Suppose that $1<k \notin new_a(s_n,\ldots,s_1)$. Then for some $l <
k$ we have $s_{l-1}\cdots s_2s_1a=a'$ and by a similar calculation
the zero will be created by the $l$th generator and not the $k$th
generator.
\end{proof}

\begin{cor} \label{simple finiteness}
Let $S$ be an aperiodic semigroup, and let $I$ be a $0$-minimal
ideal of $S$. There exists an $N\leq |A|^2+1 \in \mathbb N$ such
that
every $f=\varphi_{s_n}\cdots \varphi_{s_2}\varphi_{s_1}
\in Cayley(S,I)$ can be written as
$f=g_{k+1}\varphi_{s_{i_k}}g_k\cdots g_2 \varphi_{s_{i_1}}g_1$ for
some $k \leq N$, with $g_l \in Cayley(S,I) \cup \{id\}$ and such
that all zeros are created by the $\varphi_{s_{i_j}}$, $1 \leq j
\leq k$ for every $w \in I^*$,
\end{cor}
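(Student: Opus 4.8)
The plan is to isolate the first generator $\varphi_{s_1}$ and apply the preceding lemma to the tail $g=\varphi_{s_n}\cdots\varphi_{s_2}$. The crucial point is that, by Lemma \ref{almost simple}, for any $w\in I^*$ the word $w'=\varphi_{s_1}(w)$ already lies in $Cayley(S)(I^*)$: it has the form $[(c,b_1),\ldots,(c,b_m),0,\ldots,0]$ with a single $A$-component $c\in A$. Hence every generator of $f$ after the first one acts on an output, which is exactly the hypothesis of the preceding lemma. This is also why the first generator must be treated separately: for a general (non-output) input the $A$-component may return to $a_1$ at a later position that is not new, while $\varphi_{s_1}$ itself, being governed by the intrinsic breaks $C(b_{i-1},a_i)$ of the raw input rather than by $C(b_{i-1},a_1)$, need not create the matching zero, so applying $new_{a_1}(s_n,\ldots,s_1)$ to $w$ directly would miss zeros.

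Concretely, I would fix $f=\varphi_{s_n}\cdots\varphi_{s_1}$, set $g=\varphi_{s_n}\cdots\varphi_{s_2}$ so that $f=g\,\varphi_{s_1}$, and for each possible first $A$-component $c\in A$ of an output invoke the preceding lemma for $g$ acting on an output with that value of $c$. This confines the positions (inside $g$) at which $g$ creates zeros to the set $new_c(s_n,\ldots,s_2)$, which has at most $|A|$ elements. Letting $c$ range over $A$ and adding the position $1$ of $\varphi_{s_1}$, I obtain a set $D\subseteq\{1,\ldots,n\}$ of distinguished positions with $|D|\leq|A|\cdot|A|+1=|A|^2+1$; this is the claimed bound $N$.

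Writing $D=\{i_1<i_2<\cdots<i_k\}$, with $k=|D|\leq N$ and $i_1=1$, I would group the generators lying strictly between consecutive distinguished positions into blocks: $g_1=id$, $g_j=\varphi_{s_{i_j-1}}\cdots\varphi_{s_{i_{j-1}+1}}$ for $2\leq j\leq k$, and $g_{k+1}=\varphi_{s_n}\cdots\varphi_{s_{i_k+1}}$, each of which is either a product of generators (hence in $Cayley(S,I)$) or, when empty, the identity. This yields the factorization $f=g_{k+1}\varphi_{s_{i_k}}g_k\cdots g_2\varphi_{s_{i_1}}g_1$. To finish, for an arbitrary $w\in I^*$ one checks that every created zero is created by a distinguished generator: a zero created by $\varphi_{s_1}$ is attributed to $1\in D$, and a zero created by some $\varphi_{s_k}$ with $k\geq2$ coincides with a zero created by the corresponding generator of $g$ acting on the output $\varphi_{s_1}(w)$, hence sits at a position of $D$ by construction. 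The main obstacle is precisely this reduction to the output case after a single step; once it is in place, the counting bound and the grouping are routine.
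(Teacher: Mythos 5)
Your proposal is correct and follows essentially the same route as the paper: isolate $\varphi_{s_1}$ so that the remaining generators act on a word in $Cayley(S)(I^*)$, apply the preceding lemma to confine zero-creating positions to the sets $new_c$, take the union over all $c \in A$ (plus the first position) to get the bound $|A|^2+1$, and group the intervening generators into the blocks $g_l$. Your explicit justification of why the first generator needs separate treatment, and your explicit block construction, merely spell out details the paper leaves implicit.
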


\begin{proof}
In order to use the previous lemma $w$ needs to be in $Cayley(S)(I^*)$.
Instead of $f=\varphi_{s_n}\cdots\varphi_{s_2}\varphi_{s_1}$ acting
on $w$ we may consider $f=\varphi_{s_n}\cdots\varphi_{s_2}$ acting
on $\varphi_{s_1}(w)$.

For every $a \in A$ consider the set $new_a(s_n,\ldots,s_1)$, and
let\\ $X=\cup_{a \in A}new_a(s_n,\ldots,s_1)$. By the previous
lemma, all the zeros must be created by the $r$th generator for some $r \in X$. In fact,
every $a \in A$ contributes at most $|A|$ elements to $X$. Thus, for any word $w$, the zeroes will be created by an $r$th generator for some $r \in X$. Thus, in the product $f=\varphi_{s_n}\cdots\varphi_{s_2}\varphi_{s_1}$ we can single out $|X|$ generators which are the only ones that can create a zero. We also have
$|X| \leq |A|^2+1$. The $+1$ in this formula comes from the
beginning of the proof.
\end{proof}

\section{Finiteness}

Before the next lemma, recall the definition of $p:S^+ \to S$ given
by $p([a_1,a_2,\ldots,a_l])=a_l$ and in particular, $p([a])=a$.

\begin{lemma} \label{finiteness_technical_lemma}
Let $S$ be a finite aperiodic semigroup, and $A \times B$ a regular
$\mathcal J$ class of $S$. Let $[v_1,v_2,\ldots,v_k] = v \in (A
\times B)^+$. Let $f \in Cayley(S)$. Write
$f=\varphi_{s_n}\cdots\varphi_{s_1}$ and $v_i=(a_i,b_i)$.
Furthermore assume that
 $f(v) \in (A \times B)^+$. Then
$$f_v=\varphi_{(s_n\cdots
s_2s_1a_1,b_k)}\cdots\varphi_{(
s_2s_1a_1,b_k)}\varphi_{(s_1a_1,b_k)}$$.
\end{lemma}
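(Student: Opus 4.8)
The plan is to read $f_v$ off Theorem~\ref{fractalness} and then evaluate each of the generator indices it produces. By that theorem,
$$f_v=\varphi_{(p\varphi_{s_n}\cdots\varphi_{s_1}(v))}\cdots\varphi_{(p\varphi_{s_2}\varphi_{s_1}(v))}\varphi_{(p\varphi_{s_1}(v))},$$
so it suffices to show that for every $1\leq i\leq n$ the last letter satisfies $p\varphi_{s_i}\cdots\varphi_{s_1}(v)=(s_i\cdots s_2s_1a_1,b_k)$. Comparing this with the asserted formula for $f_v$ finishes the argument immediately, so the entire content of the lemma is the computation of these last letters.

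First I would check that no zero is ever created along the way, so that every intermediate word stays inside the $\mathcal J$-class. Reading the Pascal array with rows indexed by the $s_i$ and columns by the $v_j$, its entries obey $t_{ij}=t_{i(j-1)}t_{(i-1)j}$ and $0$ is absorbing, so a zero at any $t_{ij}$ forces $t_{nj}=0$ by propagating down column $j$. Since $f(v)\in(A\times B)^+$ has no zero entry, no $t_{ij}$ can be zero, and hence each $\varphi_{s_i}\cdots\varphi_{s_1}(v)$ lies in $(A\times B)^+$.

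Next I would compute these words explicitly by induction on $i$. The $\mathcal J$-class $A\times B$ is a simple aperiodic semigroup, so within it $(a,b)(a',b')=(a,b')$, and the left action of $S$ satisfies $s(a,b)=(sa,b)$ whenever the product is nonzero (Notation~\ref{nota sa}). Applying $\varphi_{s_1}$ to $v=[(a_1,b_1),\ldots,(a_k,b_k)]$ and collapsing the products $(a_1,b_1)\cdots(a_j,b_j)=(a_1,b_j)$ gives $\varphi_{s_1}(v)=[(s_1a_1,b_1),\ldots,(s_1a_1,b_k)]$; the same collapse applied to a constant-$a$ word $[(c,b_1),\ldots,(c,b_k)]$ shows that $\varphi_{s_{i+1}}$ sends it to $[(s_{i+1}c,b_1),\ldots,(s_{i+1}c,b_k)]$. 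Hence $\varphi_{s_i}\cdots\varphi_{s_1}(v)=[(c_i,b_1),\ldots,(c_i,b_k)]$ with $c_i=s_i\cdots s_2s_1a_1$, whose last letter is exactly $(s_i\cdots s_2s_1a_1,b_k)$, as needed. This is essentially the content of Lemma~\ref{almost simple} parts~(1)--(3).

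The one point requiring care is that Lemma~\ref{almost simple} and Notation~\ref{nota sa} were stated for a $0$-minimal \emph{ideal}, whereas here $A\times B$ is only a regular $\mathcal J$-class. I would handle this by passing to the trace $J^{tr}$ of $J=A\times B$, which is a $0$-simple aperiodic semigroup on which $S$ acts, and on which the generator computation of Lemma~\ref{almost simple} applies verbatim once we observe (as in the zero-propagation step) that the hypothesis $f(v)\in(A\times B)^+$ keeps every relevant product out of $\theta$. Beyond this bookkeeping the argument is a routine substitution into the fractalness formula, so I do not expect any genuine obstacle past justifying the $\mathcal J$-class analogue of the structural facts.
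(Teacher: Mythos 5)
Your proposal is correct and follows essentially the same route as the paper: substitute into the fractalness formula of Theorem~\ref{fractalness} and verify by induction that $\varphi_{s_i}\cdots\varphi_{s_1}(v)=[(s_i\cdots s_2s_1a_1,b_1),\ldots,(s_i\cdots s_2s_1a_1,b_k)]$, so that $p\varphi_{s_i}\cdots\varphi_{s_1}(v)=(s_i\cdots s_2s_1a_1,b_k)$. Your two supplementary justifications --- the Pascal-array zero-propagation argument showing that $f(v)\in(A\times B)^+$ forces every intermediate product to stay in $A\times B$, and the passage to $J^{tr}$ to transfer the facts of Lemma~\ref{almost simple} and Notation~\ref{nota sa} from $0$-minimal ideals to a regular $\mathcal J$-class --- are details the paper's proof leaves implicit, and both are sound.
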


\begin{proof}
Since $f(v) \in (A \times B)^*$ we have $v_1v_2\cdots v_k
\in A \times B$, 
so $v_1v_2\cdots v_i=(a_1,b_i)$ for $1\leq i \leq k$.
$\varphi_{s_1}(v)=[s_1v_1,s_1v_1v_2,\ldots,s_1v_1\cdots
v_k]=[(s_1a_1,b_1),\ldots,(s_1a_1,b_k)]$. Proceeding from here (by
inducting on $i$) we have
$$\varphi_{s_i}\cdots\varphi_{s_1}(v)=[(s_i\cdots s_2s_1
a_1,b_1),\ldots,(s_i\cdots s_2s_1 a_1,b_k)]$$

We now recall the formula for $f_v$ from Theorem \ref{fractalness}.
We have $f_v=\varphi_{(p\varphi_{s_n}\cdots\varphi_{s_1}(v))} \ldots
\varphi_{(p\varphi_{s_2}\varphi_{s_1}(v))}
\varphi_{(p\varphi_{s_1}(v))}$. 
Since $p\varphi_{s_i}\cdots\varphi_{s_1}(v)=(s_i \cdots
s_2s_1a_1,b_k)$, we have our result.
\end{proof}


\begin{rem}
With the previous definitions, assume that $f(v) \in (A \times
B)^+$. Then $\varphi_1(v) \in (A\times B)^+$.
\end{rem}

\begin{proof}
We write $f=\varphi_{s_n}\cdots\varphi_{s_2}\varphi_{s_1}$.
We observe that every entry in the word
$\varphi_1([v_1,v_2,\ldots,v_k])=[v_1,v_1v_2,\ldots,v_1v_2\cdots
v_k]$, is $\mathcal L$ above the corresponding entry of $f(v)$. This follows from the fact that $f(v)$ can be obtained from $\varphi_1(v)$ by multiplying each element on the left by $s_1$, and then by applying $\varphi_{s_n}\cdots\varphi_{s_2}$ to the resulting word, which again is just left multiplication.
Since $v \in (A\times B)^+$ and $f(v)\in (A\times B)^+$ we have that
$\varphi_1(v) \in (A \times B)^+$.
\end{proof}

\begin{cor} \label{action_on_suffix_depends_on_j}
Let $S$ be a finite aperiodic semigroup, $A \times B$ a $\mathcal J$
class of $S$. Let $[v_1,v_2,\ldots,v_k] = v \in (A \times B)^+$ and
let $f \in Cayley(S)$. Write $f=\varphi_{s_n}\cdots\varphi_{s_1}$
and $v_i=(a_i,b_i)$. Assume that $f(v) \in (A \times B)^+$.
Furthermore, let $j=p\varphi_1(v)=(a_1,b_k)$. Then $f_v=f_{[j]}$.
\end{cor}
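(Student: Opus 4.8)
The plan is to apply Lemma~\ref{finiteness_technical_lemma} twice: once to the word $v$ itself, and once to the one-letter word $[j]=[(a_1,b_k)]$, and then observe that the two resulting formulas coincide. By the preceding remark we know $\varphi_1(v)\in(A\times B)^+$, so that $j=p\varphi_1(v)=v_1v_2\cdots v_k=(a_1,b_k)$ genuinely lies in $A\times B$; hence $[j]\in(A\times B)^+$ and it makes sense to speak of $f_{[j]}$. Applying Lemma~\ref{finiteness_technical_lemma} directly to $v$, whose hypotheses hold by assumption, gives
\[
f_v=\varphi_{(s_n\cdots s_2s_1a_1,b_k)}\cdots\varphi_{(s_2s_1a_1,b_k)}\varphi_{(s_1a_1,b_k)}.
\]

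Next I would apply the same lemma to $[j]$, for which I must verify its single hypothesis, namely that $f([j])\in(A\times B)^+$. Writing $f=\varphi_{s_n}\cdots\varphi_{s_1}$, the image $f([j])$ is the one-letter word $[\,s_n\cdots s_2s_1(a_1,b_k)\,]$. Since $f(v)\in(A\times B)^+$, its first entry $s_n\cdots s_2s_1(a_1,b_1)=(s_n\cdots s_2s_1a_1,b_1)$ is nonzero, whence $s_n\cdots s_2s_1a_1\neq 0$; and because the vanishing of $s(a,b)$ depends only on $s$ and $a$ and not on $b$ (Lemma~\ref{almost simple}, equivalently the structure matrix $C$ of Notation~\ref{nota sa}), the element $s_n\cdots s_2s_1(a_1,b_k)=(s_n\cdots s_2s_1a_1,b_k)$ is also nonzero. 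Thus $f([j])\in(A\times B)^+$, and Lemma~\ref{finiteness_technical_lemma} applied to $[j]$—whose unique letter has first coordinate $a_1$ and second coordinate $b_k$—yields exactly
\[
f_{[j]}=\varphi_{(s_n\cdots s_2s_1a_1,b_k)}\cdots\varphi_{(s_2s_1a_1,b_k)}\varphi_{(s_1a_1,b_k)}.
\]

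Comparing the two displays gives $f_v=f_{[j]}$, which is the claim. There is essentially no serious obstacle: the substance is already packaged in Lemma~\ref{finiteness_technical_lemma}, and the only point demanding care is the verification that $f([j])\in(A\times B)^+$, i.e. that $s_n\cdots s_2s_1a_1\neq 0$, which is forced by the nonzero first entry of $f(v)$ together with the fact that zeroness of $s(a,b)$ is independent of $b$.
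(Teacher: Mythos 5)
Your proof is correct and takes essentially the same approach as the paper: you verify $f([j])\in(A\times B)^+$ by combining the nonzero first entry of $f(v)$ with the fact that the vanishing of $s(a,b)$ is independent of $b$, then apply Lemma~\ref{finiteness_technical_lemma} to both $v$ and $[j]$ and observe that the two resulting product formulas coincide. If anything, your justification of $s_n\cdots s_2s_1a_1\neq 0$ is spelled out slightly more explicitly than in the paper's own proof.
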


\begin{proof}
We first show that $f([j]) \in (A \times B)^+$. We calculate
$f([j])=[s_n\cdots s_2s_1(a_1,b_k)]$. Since $f(v) \in (A \times
B)^+$, and since the first letter of $f(v)$ is $s_n\cdots
s_2s_1(a_1,b_1)$ it follows that $s_n\cdots s_2s_1(a_1,b_k) \in A
\times B$, so $f([j]) \in (A \times B)^+$.

Now, $f_v=\varphi_{(s_n\cdots s_2s_1(a_1,b_k))}\cdots\varphi_{(
s_2s_1(a_1,b_k))}\varphi_{(s_1(a_1,b_k))}$. We also have
$f_{[j]}=\varphi_{(s_n\cdots s_2s_1(a_1,b_k))}\cdots\varphi_{(
s_2s_1(a_1,b_k))}\varphi_{(s_1(a_1,b_k))}$.

\end{proof}

\begin{defn}
Let $S$ be a finite semigroup. Let $T$ be an ideal of $S$ and $J$ a
$\mathcal J$ class of $S$ not in $T$. We say that $J$ is
\emph{directly above} $T$ if $J \cup T$ is an ideal of $S$.
Equivalently, $J$ is a $0$-minimal $\mathcal J$ class of $S / T$.
\end{defn}

Let $S$ be a semigroup with an ideal $T$ and a $\mathcal J$ class
$J$ directly above $T$. Let $w \in (T \cup J)^*$. Since every letter
in $w$ is in $J$ or in $T$, $w$ can be uniquely decomposed as
$w=w_J$ or $w=w_Jw_T'w'$ such that $w_J \in J^*$, $w_T' \in T$ and
$w' \in (J \cup T)^*$. In the case that $w$ is of the form
$w=f(w_0)$ for some $f \in Cayley(S,T \cup J)$ and $w_0 \in (T \cup
J)^*$ we can decompose $w=w_Jw_T$ with $w_J \in J^*$ and $w_T \in
T^*$. In both cases, we call $w_J$ the prefix of $w$ in $J$.

\begin{defn}
Let $S$ be a finite aperiodic semigroup. Let $T$ be an ideal of $S$,
and let $J$ be a $\mathcal J$ class directly above $T$. For an $f\in
Cayley(S,T\cup J)$, the set of $f$-$J$-stable words is the set of
words $st(f,J) \subset (T \cup J)^*$ such that the prefix of $w$ in
$J$ has the same length as the prefix of $f(w)$ in $J$.
\end{defn}

We observe that in general, the length of the prefix of $w$ in $J$
is larger or equal to the length of the prefix of $f(w)$ in $J$.

Consider the following relation on $Cayley(S,T \cup J)$. We write $f
\sim g$ if and only if $st(f,J)=st(g,J)$ and the restrictions of $f$
and $g$ to this set is equal.

\begin{lemma}
Let $S,T,J,\sim$ be as before. $\sim$ is a congruence on
\\$Cayley(S,T \cup J)$.
\end{lemma}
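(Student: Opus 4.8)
The plan is to verify that $\sim$ is compatible with multiplication on both sides; that $\sim$ is an equivalence relation is immediate, since it is defined through equalities. Throughout, write $\ell_J(w)$ for the length of the prefix of $w$ in $J$. By the observation preceding the statement, $\ell_J(f(w)) \le \ell_J(w)$ for every $f \in Cayley(S,T\cup J)$ and every $w \in (T\cup J)^*$, and by definition $w \in st(f,J)$ means precisely $\ell_J(f(w)) = \ell_J(w)$. Recall also that composition is read right-to-left, so $(fg)(w) = f(g(w))$.

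The key step is a multiplicativity lemma for the stable sets:
\[
st(fg,J) = \{\, w \in st(g,J) : g(w) \in st(f,J) \,\} = st(g,J) \cap g^{-1}(st(f,J)).
\]
This follows at once from monotonicity: for any $w$ we have $\ell_J(fg(w)) = \ell_J(f(g(w))) \le \ell_J(g(w)) \le \ell_J(w)$, so the left-hand quantity equals $\ell_J(w)$ if and only if both inequalities are equalities, i.e. if and only if $w \in st(g,J)$ (first equality) and $g(w) \in st(f,J)$ (second equality). Here we use that $g$ maps $(T\cup J)^*$ into $(T\cup J)^*$, so that $g(w)$ has a well-defined prefix in $J$. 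On this set one has $(fg)(w) = f(g(w))$ by definition.

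Now I would deduce both compatibilities. For right compatibility, suppose $f\sim g$ and let $h \in Cayley(S,T\cup J)$. The lemma gives $st(fh,J) = st(h,J)\cap h^{-1}(st(f,J))$ and $st(gh,J) = st(h,J)\cap h^{-1}(st(g,J))$, which coincide since $st(f,J)=st(g,J)$; and for $w$ in this common set we have $h(w) \in st(f,J)=st(g,J)$, so $(fh)(w) = f(h(w)) = g(h(w)) = (gh)(w)$ because $f$ and $g$ agree on their common stable set. Hence $fh \sim gh$. For left compatibility, put $St := st(f,J)=st(g,J)$, on which $f=g$; the lemma gives $st(hf,J) = St\cap f^{-1}(st(h,J))$ and $st(hg,J) = St\cap g^{-1}(st(h,J))$, and for $w\in St$ the conditions $f(w)\in st(h,J)$ and $g(w)\in st(h,J)$ agree since $f(w)=g(w)$, so the two stable sets are equal. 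On this set $(hf)(w) = h(f(w)) = h(g(w)) = (hg)(w)$, whence $hf \sim hg$. Combining, $\sim$ is a congruence.

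The only genuine content is the multiplicativity lemma, and the single point to handle with care is the monotonicity $\ell_J(f(w)) \le \ell_J(w)$, which is where the hypothesis that $T\cup J$ is an ideal enters: once a coordinate of a running product $s v_1\cdots v_i$ leaves $J$ it must fall into $T$ and remain in $T$ under any further left multiplication, so the prefix in $J$ can never grow. Since that observation is already granted in the text, the rest of the argument is essentially formal set manipulation, and I expect no further obstacle.
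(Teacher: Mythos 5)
Your proof is correct and follows essentially the same route as the paper: the paper's argument also hinges on the fact that $w \in st(fg,J)$ iff $w \in st(g,J)$ and $g(w) \in st(f,J)$, which it asserts directly (``In particular this means\dots'') and which you justify explicitly via the monotonicity $\ell_J(f(w)) \le \ell_J(w)$. The only cosmetic difference is that you verify left and right compatibility separately and combine by transitivity, while the paper handles $f\sim f'$, $g\sim g'$ simultaneously; the content is identical.
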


\begin{proof}
It is clear that $\sim$ is an equivalence relation. Suppose that $f
\sim f'$ and $g \sim g'$. Let $w\in st(fg,J)$. In particular this
means that $w\in st(g,J)$ and that $g(w) \in st(f,J)$. This implies
$g(w)=g'(w)$ and that $f(g(w))=f'(g(w))=f'(g'(w))$. Thus $w \in
st(f'g',J)$ and $fg(w)=f'(g(w))=f'g'(w)$. The dual argument
completes the proof.
\end{proof}

\begin{nota}
We write $Cayley_{st-J}(S,T \cup J)$ for the quotient of
\\$Cayley(S,T \cup J)$ by $\sim$.
\end{nota}

We point out, that for $[f]_\sim\in Cayley_{st-J}(S,T \cup J)$ and
$w \in st(f,J)$, $[f]_\sim(w)$ is always well defined, i.e. every $g \sim f$
can act on $w$ and all these $g$'s give the same $g(w)$. We will
refer to $[f]_\sim(w)$ as $\tilde f(w)$, whenever $[f]_\sim(w)$ is defined.
In general we will write $\tilde f$ for $[f]_{\sim}$.

Until now, the notation $Cayley(S,T)$ was used when $T$ was an ideal
of $S$. It can actually be used where $T$ is any semigroup which $S$
acts on from the left, by defining
$\varphi_s([t_1,t_2,\ldots,t_k])=[st_1,st_1t_2,\ldots,st_1t_2\cdots
t_k]$. However, in general, $Cayley(S,T)$ is no longer a divisor of
$Cayley(S)$.

Let $S$ be an aperiodic semigroup, $T$ an ideal of $S$ and $J$ a
$\mathcal J$ class directly above $T$. Let $\alpha:(T \cup J) \to
J^{tr} \cong (T\cup J)/T$ be the
ideal quotient. 
$\varphi_s$ can act on strings in $J^{tr}$ in the following way: Let
$w=[j_1,\ldots, j_n] \in (J^{tr})^*$. We have
$\varphi_s([j_1,j_2,\ldots,j_n])=[\overline{sj_1},\overline{sj_1j_2},\ldots,\overline{sj_1j_2\ldots
j_n}]$, with $\overline s=\alpha(s)$, i.e. $\overline s=s$ if $s \in
J$ and $\overline s=0$ otherwise.

\begin{nota}\label{S acting on J tr}
We write $Cayley(S,J^{tr})$ for the semigroup of transformations
$(J^{tr})^* \to (J^{tr})^*$ generated by $\{\varphi_s\}_{s \in S}$.
\end{nota}

We define a morphism $Cayley(S,T \cup J) \to
Cayley(S,J^{tr})$. Write $f^{tr}$ for the image of $f$. $f^{tr}$'s action
on $w \in (J^{tr})^*$ can be described as follows. We first replace
any $0$ in $w$ with any element of $T$. Next, apply $f$ to this new
word. Finally, replace all elements of $T$ with $0$ (the zero of
$J^{tr}$).

Let $f \in Cayley(S,J \cup T)$ and $w \in (J \cup T)^*$. Let
$f^{tr}$ be as defined above and let $w^{tr}$ denote the word
obtained from $w$ be replacing elements of $T$ with $0$. We easily
see that $w \in st(f,J)$ if and only if $w^{tr} \in st(f^{tr},J)$
($J$ is a $\mathcal J$ class of $J^{tr}$, so $st(f^{tr},J)$ makes
sense).

We will use the above notations throughout the next proofs. We
summarize.

\begin{defn}
Let $S$ be an aperiodic finite semigroup. Let $T$ be an ideal and
let $J$ be a $\mathcal J$ class directly above $T$. We have defined:
\begin{enumerate}
\item
For $w \in (T \cup J)^*$, the word $w^{tr} \in (J^{tr})^*$ is
obtained from $w$ by replacing elements of $T$ with $0$.
\item
Let $f \in Cayley(S,T\cup J)$. The function $f^{tr} \in
Cayley(S,J^{tr})$ is given by $f^{tr}(w)=(f(w'))^{tr}$, where $w'$
is any word in $(J \cup T)^*$ satisfying $(w')^{tr}=w$. It is
obvious that this is well defined.
\item
Let $f \in Cayley(S,T \cup J)$. We define $st(f^{tr},J)$ to
be the set of $f^{tr}$-$J$-stable words in $(J^{tr})^*$. For $w \in
(J \cup T)^*$, we have $w \in st(f,J)$ if and only if $w^{tr} \in
st(f^{tr},J)$.
\end{enumerate}
\end{defn}

We see that the mapping $f \mapsto f^{tr}$ is a morphism $Cayley(S,T
\cup J) \to Cayley(S,J^{tr})$, i.e. $f^{tr}g^{tr}=(fg)^{tr}$. We
will now show that it induces a morphism $Cayley_{st-J}(S,T \cup J)
\to Cayley_{st-J}(S,J^{tr})$ .

\begin{defn}
Let $\tilde f\in Cayley_{st-J}(S,T \cup J)$ and let $w \in (J^{tr})^*$. Write $f^{tr}(w)$ for the word
$(f(w))^{tr}$ where $f$ is an element of $Cayley(S,T \cup J)$ such that $f$ maps to $\tilde f$,
and the action of $f$ on $J^{tr}$ is understood as in Notation \ref{S acting on J tr}. It is clear
that this is well defined.
Thus we have defined a mapping $Cayley_{st-J}(S,T \cup J) \to Cayley_{st-J}(S,J^{tr})$.
\end{defn}

\begin{lemma}
Let $S$ be an aperiodic semigroup, $T$ an ideal of $S$ and $J$ a
$\mathcal J$ class directly above $T$. Let $\tilde f,\tilde g\in Cayley_{st-J}(S,T
\cup J)$, and let $f,g \in Cayley(S,T\cup J)$ be preimages of $\tilde f,\tilde g$. Then $(fg)^{tr}=f^{tr}g^{tr}$.
\end{lemma}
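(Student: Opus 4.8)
The plan is to verify the identity directly on words, using only the defining property of the $tr$-operation together with a judicious choice of lifts. Since $(fg)^{tr}$, $f^{tr}$ and $g^{tr}$ are all genuine tree endomorphisms of $(J^{tr})^*$ (that is, elements of $Cayley(S,J^{tr})$), it suffices to show that $(fg)^{tr}(w)=f^{tr}(g^{tr}(w))$ for every $w \in (J^{tr})^*$. Note that the preimage hypothesis on $f,g$ plays no role in the equation itself; it merely records the context in which the identity will later be applied, namely to deduce that the induced map on $Cayley_{st-J}$ is a morphism.

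First I would fix $w \in (J^{tr})^*$ and choose any lift $w' \in (T \cup J)^*$ with $(w')^{tr}=w$. Recalling that composition is applied right-to-left, so that $(fg)(w')=f(g(w'))$, the definition of the $tr$-operation gives $(fg)^{tr}(w)=\left((fg)(w')\right)^{tr}=\left(f(g(w'))\right)^{tr}$. For the other side, the same definition gives $g^{tr}(w)=(g(w'))^{tr}$. The key observation is that $g(w')$ is itself a legitimate lift of $g^{tr}(w)$: since $g \in Cayley(S,T\cup J)$ maps $(T\cup J)^*$ into itself we have $g(w') \in (T\cup J)^*$, and $(g(w'))^{tr}=g^{tr}(w)$ by construction. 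Because $f^{tr}(u)$ may be computed from \emph{any} lift of $u$, using the lift $g(w')$ of $g^{tr}(w)$ yields $f^{tr}(g^{tr}(w))=\left(f(g(w'))\right)^{tr}$. Comparing the two computations shows $(fg)^{tr}(w)=f^{tr}(g^{tr}(w))$ for all $w$, which is exactly $(fg)^{tr}=f^{tr}g^{tr}$.

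The only ingredient beyond unwinding definitions is the well-definedness of the $tr$-operation, namely that $\left(h(w')\right)^{tr}$ does not depend on the choice of lift $w'$; this is what makes $f^{tr}$ a function at all and hence legitimizes evaluating it at $g^{tr}(w)$ through the specific lift $g(w')$. This well-definedness was already recorded, and it rests on $T$ being an ideal: left multiplication by any $s \in S$ satisfies $sT \subseteq T$ and therefore descends to the Rees quotient $J^{tr}=(T\cup J)/T$, so that $\alpha(s\,a_1\cdots a_i)$ depends on $a_1,\dots,a_i$ only through their images in $J^{tr}$. The main (and essentially only) point requiring care is thus to be explicit that $g(w')$ is an admissible lift; once this is granted, no computation with the Pascal array or with portraits is needed. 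Finally, since the argument is valid for arbitrary preimages $f,g$ of $\tilde f,\tilde g$, it shows in particular that $(fg)^{tr}$ and $f^{tr}g^{tr}$ determine the same class in $Cayley_{st-J}(S,J^{tr})$, which is precisely what is required for the induced map $Cayley_{st-J}(S,T\cup J)\to Cayley_{st-J}(S,J^{tr})$ to respect products.
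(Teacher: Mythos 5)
Your proof is correct and takes essentially the same route as the paper: the paper's own proof of this lemma simply cites the previously asserted (and unproved) fact that $f \mapsto f^{tr}$ is a morphism together with well-definedness, and your lift computation --- evaluating $f^{tr}$ at $g^{tr}(w)$ through the admissible lift $g(w')$ of $g^{tr}(w)$, so that both sides equal $\left(f(g(w'))\right)^{tr}$ --- is precisely the verification the paper leaves implicit, including the correct observation that the preimage hypothesis plays no role in the identity itself. One small refinement to your side remark: lift-independence of $h \mapsto h^{tr}$ needs $T$ to be a two-sided ideal, since a letter $a_j \in T$ forces all later partial products $s a_1 \cdots a_j \cdots a_i$ into $T$ via $TS \subseteq T$ as well as $ST \subseteq T$, not only via $sT \subseteq T$.
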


\begin{proof}
This follows from the fact that the mapping $f \mapsto f^{tr}$ is a morphism, and that $\tilde f
\mapsto f^{tr}$ is well defined. Furthermore, we see that this does not depend on the choice of
$f$ and $g$.
\end{proof}

We wish to extend all result we had previously obtained for
$Cayley(S,I)$ where $I$ was a $0$-minimal ideal of $S$. When we consider
$Cayley(S,J^{tr})$, $J^{tr}$ is a $0$-simple semigroup, however it's not
an ideal of $S$.

\begin{thm} \label{0-minimal is like J class}
Let $S$ be an aperiodic semigroup, let $J$ be a $\mathcal J$ class
of $S$, and let $T=S^1JS^1 \setminus J$. Denote by $\sim_T$ the
ideal congruence associated with $T$. Then $(J\cup T)/\sim_T$ is a
$0$-minimal ideal of $S/\sim_T$ and $Cayley(S,J^{tr}) \cong
Cayley(S/\sim_T,(J \cup T)/\sim_T)$. Furthermore, since $\sim_T$ is
$1-1$ on $J$ we have $J/\sim_T=J$.
\end{thm}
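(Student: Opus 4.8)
The plan is to separate the purely structural claims—that $(J\cup T)/\sim_T$ is a $0$-minimal ideal and that $J/\sim_T = J$—from the real content, the isomorphism of Cayley semigroups, and to establish that isomorphism by showing the two semigroups are literally the \emph{same} subsemigroup of the transformation monoid of a single tree. First I would record the facts about $T$. Since $J$ is a single $\mathcal J$ class, $J \cup T = S^1JS^1$ is the principal ideal generated by any $j \in J$, hence an ideal of $S$; the set $T$ of elements strictly $\mathcal J$-below $J$ is an ideal as well, so $J$ is directly above $T$ and $\sim_T$ is the Rees congruence collapsing $T$ to a zero $0$. This congruence is injective off $T$, in particular on $J$, giving $J/\sim_T = J$ and identifying $(J \cup T)/\sim_T$ with $J \cup \{0\}$ as a set. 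For $0$-minimality, the image of the ideal $J\cup T$ under the surjection $\pi\colon S \to S/\sim_T$ is an ideal of $S/\sim_T$, and it is nonzero since $J\neq\emptyset$; if $K$ is an ideal with $\{0\}\neq K \subseteq (J\cup T)/\sim_T$, then $K$ contains some $\bar j$ with $j\in J$, and because $S^1 j S^1 = S^1JS^1 = J\cup T$ the principal ideal of $\bar j$ is all of $(J\cup T)/\sim_T$, forcing $K=(J\cup T)/\sim_T$.

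Next I would identify $\bar I := (J \cup T)/\sim_T$ with $J^{tr}$: for nonzero classes $\bar{j_1},\bar{j_2}$ the product is $\overline{j_1 j_2}$, which equals $j_1j_2$ when $j_1j_2\in J$ and $0$ otherwise—exactly the trace multiplication. This is the alternative construction $J^{tr}\cong S^1JS^1/T$ noted just after the definition of the trace. Consequently $(J^{tr})^* = \bar I^*$ as trees, and both candidate semigroups act on this one tree.

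The heart of the argument is to show that on this common tree the generator $\varphi_s$ of $Cayley(S,J^{tr})$ and the generator $\varphi_{\pi(s)}$ of $Cayley(S/\sim_T,\bar I)$ are the \emph{same} transformation. For $w=[j_1,\ldots,j_n]\in (J^{tr})^*$ the defining formula gives
$$\varphi_s(w)=[\,\overline{sj_1},\,\overline{sj_1j_2},\,\ldots,\,\overline{sj_1\cdots j_n}\,],$$
where $\overline{(\cdot)}=\alpha(\cdot)$ for the ideal quotient $\alpha\colon J\cup T \to J^{tr}$. Each product $sj_1\cdots j_k$ lies in $S^1JS^1=J\cup T$ because $j_1\in J$, so $\alpha$ applies and agrees with $\pi$ under $\bar I\cong J^{tr}$. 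On the other side $\pi(s)$ acts genuinely on the ideal $\bar I$ of $S/\sim_T$, with $\pi(s)\,\bar{j_1}\cdots\bar{j_k}=\overline{sj_1\cdots j_k}$, so $\varphi_{\pi(s)}(w)$ is given by the same list. Hence $\varphi_s=\varphi_{\pi(s)}$ as maps $(J^{tr})^*\to (J^{tr})^*$; since $\pi$ is onto, $\{\varphi_s\}_{s\in S}$ and $\{\varphi_{\bar s}\}_{\bar s\in S/\sim_T}$ are the same set of transformations, so they generate the same subsemigroup, which is the claimed isomorphism.

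The step needing the most care is this last coincidence of actions, for two reasons. One must confirm that the products $sj_1\cdots j_k$ never leave the ideal $J\cup T$, so that the ad hoc $\alpha$-extension defining the action on $J^{tr}$ really is the restriction of the honest action of $S/\sim_T$ on $\bar I$; this is exactly where $j_1\in J$ is used. And one must check well-definedness under the quotient, namely that $s\sim_T s'$ forces $\varphi_s=\varphi_{s'}$ on $(J^{tr})^*$. The latter is immediate: the only nontrivial case is $s,s'\in T$, and then every $sj_1\in T$ yields $\overline{sj_1}=0$, so both generators send each nonempty word to a string of zeros.
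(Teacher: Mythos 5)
Your proposal is correct and follows essentially the same route as the paper: both identify $(J\cup T)/\sim_T$ with $J^{tr}$ and check on generators that $\varphi_s$ acting on $(J^{tr})^*$ coincides with the action of $\varphi_{\pi(s)}$ on $((J\cup T)/\sim_T)^*$ (with $s\in T$ giving the zero map $\varphi_0$), so the two semigroups are the same semigroup of transformations of one common tree. You simply carry out in detail the steps the paper declares obvious, namely the $0$-minimality of $(J\cup T)/\sim_T$, the containment $sj_1\cdots j_k\in J\cup T$, and well-definedness under $\sim_T$.
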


\begin{proof}
It is obvious that $(J\cup T)/\sim_T$ is a $0$-minimal ideal of
$S/\sim_T$. To show the isomorphism, consider $\varphi_s \in
Cayley(S,J^{tr})$. If $s \in T$, then the action of $\varphi_s$ on
$(J^{tr})^*$ is seen as sending any string to a string of zeros, so
in this case $\varphi_s$ corresponds to $\varphi_0$ where $0$ is the
$0$ element of $S/\sim_T$ and it sends any string in $((J\cup
T)/\sim_T)^*$ to a string of zeros.

If $s \notin T$ then the action of $\varphi_s$ on $(J^{tr})^*$ is
the same as in \\$Cayley(S/\sim_T,(J \cup T)/\sim_T)$.

We can extend this from $\{\varphi_s\}$, the generators of
$Cayley(S,J^{tr})$ to the whole semigroup, and obtain an isomorphism
$Cayley(S,J^{tr}) \cong Cayley(S/\sim_T,(J \cup T)/\sim_T)$.
\end{proof}

Let $Cayley(S,T)^{J^{tr}}$ denote the semigroup of all functions
$J^{tr} \to Cayley(S,T)$ with pointwise multiplication. We now
define a right action of $Cayley_{st-J}(S,J^{tr})$ on
$Cayley(S,T)^{J^{tr}}$. Let $\tilde f \in Cayley_{st-J}(S,J^{tr})$ and let
$H:J^{tr} \to Cayley(S,T)$. Let $j \in J$. If $[j] \in st(\tilde f,J)$ then
we let $H^{\tilde f}(j)=H(\tilde f(j))$. Otherwise we let $H^{\tilde f}(j)=\varphi_0$ (Recall
that $\varphi_0$, the function sending any string to a string of
zeros is the zero element of $Cayley(S,T)$).

Notice the slight abuse of notation for $\tilde f(j)$ where we identify
elements of the semigroup $S$ with strings of length 1. Formally,
instead of $\tilde f(j)$ we should have written $p(\tilde f([j]))$, where $p$ is
the projection $p([v_1])=v_1$.

To show that this is indeed an action, let $\tilde f,\tilde g \in
Cayley_{st-J}(S,J^{tr})$. We first need to show that
$H^{\tilde f\tilde g}(j)=(H^{\tilde f})^{\tilde g}(j)$ for every $j \in J$. Consider the following
cases:
\begin{enumerate}
\item
If $j=0$ then both sides are $H(0)$. $[0]$ is a $J$-stable word for
every function in $Cayley_{st-J}(S,T \cup J)$.
\item
If $j \in J$ and $[j] \in st(\tilde f\tilde g,J)$ then we also have $[j] \in
st(\tilde g,J)$, $j'=\tilde g(j)$ and $[\tilde f(j')] \in st(\tilde g,J)$. Now both sides of the
equation equal $H(\tilde f(j'))$.
\item
If $j \in J$ and $[j] \notin st(\tilde f\tilde g,J)$. This implies that either
$[j] \notin st(\tilde g,J)$ or $[\tilde g(j)] \notin st(\tilde f,J)$. In both cases this
gives $\varphi_0$ on both sides of the equation.
\end{enumerate}

It is clear that $(FG)^{\tilde h}=F^{\tilde h}G^{\tilde h}$ for $F,G:J^{tr} \to Cayley(S,T)$,
and $\tilde h \in Cayley_{st-J}(S,J^{tr})$.

We may now define the semidirect product $Cayley_{st-J}(S,J^{tr})
\rtimes Cayley(S,T)^{J^{tr}}$.

Now for the main theorem of this section.
\begin{thm}
Let $S$ be a finite semigroup. Let $T$ be an ideal of $S$, and let
$J$ be a $\mathcal J$ class of $S$ directly above $T$.
$Cayley_{st-J}(S,T\cup J)$ is isomorphic to a subsemigroup of $
Cayley_{st-J}(S,J^{tr}) \rtimes (Cayley(S,T))^{J^{tr}}$.
\end{thm}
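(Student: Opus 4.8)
The plan is to build an explicit injective homomorphism
$$\Psi\colon Cayley_{st-J}(S,T\cup J)\longrightarrow Cayley_{st-J}(S,J^{tr})\rtimes (Cayley(S,T))^{J^{tr}},\qquad \Psi(\tilde f)=(\tilde f^{tr},H_f).$$
The first coordinate is the morphism $\tilde f\mapsto \tilde f^{tr}$ already constructed. The second coordinate $H_f\colon J^{tr}\to Cayley(S,T)$ is meant to record the residual action of $f$ on the $T$-suffix of a word, indexed by the product of its $J$-prefix: for $j\in J$ with $[j]\in st(f,J)$ I put $H_f(j)=f_{[j]}|_{T^*}$, the restriction to $T^*$ of the residual $f_{[j]}$ of Theorem \ref{fractalness} (this lies in $Cayley(S,T)$ because $T$ is an ideal); I put $H_f(0)=f|_{T^*}$; and $H_f(j)=\varphi_0$ when $j\in J$ but $[j]\notin st(f,J)$. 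The last clause is forced, and it is exactly the $\varphi_0$ appearing in the definition of the right action $H^{\tilde f}$. Well-definedness on $\sim$-classes is then easy: every word of $T^*$, and every word $[j]u$ with $u\in T^*$ and $[j]\in st(f,J)$, is $f$-$J$-stable, so $\sim$-equivalent representatives agree on all of them; reading off their action gives $H_f(0)=f|_{T^*}$ and $H_f(j)=f_{[j]}|_{T^*}$ independently of the representative, and Corollary \ref{action_on_suffix_depends_on_j} guarantees the residual depends only on $j$.

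The homomorphism property splits by coordinate. The first is a morphism by the lemma that $f\mapsto f^{tr}$ descends to $Cayley_{st-J}$. For the second I would first record the chain rule
$$(fg)_v=f_{g(v)}\circ g_v,$$
which drops out of Theorem \ref{fractalness} by expanding $(fg)(vw)=f(g(v)\circ g_v(w))=f(g(v))\circ f_{g(v)}(g_v(w))$. Specialising to $v=[j]$, restricting to $T^*$, and identifying $g([j])$ with $[\tilde g^{tr}(j)]$, this gives $H_{fg}(j)=H_f(\tilde g^{tr}(j))\cdot H_g(j)$ whenever $[j]$ and $[\tilde g(j)]$ are stable, which is precisely $(H_f^{\tilde g^{tr}}\cdot H_g)(j)$, the second coordinate of $\Psi(f)\Psi(g)$ in the semidirect product. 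When $[j]\notin st(g,J)$ or $[\tilde g(j)]\notin st(f,J)$ both sides collapse to $\varphi_0$, since $\varphi_0$ is the zero of $Cayley(S,T)$ and since instability of the inner factor propagates to $fg$; this is the case that makes the $\varphi_0$ clause essential.

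Finally injectivity, which I expect to be the main obstacle. Suppose $\Psi(\tilde f)=\Psi(\tilde g)$; equality of first coordinates forces $st(f,J)=st(g,J)$ and agreement of the $J$-traces there. For a stable $w$ I would decompose through its $J$-prefix $w_J$ as $f(w)=f(w_J)\circ f_{w_J}(w_T' w')$: the $J$-part $f(w_J)$ is read off from $f^{tr}=g^{tr}$, while $f_{w_J}=f_{[j]}$, with $j$ the product of $w_J$, which lies in $J$ because an ideal absorbs products and the $J$-prefix of $f(w)$ has full length. The difficulty is to recover the suffix action $f_{[j]}(w_T' w')$ from $H_f$ alone. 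This is immediate when the suffix lies in $T^*$, but a stable word may contain $J$-letters buried after the first $T$-letter, and on such a letter the residual acts by multiplying a $T$-valued product into $J$, data not literally stored in $f_{[j]}|_{T^*}$. I would resolve this by transporting the suffix problem to $J^{tr}$ via Theorem \ref{0-minimal is like J class} and invoking the $0$-minimal-ideal analysis of Lemmas \ref{almost simple} and \ref{end cases}, which determine exactly when and how a descent into $T$ (a zero of $J^{tr}$) is created; these constraints force the buried-$J$ contributions to be pinned down by the $J$-trace together with the $T^*$-restricted residual, so that $f(w)=g(w)$ and hence $\tilde f=\tilde g$.
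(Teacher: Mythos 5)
Your construction is the same one the paper uses: the map $\Pi(\tilde f)=(f^{tr},\hat f)$ with exactly the same three-case definition of the second coordinate ($H_f(0)=f|_{T^*}$, $H_f(j)=f_{[j]}|_{T^*}$ on stable $j$, $\varphi_0$ otherwise), the same well-definedness check, and the same case analysis for the homomorphism property. Your chain rule $(fg)_v=f_{g(v)}\circ g_v$, read off directly from Theorem \ref{fractalness}, is in fact a slightly cleaner route to the key identity $(fg)_{[j]}=f_{[\bar t j]}g_{[j]}$ than the paper's explicit generator computation via Lemma \ref{finiteness_technical_lemma}; up through the morphism property your argument is complete and correct.

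The gap is exactly where you predicted it, and your proposed repair does not close it. For a stable word $w=w_1\circ w_2$ with $w_1\in J^*$, the suffix $w_2$ lies only in $T(T\cup J)^*$, not in $T^*$. Corollary \ref{action_on_suffix_depends_on_j} reduces injectivity to showing $f_{[j]}(w_2)=g_{[j]}(w_2)$, but the datum $\hat f(j)=f_{[j]}|_{T^*}$ settles this only when $w_2\in T^*$. Concretely, if $w_2=[t,u,\ldots]$ with $t\in T$, $u\in J$, and $f_{[j]}=\varphi_{c_n}\cdots\varphi_{c_1}$, then the second letter of $f_{[j]}(w_2)$ is $xu$ where $x=(c_n\cdots c_1t)(c_{n-1}\cdots c_1t)\cdots(c_1t)\in T$; the pair $(f^{tr},\hat f)$ records $xt'$ for all $t'\in T$ but says nothing about $xu$, and two elements of $T$ can induce the same left multiplication on $T$ without inducing the same left multiplication on $J$. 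Your plan to transport this to $J^{tr}$ via Theorem \ref{0-minimal is like J class} and Lemmas \ref{almost simple} and \ref{end cases} cannot work as routed: the trace map sends every letter of $T$ to $0$, so it erases precisely the $T$-valued products in question, and those two lemmas only govern where zeros are created inside the $0$-minimal ideal, not which elements of $T$ appear. So the buried-$J$ case of injectivity remains unproved in your write-up.

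For what it is worth, this is also the one point where the paper is least careful: its injectivity argument ends with the chain $f_{w_1}(w_2)=f_{[j]}(w_2)=\hat f(j)(w_2)=\hat g(j)(w_2)$, silently applying $\hat f(j)$, which is defined only on $T^*$, to $w_2\in T(T\cup J)^*$; as written it is complete only for stable words whose suffix after the $J$-prefix lies in $T^*$ (for instance words in the image of $Cayley(S,T\cup J)$, which is what the subsequent finiteness theorem actually uses). So you have correctly isolated a genuine subtlety; what is missing is either an actual argument that $(f^{tr},\hat f)=(g^{tr},\hat g)$ forces $f_{[j]}=g_{[j]}$ on all of $T(T\cup J)^*$, or a restriction of the congruence $\sim$ (equivalently of $st(f,J)$) to words of the form $w_J w_T$ with $w_T\in T^*$, under which both your proof and the paper's go through verbatim.
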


Before proceeding with the proof we would like to show the reasoning
behind this theorem. The action of $\tilde f$ on a stable word $w$ can be
thought of as two actions. The first is modifying the prefix of $w$
in $J$ to some other prefix in $J$. This can be thought of as
$\tilde f\mapsto f^{tr}$. The second part is the action of $\tilde f$ is on the
remainder, the part in $T^*$. In general this action depends on the
prefix of $w$ in $J$. However, Corollary
\ref{action_on_suffix_depends_on_j} says that this action only
depends on an element in $J$.

\begin{proof}

Let $\tilde f\in Cayley_{st-J}(S,T \cup J)$ and let $f$ be any element in $Cayley(S,J \cup T)$
 that maps onto $\tilde f$. Denote by $\hat f$ the function
$\hat f:J^{tr} \to Cayley(S,T)$ defined by
$$(\hat f(j))(w)=\begin{cases}f(w) & j=0 \\ f_{[j]}(w) & [j]\in st(f,J),j\in J\\\varphi_0(w) & [j] \notin st(f,j),j \in J\end{cases}$$
restricted to $T^*$ (i.e. $w \in T^*$). This is well defined. The
first case because every word in $T^*$ is $J-$stable. The second
case, since if $f(j) \in J$, then for every $w=[t_1,\ldots,t_k]\in
T^*$ we can express $f_{[j]}(w)$ as the $k$-length suffix of
$f([j,t_1,t_2,\ldots,t_k])$. Furthermore this does not depend on the choice of $f$.


Let $\Pi:Cayley_{st-J}(S,T\cup J) \to Cayley_{st-J}(S,J^{tr})
\rtimes Cayley(S,T)^{J^{tr}}$ be given by $\Pi(\tilde f)\mapsto
(f^{tr},\hat f)$, $\Pi$ is obviously well-defined. To show that it
is a semigroup morphism, consider $\tilde f,\tilde g \in Cayley_{st-J}(S,T \cup
J)$. We have $\Pi(\tilde f)\Pi(\tilde g)=(f^{tr},\hat f)(g^{tr},\hat g)$.
We have seen that $(fg)^{tr}=f^{tr}g^{tr}$. In order to show that $\Pi$ is a
morphism, it remains to show that for every $j \in J^{tr}$ the
equation $(\widehat{fg}(j))(w)=({\hat f}^{(g^{tr})}(j)\hat g(j))(w)$
holds for every $w \in T^*$.

If we choose $j$ as the zero element of $J^{tr}$ the equation is
immediate. Choose a $j \in J$. If $[j] \notin st(g^{tr},J)$ then we
also have $[j]\notin st((fg)^{tr},J)$, so
$\widehat{fg}(j)=\varphi_0={\widehat f}^{(g^{tr})}(j)
\varphi_0={\hat f}^{(g^{tr})}(j)\widehat g(j)$. If $[j]\in
st(g^{tr},J)$, $g^{tr}([j])=[j']$ and $[j']\notin st(f^{tr},J)$,
then as before, we have $[j] \notin st((fg)^{tr},J)$ so
$\widehat{fg}(j)=\varphi_0=\varphi_0 \widehat g(j)={\hat
f}^{(g^{tr})}(j)\widehat g(j)$. Finally, we consider the case $[j]\in
st((fg)^{tr},J)$.

In this case $\widehat{fg}(j)=(fg)_{[j]}$ , $\hat g(j)=g_{[j]}$, and
$(\widehat{f})^{(g^{tr})}(j)=f_{[g(j)]}$, all restricted to $T^*$.
Let $\varphi_{s_n}\cdots\varphi_{s_1}$ be any representative of $f$,
and $\varphi_{t_m}\cdots\varphi_{t_1}$ a representative of $g$. Let
$\bar t=t_m\cdots t_1$. By Lemma \ref{finiteness_technical_lemma}
$\widehat{fg}(j) = (fg)_{[j]} =
(\varphi_{s_n}\cdots\varphi_{s_1}\varphi_{t_m}\cdots\varphi_{t_1})_{[j]}
=\\ \varphi_{(s_n\cdots s_1t_m\cdots
t_1j)}\cdots\varphi_{(s_1t_m\cdots t_1j)}\varphi_{(t_m\cdots
t_1j)}\cdots\varphi_{(t_1j)} = f_{[\bar tj]}g_{[j]} = \hat f(\bar
tj)\hat g(j)$. Since $[j]\in st(g^{tr},J)$, we have
$g^{tr}(j)=t_m\cdots t_1j=\bar tj\in J$. Furthermore, since  $[j]\in
st((fg)^{tr},J)$ we have $g^{tr}([j]) \in st(f^{tr},J)$, i.e. $[\bar
t j] \in st(f^{tr},J)$.
This completes the proof that $\Pi$ is a morphism.

We now show that $\Pi$ is $1-1$. Let $\tilde f,\tilde g\in Cayley_{st-J}(S,T \cup
J)$ and assume that $\Pi(\tilde f)=\Pi(\tilde g)$. As before we will write
$\Pi(\tilde f)=(f^{tr},\hat f)$ and $\Pi(\tilde g)=(g^{tr},\hat g)$. Let $w \in
st(f,J)$. We need to show that $\tilde f(w)=\tilde g(w)$ (this will show that $w
\in st(g,J)$). Furthermore, since $f(w)=\tilde f(w)$ this is the same as showing
that $f(w)=g(w)$. If $w \in T^*$ then this is clear since $f(w)=(\hat
f(0))(w)=(\hat g(0))(w)=g(w)$. If $w \in J^*$, then (recall that $w$
is in $st(f,J)$ so $f(w)$ is in $J^*$) $f(w) = f^{tr}(w) =
g^{tr}(w)$. Since $g^{tr}(w)$ has no zeros (because
$g^{tr}(w)=f^{tr}(w)$), we have that $g^{tr}(w)=g(w)$ . We now
assume that $f(w)$ has a prefix in $J^*$ and a suffix in $T^*$.
Write this as $f(w)=w'=w_1'\circ w_2'$ and write a decomposition for $w$
as $w=w_1\circ w_2$ such that $|w_1|=|w_1'|$ and $|w_2|=|w_2'|$. $w\in
st(f,J)$ implies that $w_1\in J^*$ and $w_2\in T(T \cup J)^*$ (the
first letter of $w_2$ is in $T$). We need to show that $g(w)=w'$. By
definition we have $g(w)=g(w_1)\circ g_{w_1}(w_2)$. Recall that $f(w_1)\in
J^*$, which
implies that $f(w_1)=f^{tr}(w_1)=g^{tr}(w_1)=g(w_1)$, 
It remains to show that $g_{w_1}(w_2)=w_2'=f_{w_1}(w_2)$. Let
$w_1=[(a_1,b_1),\cdots,(a_n,b_n)]$, and let
$j=p\varphi_1(w_1)=(a_1,b_n)$. Corollary
\ref{action_on_suffix_depends_on_j} shows that $f_{w_1}=f_{[j]}$,
and $g_{w_1}=g_{[j]}$. In particular the restriction of this
equation to $T^*$ remains valid. Summing up, we have
$f_{w_1}(w_2)=f_{[j]}(w_2)=\hat f(j)(w_2)=\hat
g(j)(w_2)=g_{[j]}(w_2)=g_{w_1}(w_2)$ which completes the proof.
\end{proof}

\begin{cor} \label{finite stable}
Let $S$ be an aperiodic semigroup, $T$ an ideal of $S$, and $J$ a
$\mathcal J$ class directly above $T$. If $Cayley(S,T)$ is finite
then $Cayley_{st-J}(S,T \cup J)$ is finite.
\end{cor}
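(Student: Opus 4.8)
The plan is to read off finiteness directly from the embedding established in the previous theorem, namely that $Cayley_{st-J}(S,T\cup J)$ is isomorphic to a subsemigroup of $Cayley_{st-J}(S,J^{tr}) \rtimes (Cayley(S,T))^{J^{tr}}$, which in particular gives
$$Cayley_{st-J}(S,T\cup J) \prec Cayley_{st-J}(S,J^{tr}) \rtimes (Cayley(S,T))^{J^{tr}}.$$
Since a semigroup dividing a finite semigroup is finite, and since the semidirect product $P \rtimes Q$ of two finite semigroups is finite (its underlying set is the Cartesian product $P \times Q$, so $|P \rtimes Q| = |P|\cdot|Q|$), it suffices to prove that each of the two factors $Cayley_{st-J}(S,J^{tr})$ and $(Cayley(S,T))^{J^{tr}}$ is finite.

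The second factor is handled by the hypothesis. By definition $(Cayley(S,T))^{J^{tr}}$ is the set of all functions $J^{tr} \to Cayley(S,T)$ under pointwise multiplication. As $J$ is a $\mathcal J$ class of the finite semigroup $S$, the trace $J^{tr}$ is finite, and $Cayley(S,T)$ is finite by assumption; hence there are only $|Cayley(S,T)|^{|J^{tr}|}$ such functions, so this factor is finite.

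For the first factor I would invoke Theorem \ref{0-minimal is like J class}, which gives $Cayley(S,J^{tr}) \cong Cayley(S/\sim_T,(J\cup T)/\sim_T)$, with $(J\cup T)/\sim_T$ a $0$-minimal ideal of $S/\sim_T$. The quotient $S/\sim_T$ is a homomorphic image of the aperiodic semigroup $S$, hence is itself finite and aperiodic, so Theorem \ref{0-simple is finite} applies and shows that $Cayley(S/\sim_T,(J\cup T)/\sim_T)$, and therefore $Cayley(S,J^{tr})$, is finite. Finally $Cayley_{st-J}(S,J^{tr})$ is the quotient of $Cayley(S,J^{tr})$ by the stability congruence $\sim$, and a quotient of a finite semigroup is finite; this yields finiteness of the first factor and completes the argument.

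The content here is essentially bookkeeping: every hard step has already been carried out, in Theorem \ref{0-simple is finite} (finiteness over a $0$-minimal ideal) and in the semidirect-product embedding. The only points requiring a moment's care are checking that aperiodicity descends to the Rees quotient $S/\sim_T$, so that Theorem \ref{0-simple is finite} is applicable, and recognizing $Cayley_{st-J}(S,J^{tr})$ as a homomorphic image of $Cayley(S,J^{tr})$ rather than something larger. Neither is a genuine obstacle, so I expect the proof to remain short.
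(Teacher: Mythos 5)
Your proposal is correct and takes essentially the same approach as the paper: the paper's (much terser) proof likewise reduces, via the semidirect-product embedding of the preceding theorem, to showing $Cayley_{st-J}(S,J^{tr})$ is finite, and obtains this from Theorem \ref{0-minimal is like J class} together with Theorem \ref{0-simple is finite}, noting that aperiodicity passes to the quotient $S/\sim_T$. Your write-up simply makes explicit the steps the paper leaves implicit, namely the finiteness of the pointwise-function factor $(Cayley(S,T))^{J^{tr}}$ and the fact that $Cayley_{st-J}(S,J^{tr})$ is a quotient of $Cayley(S,J^{tr})$.
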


\begin{proof}
It is enough to show that $Cayley_{st-J}(S,J^{tr})$ is finite. This
follows from Theorem \ref{0-simple is finite} and Theorem
\ref{0-minimal is like J class}. When applying Theorem
\ref{0-minimal is like J class} we recall that the quotient of a finite
aperiodic semigroup is also aperiodic.


\end{proof}

\begin{thm}
Let $S$ be an aperiodic semigroup, $T$ an ideal of $S$, and $J$ a
$\mathcal J$ class directly above $T$. If $Cayley(S,T)$ is finite
then $Cayley(S,T\cup J)$ is finite.
\end{thm}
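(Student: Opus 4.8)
The plan is to combine Corollary \ref{finite stable} with a finiteness argument for the fibers of the canonical quotient. By Corollary \ref{finite stable} the semigroup $Cayley_{st-J}(S,T\cup J)$ is finite, and by definition of the congruence $\sim$ there is a canonical surjective morphism $q:Cayley(S,T\cup J)\to Cayley_{st-J}(S,T\cup J)$, $f\mapsto\tilde f$. Since the codomain is finite, it suffices to prove that every fiber $q^{-1}(\tilde f)$ is finite: then $Cayley(S,T\cup J)=\bigcup_{\tilde f}q^{-1}(\tilde f)$ is a finite union of finite sets, and we are done.

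To bound a single fiber I would fix $\tilde f$ and show that any $f$ with $q(f)=\tilde f$ is pinned down by finitely many data. Two elements of the fiber already agree on all $J$-stable words, and in particular share the same $f^{tr}\in Cayley(S,J^{tr})$ and the same $\hat f:J^{tr}\to Cayley(S,T)$, both of which depend only on $\tilde f$. What remains is the action on non-stable words. For any $w$ the output $f(w)$ splits as a prefix in $J^*$ followed by a suffix in $T^*$; the $J$-prefix can be read off from $f^{tr}$, which is a finite object, so the only freedom left is the $T$-suffix, which records how $f$ pushes $J$-letters down into $T$ and then acts once everything lies in $T$.

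The key step is therefore to show that this drop behaviour takes only finitely many forms. Here I would use the fractal description (Theorem \ref{fractalness}) together with Corollary \ref{action_on_suffix_depends_on_j}: after $f$ has consumed a stable $J$-prefix $u$, its restriction $f_u$ equals $f_{[j]}$ for a single $j\in J$ determined by $u$, so only finitely many restrictions can govern what happens below $J$. Each such $f_{[j]}$, once it has created a zero, sends the remaining input into $T^*$, where its behaviour is captured by an element of the finite semigroup $Cayley(S,T)$ together with a single left multiplier lying in the finite set $T$ (the element by which a dropped $J$-letter is multiplied). Because $J$ and $T$ are finite and each local datum ranges over a finite set, the non-stable part of $f$ can be encoded by a point of an explicitly finite set, so $q^{-1}(\tilde f)$ is finite.

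The main obstacle I expect is precisely this last point: making rigorous that the information carried by $f$ on non-stable words, beyond its stable class $\tilde f$, is finitely bounded. The difficulty is that when a $J$-letter $z$ is dropped the resulting output letter $m\cdot z$ involves the action of an element $m\in T$ on $J$, which is genuinely new data not visible on stable words, so $q$ is not injective in general. One must check that, as $w$ and the decomposition $f=\varphi_{s_n}\cdots\varphi_{s_1}$ vary, these multipliers and the ensuing $T$-level maps range over a finite set; for this the finiteness of $Cayley(S,T)$ and the $0$-simple structure of $J^{tr}$, via the bounded-number-of-drops phenomenon of Corollary \ref{simple finiteness}, are exactly what is needed.
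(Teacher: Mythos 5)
Your overall strategy---quotient onto the finite semigroup $Cayley_{st-J}(S,T\cup J)$ of Corollary \ref{finite stable} and then bound the fibers of $q$---founders at exactly the point you flag, and the repair you gesture at does not go through as stated. The appeal to Corollary \ref{action_on_suffix_depends_on_j} is illegitimate there: that corollary has the hypothesis $f(v)\in (A\times B)^+$, i.e.\ it applies only to \emph{stable} all-$J$ prefixes, whereas the restriction that governs the $T$-suffix of $f(w)$ is $f_{uz}$, taken at the first prefix $uz$ whose image has dropped into $T$ --- which is by definition not stable, so $f_{uz}$ need not equal any $f_{[j]}$. By Theorem \ref{fractalness}, $f_{uz}=\varphi_{c_n}\cdots\varphi_{c_1}$ with $c_i=p\varphi_{s_i}\cdots\varphi_{s_1}(uz)$; the $c_i$ can remain in $J$ for many initial indices before falling into $T$, and this is a perfectly general element of $Cayley(S,T\cup J)$ acting on $(T\cup J)^*$. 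So your ``finitely many local data'' encoding quantifies over restrictions that are themselves arbitrary elements of the very semigroup whose finiteness is being proved --- the argument is circular. (The multiplier $m\in T$ is indeed finite data, and your observation that $f^{tr}$ and $\hat f$ depend only on $\tilde f$ is correct; but Corollary \ref{simple finiteness} bounds the number of zero-creating \emph{generators inside one word}, not the number of distinct functions that the post-drop tails of such words can induce, which is what fiber-finiteness needs.)

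The paper avoids fibers altogether and instead proves a uniform bound on word length, which is the mechanism your sketch is missing. Given any presentation $f=\varphi_{s_n}\cdots\varphi_{s_1}$, apply Corollary \ref{simple finiteness} to $f^{tr}\in Cayley(S,J^{tr})$ to write $f^{tr}=g_{k+1}^{tr}\varphi_{s_{i_k}}^{tr}g_k^{tr}\cdots\varphi_{s_{i_1}}^{tr}g_1^{tr}$ with $k$ bounded by a constant depending only on $J$ and with the $g_i^{tr}$ never creating zeros. The latter condition means that, inside this product, each $g_i$ only ever acts on $g_i$-$J$-stable words, so $g_i$ may be identified with its class $\tilde g_i$ in the finite semigroup $Cayley_{st-J}(S,T\cup J)$ and replaced by a representative that is a product of at most $l$ generators, without changing $f(w)$ for any $w$. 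This rewrites every $f$ as a product of at most $N$ generators for a fixed $N$, and finiteness follows since the generating set is finite. In effect, the paper converts your ``bounded number of drops'' into bounded generator length \emph{via} the stable quotient; any rigorous version of your fiber argument would have to prove essentially this compression step anyway, so you should adopt it directly rather than attempt to enumerate the non-stable behaviour pointwise.
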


\begin{proof}
We will do this by showing that for some constant $N$, every $f\in
Cayley(S,T\cup J)$ can be written as a product of $N$ or less
generators.

Let $f\in Cayley(S,T \cup J)$ be given, and let
$f=\varphi_{s_n}\cdots\varphi_{s_2}\varphi_{s_1}$ be a presentation
of $f$. Since the mapping $f \mapsto f^{tr}$ is a morphism
$Cayley(S,T \cup J) \to Cayley(S,J^{tr})$, we have
$f^{tr}=\varphi_{s_n}^{tr}\cdots\varphi_{s_2}^{tr}\varphi_{s_1}^{tr}$,
an element in $Cayley(S,J^{tr})$.

We observe here that the elements $\{\varphi_s^{tr}\}_{s \in S}$ are
a generating set for $Cayley(S,J^{tr})$.

By Corollary \ref{simple finiteness} there is a constant $N'$
(depending only on $J$) such that we can write $f^{tr} =
g_{k+1}^{tr}\varphi_{s_k}^{tr}g_{k}^{tr}\varphi_{s_{k-1}}^{tr}
\cdots \varphi_{s_2}^{tr}g_{2}^{tr}\varphi_{s_1}^{tr}g_{1}^{tr}$ for
some $k<N'$, where the $g_{i}^{tr}$'s never create new zeros on any
input.

It is obvious that for any word $w\in (T \cup J)^*$, $w$ is
$g_i$-$J$ stable if and only if $w^{tr}$ is $g_i^{tr}$-$J$ stable.
In other words, the $g_i^{tr}$ never create new zeros if and only if
the $g_i$ never map a letter from $J$ to a letter in $T$.

Since the $g_i^{tr}$ never create zeros, we may say that the $g_i^{tr}$ only act
on $g_i^{tr}$-$J$ stable words.
This means that inside the above presentation of $f$, the $g_i$ will
only get to act on $J$-$g_i$ stable words, thus we can identify $g_i$ with $\tilde g_i$,
its image in $Cayley_{st-J}(S,T \cup J)$.

By Corollary  \ref{finite stable} $Cayley_{st-J}(S,T\cup J)$ is
finite, so we may assume there is another constant $l$ such that
every $g_i$ can be written as (or replaced by) a product of $l$ or less generators,
without changing the value of $f(w)$ for any word $w$.

This shows that there is a number $N$ such that every $f \in
Cayley(S,T\cup J)$ can be written as a product of $N$ or less
generators.
\end{proof}

Finally, we have the main theorem of this section.
\begin{prop}
If $S$ is an aperiodic semigroup, $Cayley(S)$ is finite.
\end{prop}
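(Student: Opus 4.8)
The plan is to establish finiteness by induction along a maximal chain of ideals of $S$, using at each step the preceding theorem: if $Cayley(S,T)$ is finite and $J$ is a $\mathcal J$ class directly above $T$, then $Cayley(S,T\cup J)$ is finite. First I would reduce to the case in which $S$ carries both an identity and a zero. By the functor results, $Cayley(S)\prec Cayley((S^1)^0)$; the semigroup $(S^1)^0$ is finite and aperiodic and has both an identity and a zero, and a divisor of a finite semigroup is finite, so it suffices to treat this case (which is the standing assumption of the section). With an identity present we have $S^1=S$, hence $Cayley(S)$ already acts on $S^*=(S^1)^*$ and coincides with $Cayley(S,S)$ in the restriction notation.

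Next I would build a chain of ideals $\{0\}=T_0\subsetneq T_1\subsetneq\cdots\subsetneq T_k=S$ whose successive differences are single $\mathcal J$ classes, each directly above the preceding ideal. Since $S$ is finite it has only finitely many $\mathcal J$ classes; I list them $J_0,J_1,\ldots,J_k$ along a linear extension of $\leq_{\mathcal J}$ in which $\mathcal J$-smaller classes come first, so that $J_0=\{0\}$. Setting $T_i=J_0\cup\cdots\cup J_i$, each $T_i$ is a downward-closed union of $\mathcal J$ classes and is therefore an ideal; since $T_{i+1}=T_i\cup J_{i+1}$ is again such a union, $J_{i+1}$ is directly above $T_i$ in the sense of the definition.

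The induction then proceeds cleanly. For the base case, $\{0\}^*$ consists of all-zero strings, and every generator fixes such strings because $0$ is a zero of $S$; hence $Cayley(S,T_0)=Cayley(S,\{0\})$ is trivial and in particular finite. For the inductive step, assuming $Cayley(S,T_i)$ is finite, the preceding theorem applied to the ideal $T_i$ and the $\mathcal J$ class $J_{i+1}$ directly above it gives that $Cayley(S,T_{i+1})=Cayley(S,T_i\cup J_{i+1})$ is finite. After $k$ steps we obtain that $Cayley(S,T_k)=Cayley(S,S)=Cayley(S)$ is finite.

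I expect no serious obstacle at this final stage: the genuine difficulty has already been discharged inside the preceding theorem and its prerequisites (Theorem \ref{0-simple is finite}, Theorem \ref{0-minimal is like J class}, and the semidirect-product embedding into $Cayley_{st-J}(S,J^{tr})\rtimes Cayley(S,T)^{J^{tr}}$), leaving here only the bookkeeping of the chain and the trivial base case. The single point requiring care is that every reduction invoked—adjoining an identity and a zero, and the passages to quotients and subsemigroups implicit in the cited results—preserves aperiodicity, so that the aperiodicity hypothesis of each invoked theorem continues to hold throughout the induction.
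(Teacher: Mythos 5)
Your proof is correct and follows essentially the same route as the paper: order the $\mathcal J$ classes along a linear extension of $\leq_{\mathcal J}$ so that each partial union is an ideal, then induct using the theorem that finiteness of $Cayley(S,T)$ implies finiteness of $Cayley(S,T\cup J)$ for $J$ directly above $T$. The only cosmetic difference is at the base: you start from the trivial semigroup $Cayley(S,\{0\})$ and let the first inductive step handle the $0$-minimal ideal, whereas the paper invokes Theorem \ref{0-simple is finite} directly for $J_1\cup J_2$.
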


\begin{proof}
Choose an ordering of the $\mathcal J$ classes of $S$ $\{J_i\}_{i
=1}^{n}$, such that for every $1 \leq k \leq n$ we have
$\cup_{i=1}^{k}J_i$ is an ideal of $S$. It is obvious that such an
ordering exists and that $J_1 \cup J_2$ is a $0$-minimal ideal.
We have shown that $Cayley(S,J_1 \cup J_2)$ is finite, and
the last theorem shows that $Cayley(S,\cup_{i=1}^{k}J_i)$ is finite
for every $k$. This completes the proof.
\end{proof}

\section{Aperiodicity}

\begin{lemma}
Let $S$ be a finite semigroup. Let $T$ be an ideal of $S$ such that
$f^n=f^{n+1}$ for every $f \in Cayley(S,T)$ for some $n \in \mathbb
N$. Let $J$ be a $\mathcal J$ class of $S$ directly above $T$, such
that $f^l=f^{l+1}$ for every $f \in Cayley(S,J^{tr})$ for some $l
\in \mathbb N$. Then $f^{n+l}=f^{n+l+1}$ for all $f \in Cayley(S,T
\cup J)$.
\end{lemma}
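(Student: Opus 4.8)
The plan is to prove the index bound pointwise: I would show that $f^{n+l}(w)=f^{n+l+1}(w)$ for every $f\in Cayley(S,T\cup J)$ and every $w\in (T\cup J)^*$, which suffices since elements of $Cayley(S,T\cup J)$ are determined by their action on words. The guiding principle is exactly the decomposition underlying the semidirect-product embedding theorem proved above: the action of $f$ on a word splits into its effect on the prefix lying in $J$ (governed by $Cayley(S,J^{tr})$, of index $l$) and its effect on the $T$-tail (governed by $Cayley(S,T)$, of index $n$). I would control the two halves separately, and they will contribute $l$ and $n$ to the total index $n+l$.

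First I would stabilize the $J$-prefix. Writing $f^{tr}$ for the image of $f$ in $Cayley(S,J^{tr})$, the fact that $f\mapsto f^{tr}$ is a morphism gives $(f^k(w))^{tr}=(f^{tr})^k(w^{tr})$ for all $k$. Since $Cayley(S,J^{tr})$ has index $l$, we have $(f^{tr})^l=(f^{tr})^{l+1}$, hence $(f^{tr})^k=(f^{tr})^l$ for all $k\ge l$, so the word $(f^{tr})^k(w^{tr})$ is constant for $k\ge l$. For $k\ge 1$ the word $f^k(w)$ lies in the image of $Cayley(S,T\cup J)$, so it decomposes as $w_J^{(k)}w_T^{(k)}$ with $w_J^{(k)}\in J^*$ and $w_T^{(k)}\in T^*$, and $w_J^{(k)}$ is precisely the nonzero prefix of $(f^k(w))^{tr}$. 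Therefore $w_J^{(k)}$ equals a fixed word $u\in J^*$ for every $k\ge l$. Comparing $f^{k+1}(w)=u\,w_T^{(k+1)}$ with the fractal decomposition $f(u\,w_T^{(k)})=f(u)\circ f_u(w_T^{(k)})$ from Theorem \ref{fractalness} and matching the first $|u|$ letters (lengths being preserved by tree endomorphisms, $|f(u)|=|u|$) shows $f(u)=u$.

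Next I would read off the tail action. For $k\ge l$ set $v_k=w_T^{(k)}\in T^*$, so $f^k(w)=u\,v_k$. The previous comparison gives $v_{k+1}=f_u(v_k)$, where $f_u$ is the fractal section of $f$ at $u$. Since $T$ is an ideal, every generator $\varphi_s$ maps $T^*$ into $T^*$, so $f_u$ restricts to an element of $Cayley(S,T)$. Hence $v_{l+j}=f_u^{j}(v_l)$ for all $j\ge 0$, and because $Cayley(S,T)$ has index $n$ we obtain $v_{n+l}=f_u^{n}(v_l)=f_u^{n+1}(v_l)=v_{n+l+1}$. Consequently $f^{n+l}(w)=u\,v_{n+l}=u\,v_{n+l+1}=f^{n+l+1}(w)$, as required (one assumes $n,l\ge 1$, the degenerate cases being immediate).

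The real obstacle is the first step: a priori the length of the $J$-prefix of $f^k(w)$ is only non-increasing and could keep dropping at unpredictable times, so that no stable configuration need ever arise. What rescues the argument is that aperiodicity of the $J^{tr}$-action forces this length to freeze after at most $l$ steps \emph{uniformly} in $w$, after which $f$ genuinely fixes the prefix $u$ and everything collapses to the $T$-tail. This is the analytic core that the semidirect-product embedding packages structurally; alternatively one could invoke that embedding together with the fact that a (right) semidirect product of semigroups of indices $l$ and $n$ has index at most $n+l$ to bound the index of $Cayley_{st-J}(S,T\cup J)$, and then this same prefix-stabilization argument is precisely what lifts the bound from the stable quotient back to all of $Cayley(S,T\cup J)$.
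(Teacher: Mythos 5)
Your proposal is correct and takes essentially the same route as the paper's proof: both stabilize the $J$-prefix via the morphism $f\mapsto f^{tr}$ and the index-$l$ hypothesis on $Cayley(S,J^{tr})$ (the paper sets $w_0=f^l(w)$, decomposes $w_0=w_J\circ w_T$, and deduces $f^{tr}(w_J)=w_J$), and then apply the index-$n$ hypothesis on $Cayley(S,T)$ to the fractal section $f_{w_J}$ acting on the $T$-tail. Your write-up only makes explicit two steps the paper leaves implicit, namely why $f(u)=u$ and why the section $f_u$ restricts to an element of $Cayley(S,T)$.
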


\begin{proof}
Let $w \in (T \cup J)^*$. Let $f \in Cayley(S,T \cup J)$. Consider
$w_0=(f)^l(w)$. $w_0$ has a prefix $w_J$ in $J^*$ and a suffix $w_T$
in $T^*$. This implies that $(f^{tr})^l(w^{tr})$ has the same prefix
$w_J$ in $J^*$ as $f^l(w)$, and a suffix of zeros. Furthermore,
since $(f^{tr})^l(w^{tr})=(f^{tr})^{l+1}(w^{tr})$, this gives
$f^{tr}(w_J)=w_J$.

We now have
\begin{align*}
&f^{n+l}(w) =\\
&f^n(w_0) =\\
&f^{n}(w_J\circ w_T) = \\
&f^{n-1}(f(w_J)\circ f_{w_J}(w_T)) =\\
&f^{n-1}(w_J\circ f_{w_J}(w_T)) =\\
&\cdots = \\
&(w_J\circ f^{n}_{w_J}(w_T)) = \\
&(w_J\circ f^{n+1}_{w_J}(w_T)) = \\
&f^{n+1}(w_J\circ w_T) =\\
&f^{n+1}(w_0) =\\
&f^{n+l+1}(w).
\end{align*}
\end{proof}

\begin{cor}
Let $S$ be a finite aperiodic semigroup.Then $\exists n \in
\mathbb N$ such that 
$f^n=f^{n+1}$ for all $f \in Cayley(S)$.
\end{cor}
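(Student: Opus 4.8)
The plan is to run the very same induction over $\mathcal J$ classes that established finiteness, now carrying aperiodicity through each stage by means of the preceding lemma. First I would reduce to the case where $S$ has an identity and a zero: since $Cayley(S) \prec Cayley(S^1)$ and $Cayley(S) \prec Cayley(S^0)$ and a divisor of a finite aperiodic semigroup is again aperiodic, it suffices to establish the bound $f^n = f^{n+1}$ when $S = (S^1)^0$, which is still aperiodic. Under this standing assumption $S = S^1$, so $Cayley(S)$ already acts on $S^*$, and $S$ is its own top ideal; hence $Cayley(S,S) = Cayley(S)$, which is the object the induction must reach at its final stage.

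Next I would choose an ordering $\{J_i\}_{i=1}^{m}$ of the $\mathcal J$ classes so that $T_k := \bigcup_{i=1}^{k} J_i$ is an ideal for every $k$, exactly as in the finiteness proposition, with $J_1 = \{0\}$ and $T_2 = J_1 \cup J_2$ a $0$-minimal ideal. The base case is immediate from Theorem \ref{0-simple is finite}: $Cayley(S,T_2)$ is finite and aperiodic, so there is an $n_2$ with $f^{n_2} = f^{n_2+1}$ for all $f$ in it.

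For the inductive step, suppose $Cayley(S,T_k)$ is aperiodic and let $J = J_{k+1}$, which is directly above $T_k$ since $T_k \cup J = T_{k+1}$ is an ideal. The key auxiliary fact is that $Cayley(S,J^{tr})$ is aperiodic: by Theorem \ref{0-minimal is like J class} it is isomorphic to $Cayley(S/\sim_{T'},(J\cup T')/\sim_{T'})$ with $T' = S^1JS^1 \setminus J$, and this is the Cayley semigroup of a finite aperiodic quotient restricted to a $0$-minimal ideal, hence finite and aperiodic by Theorem \ref{0-simple is finite} (using that quotients of finite aperiodic semigroups remain aperiodic). With aperiodicity bounds now in hand for both $Cayley(S,T_k)$ and $Cayley(S,J^{tr})$, the preceding lemma produces a single bound $n_{k+1}$ with $f^{n_{k+1}} = f^{n_{k+1}+1}$ for all $f \in Cayley(S,T_k \cup J) = Cayley(S,T_{k+1})$.

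Iterating until $T_m = S$ yields a bound $n := n_m$ valid on $Cayley(S,S) = Cayley(S)$, which is the desired statement. I do not expect a genuine obstacle: all the analytic content lives in the preceding lemma (the prefix/suffix splitting $w = w_J \circ w_T$ together with the fixed-point identity $f^{tr}(w_J) = w_J$) and in Theorems \ref{0-simple is finite} and \ref{0-minimal is like J class}, so what remains is the bookkeeping of assembling the tower and confirming that its bottom ($T_2$) and its top ($T_m = S$) are correctly identified. The one point deserving care is the initial reduction, namely verifying that passing to $(S^1)^0$ preserves aperiodicity and that the induced identity lets us identify $Cayley(S,S)$ with $Cayley(S)$.
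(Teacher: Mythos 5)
Your proposal is correct and follows essentially the same route as the paper: the same ordering of $\mathcal J$ classes into an ideal chain, the base case from Theorem \ref{0-simple is finite} on the $0$-minimal ideal, and the inductive step via the preceding lemma. You additionally spell out two details the paper leaves implicit --- the aperiodicity of $Cayley(S,J^{tr})$ via Theorem \ref{0-minimal is like J class}, and the reduction to semigroups with identity and zero using $Cayley(S)\prec Cayley(S^1)$, $Cayley(S)\prec Cayley(S^0)$ and the preservation of the identity $x^n=x^{n+1}$ under division --- both of which are sound.
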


\begin{proof}
As before, let $\{J_i\}_{i=1}^{k}$ be an enumeration of the
$\mathcal J$ classes of $S$, such that $\cup_{i=1}^kJ_i$ is
an ideal of $S$ for all $1 \leq i \leq k$. We saw earlier that for
some $N \in \mathbb N$, we have $f^N=f^{N+1}$ for every $f \in
Cayley(S,J_1)$. Continuing by induction on the number of $\mathcal
J$ classes in $S$, and by applying the previous lemma, the result
follows.
\end{proof}

\section{Summary}

The results of this paper may be summed up as follows.
\begin{prop}
Let $S$ be a finite semigroup. The following are equivalent.
\begin{enumerate}
\item
$S$ is an aperiodic semigroup.
\item
$Cayley(S)$ is finite.
\item
$Cayley(S)$ is aperiodic.
\end{enumerate}
\end{prop}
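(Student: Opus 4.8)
The plan is to assemble the equivalence from the two implications already established in the paper, namely $(1)\Rightarrow(2)$ (the finiteness result: if $S$ is aperiodic then $Cayley(S)$ is finite) and $(1)\Rightarrow(3)$ (the aperiodicity corollary: if $S$ is aperiodic there is an $n$ with $f^n=f^{n+1}$ for all $f\in Cayley(S)$). It then remains to prove the two converses $(2)\Rightarrow(1)$ and $(3)\Rightarrow(1)$; together with the two implications above, these yield $(1)\Leftrightarrow(2)$ and $(1)\Leftrightarrow(3)$, and hence the full equivalence of all three conditions.

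For $(3)\Rightarrow(1)$ I would argue directly from Rhodes's theorem quoted in the introduction, which states that a finite semigroup $S$ is aperiodic if and only if for every $s\in S$ there is an $n$ with $\varphi_s^n=\varphi_s^{n+1}$. If $Cayley(S)$ is aperiodic, witnessed by a single $n$ with $f^n=f^{n+1}$ for all $f\in Cayley(S)$, then in particular each generator $\varphi_s$ satisfies $\varphi_s^n=\varphi_s^{n+1}$, so the hypothesis of Rhodes's criterion holds and $S$ is aperiodic. Here the word \emph{aperiodic} in $(3)$ must be read as this uniform condition, matching the corollary, rather than as the bare ``no nontrivial subgroups'' definition; under the latter the infinite free semigroup $Cayley(\mathbb Z_2)\cong\{a,b\}^+$ would count as aperiodic and the equivalence $(1)\Leftrightarrow(3)$ would fail.

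For $(2)\Rightarrow(1)$ I would argue by contrapositive. Suppose $S$ is not aperiodic, so $S$ contains a nontrivial subgroup; such a subgroup contains an element of prime order, hence a copy of $\mathbb Z_p$, and therefore $\mathbb Z_p\prec S$. By the corollary that $Cayley$ respects division we get $Cayley(\mathbb Z_p)\prec Cayley(S)$. Now $Cayley(\mathbb Z_p)$ is infinite: this is the content of \cite{benpedro} for arbitrary nontrivial abelian groups, and was recorded above for $p=2$ in the identification $Cayley(\mathbb Z_2)\cong\{a,b\}^+$. Since a quotient of a subsemigroup of a finite semigroup is finite, a finite semigroup can have only finite divisors; as $Cayley(S)$ has the infinite divisor $Cayley(\mathbb Z_p)$ it must itself be infinite, so $(2)$ fails.

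The main obstacle is precisely the direction $(2)\Rightarrow(1)$. Unlike $(3)\Rightarrow(1)$, mere finiteness of $Cayley(S)$ gives no local relation on the generators $\varphi_s$ — a finite monogenic semigroup need not satisfy $\varphi_s^n=\varphi_s^{n+1}$ — so Rhodes's criterion cannot be applied directly. One is forced to exhibit an infinite divisor of $Cayley(S)$, and this rests on the external fact that the Cayley semigroup of a nontrivial finite (abelian) group is infinite, which I would take from \cite{benpedro}. Everything else is bookkeeping around the already-proven implications and the functoriality of $Cayley$ with respect to $\prec$.
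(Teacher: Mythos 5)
Your proof is correct, and it partly coincides with and partly improves on the paper's own argument. For $(2)\Rightarrow(1)$ you do essentially what the paper does: the paper takes the nontrivial subgroup $G\leq S$ directly, invokes Silva--Steinberg \cite{benpedro} to assert that $Cayley(G)$ is free on $|G|$ generators, and concludes via $Cayley(G)\prec Cayley(S)$ that both $(2)$ and $(3)$ fail; your only variation is descending to a prime-order subgroup $\mathbb Z_p$ first, which is actually the more careful move, since the introduction only claims the freeness result of \cite{benpedro} in full for abelian groups. Where you genuinely diverge is $(3)\Rightarrow(1)$: the paper derives it from the same free divisor $Cayley(G)$, whereas you argue directly from Rhodes's criterion quoted in the introduction (from \cite{arbib}), that a finite $S$ is aperiodic if and only if each generator $\varphi_s$ satisfies $\varphi_s^n=\varphi_s^{n+1}$ for some $n$; this makes the $(3)$ direction independent of the freeness theorem. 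Your remark on the reading of \emph{aperiodic} in $(3)$ is also a real point the paper glosses over: a free semigroup has no idempotents and hence no subgroups at all, so under the bare ``no nontrivial subgroups'' definition the free divisor argument does not refute $(3)$; one must read $(3)$ as the torsion condition $f^n=f^{n+1}$ (as in the paper's aperiodicity corollary), and both your route and a charitable reading of the paper's route depend on this. Finally, your observation that finiteness of $Cayley(S)$ alone yields no relation of the form $\varphi_s^n=\varphi_s^{n+1}$ (a finite cyclic subsemigroup may have nontrivial period) correctly explains why $(2)\Rightarrow(1)$ cannot be obtained from Rhodes's criterion and genuinely needs the infinite divisor.
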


\begin{proof}
That $(1)$ implies $(2)$ and $(1)$ implies $(3)$, are the main
results of this paper. Suppose that $(1)$ does not hold. Then $S$
contains a non trivial group $G$. By results of Silva and Steinberg
\cite{benpedro}, $Cayley(G)$ is a free semigroup on $|G|$
generators. Since $Cayley(G) \prec Cayley(S)$ we see that if $(1)$
is not true, then $(2)$ and $(3)$ are not true.
\end{proof}

\section{Acknowledgments}
The author would like to thank the anonymous referee for his careful reading and helpful comments, and the Bar-Ilan President's Fellowship for funding this research.

\end{document}